\documentclass[reqno,12pt]{amsart}

\usepackage{tikz}

\usetikzlibrary{babel}
\usetikzlibrary{calc}
\usepackage{tkz-euclide}
\usepackage{amsthm}
\usepackage{amsmath}
\usepackage{amssymb}
\usepackage{latexsym}
\usepackage{graphicx}
\usepackage{url}
\usepackage{pdfpages}
\usepackage{ulem} 
\usepackage{enumitem}
\usepackage{xcolor}
\usepackage{mathtools}
\mathtoolsset{showonlyrefs}

\usepackage{ulem}
\usepackage[colorlinks, linkcolor=blue, filecolor=blue, citecolor=olive, urlcolor=purple]{hyperref}

\usepackage[usenames,dvipsnames]{pstricks}
\usepackage{pst-grad}
\usepackage{pst-plot}

\def\blue{\color{blue}}

\definecolor{violet(ryb)}{rgb}{0.53, 0.0, 0.69}
\definecolor{internationalorange}{rgb}{1.0, 0.31, 0.0}

\newtheorem{theorem}{Theorem}[section]
\newtheorem{lemma}[theorem]{Lemma} 
\newtheorem{definition}[theorem]{Definition}
\newtheorem{proposition}[theorem]{Proposition}

\newtheorem{remark}[theorem]{Remark}

\newtheorem*{theorem*}{\it Theorem} 

\def\a{{\bf a}}

\def\R{\mathbb R} 
\def\N{\mathbb N} 

\newcommand{\norm}[1]{\left\Vert #1 \right\Vert} 

\def\vint_#1{\mathchoice%
          {\mathop{\kern 0.2em\vrule width 0.6em height 0.69678ex depth -0.58065ex
                  \kern -0.8em \intop}\nolimits_{\kern -0.4em#1}}%
          {\mathop{\kern 0.1em\vrule width 0.5em height 0.69678ex depth -0.60387ex
                  \kern -0.6em \intop}\nolimits_{#1}}%
          {\mathop{\kern 0.1em\vrule width 0.5em height 0.69678ex
              depth -0.60387ex
                  \kern -0.6em \intop}\nolimits_{#1}}%
          {\mathop{\kern 0.1em\vrule width 0.5em height 0.69678ex depth -0.60387ex
                  \kern -0.6em \intop}\nolimits_{#1}}}
\def\vintslides_#1{\mathchoice%
          {\mathop{\kern 0.1em\vrule width 0.5em height 0.697ex depth -0.581ex
                  \kern -0.6em \intop}\nolimits_{\kern -0.4em#1}}%
          {\mathop{\kern 0.1em\vrule width 0.3em height 0.697ex depth -0.604ex
                  \kern -0.4em \intop}\nolimits_{#1}}%
          {\mathop{\kern 0.1em\vrule width 0.3em height 0.697ex depth -0.604ex
                  \kern -0.4em \intop}\nolimits_{#1}}%
          {\mathop{\kern 0.1em\vrule width 0.3em height 0.697ex depth -0.604ex
                  \kern -0.4em \intop}\nolimits_{#1}}}

\renewcommand{\d}{\,\mathrm{d}} 
\def\1{\raisebox{2pt}{\rm{$\chi$}}} 

\def\e{\mathsf{e}} 	
\def\v{\mathsf{v}}	
\newcommand{\vi}{\mathsf{i}} 
\newcommand{\vf}{\mathsf{f}} 

\newcommand{\G}{\mathsf{G}}	

\newcommand{\V}{\mathsf{V}}	
\newcommand{\E}{\mathsf{E}}	

\numberwithin{equation}{section} 

\usepackage{mathtools}
\mathtoolsset{showonlyrefs}

\begin{document}

\title[Nonlinear diffusion on metric graphs]{\bf Doubly Nonlinear Diffusion  Equations  on Metric Graphs}

\author[J. M. Maz\'on and J. Toledo ]{José M. Maz\'on and Julián Toledo}

 \address{ J. M. Maz\'{o}n: Departamento de An\'{a}lisis Matem\'{a}tico,
Univ. Valencia, Av. Vicent Andrés Estellés,  19, 46100 Burjassot, Spain.
 {\tt mazon@uv.es}}
\address{J. Toledo: Departamento de An\'{a}lisis Matem\'{a}tico,
Univ. València, Av. Vicent Andrés Estellés,  19, 46100 Burjassot, Spain.
 {\tt toledojj@uv.es}}

\subjclass[2020]{35K65, 47H20, 47J35, 81Q35}

\keywords{Quantum graphs, Porous medium equations, Kirchhoff conditions, Accretive operators}

\thanks{
}

\setcounter{tocdepth}{1}

\date{\bf\blue\today}

\begin{abstract}
In this paper we study existence and uniqueness of solutions for  a very general class of doubly nonlinear diffusion equations  on metric graphs,  which provide the appropriate mathematical framework to describe complex tubular networks in which axial diffusion is the main focus.  Some important particular cases  covered in our study are the Porous Medium Equation and the evolution equation for the $p$-Laplacian,  but we also consider the case in that diffusion changes from one edge to another,  which takes into account the influence of the properties of the tubules forming the network on axial diffusion. Furthermore, the problem is studied under  non-homogeneous  Neumann-Kirchhoff conditions on the vertices of the graph.

\end{abstract}

\maketitle


\section{Introduction}

Many physical, biological and engineering  dynamical systems are posed on  thin  elongated structures arranged in complex networks. Examples include quantum wires and nanostructures in mesoscopic physics, waveguides and optical fibers, vascular and renal tubular networks, as well as microfluidic devices.  A common feature of such systems is that their transversal dimensions are much smaller than their longitudinal extent,   while the underlying connectivity and topology play a dominant role in the observed dynamics.

From a mathematical perspective, these  dynamical systems are naturally modeled as partial differential equations posed on thin tubular domains in $\mathbb{R}^n$.   Typical examples involve diffusion, wave propagation, or Schrödinger-type equations.   However, direct analysis of such problems in their full dimensional setting is often analytically intractable and computationally expensive. This motivates the search for effective reduced models that retain the essential physical and geometric features while significantly simplifying the underlying equations.

Quantum graphs arise precisely as such effective models. They consist of metric graphs (graphs whose edges are identified with finite or infinite intervals) equipped with differential operators acting along the edges and suitable coupling conditions at the vertices. The term {\it quantum} originates from early applications to quantum mechanics, where Schrödinger operators on networks of thin wires were studied. Nevertheless, the framework is by no means restricted to quantum systems and has since found applications in diffusion processes, wave propagation, elasticity, and biological transport phenomena.  Indeed, the literature on quantum graphs is vast and extensive, and there is no chance to give even a brief overview of the subject here.
We only mention a few recent monographs and collected works with a comprehensive bibliography \cite{BerCarFul06}, \cite{BerKuc13}, \cite{ExnKeaKuc08}, \cite{KosSch06}, \cite{Mug14} and \cite{Pos09b}.  See also~\cite{Scotel} and the references therein.

In the Euclidean space $\R^N$, the doubly nonlinear diffusion equation for the $p$-Laplacian,
\begin{equation}\label{DNL1}
\frac{\partial v}{ \partial t} -\Delta_p u=f, \quad v \in \gamma(u),
\end{equation}  with boundary conditions and initial data,
where $\gamma$ is a maximal monotone graph in $\R \times \R$, $f(t,x)$ is a source, and
$$\Delta_p u:= {\rm div}( \vert \nabla u \vert^{p-2} \nabla u), \quad 1 < p < \infty,$$ appears in different fields, such as flow in porous media, plasma physics or  image processing. This type of equations was proposed by Leibenzon in modeling
turbulent fluid problems in 1947 (\cite{L}), and  has been investigated mathematically by different authors,  we should mention the earlier work  \cite{Ba}  and the Kalashnikov’s well-known survey paper \cite{K}.   The prototype of equation \eqref{DNL1} is given by
\begin{equation}\label{DNLProt}
\frac{\partial u^m}{ \partial t} -\Delta_p u=f
\end{equation}
(read $u^m=|u|^{m-1}u$).  A quite exhaustive study of this type of problems is done in~\cite{AndIgMazTo06} and \cite{AndIgMazTo07} (see also \cite{BDMS18}).
For $m=1$ and $p =2$, equation  \eqref{DNLProt} reduces to
the standard heat equation. For $m = 1$ and $p \in  (1, \infty)$ the equation is known as the $p$-Laplacian   evolution  equation, while for $m \in (0, \infty)$ and $p=2$   it is  the porous medium equation  (see the monograph~\cite{V}).   
The case $p - 1 > m$ is
commonly known as a {\it slow diffusion equation}, while the case $p -1 > m$ is named the {\it fast diffusion equation}. 
The case $p=1$ has been recently study in \cite{MMT23}.

 Our aim in this article is to study some doubly nonlinear diffusion problems in metric graphs, which constitute the appropriate mathematical spaces for describing complex tubular networks where axial diffusion is the issue of interest. The axial diffusion process will undoubtedly depend on different factors of the tubules that make up the structure, leading to different possible operators to drive diffusion in each of them. We will assume that the diffusion equations are of the type illustrated in~\eqref{DNLProt}, but changing from one edge to another to account for the above observation. We will also address zero or non-zero flow conditions at the vertices and allow a source around the edges.
Concretely the problem we want to study is 
\begin{equation}\label{evpbintro1711} \left\{\begin{array}{l}
\frac{\partial [v]_\e}{\partial t}-\Delta_{p_\e} [u]_\e= [f]_\e\quad\hbox{in } (0,+\infty)\times(0,\ell_\e),\ \forall\, \e\in \E(\G),\\
\\ {}
 [v]_\e=\gamma_\e([u]_\e)\quad\hbox{ in } (0,\infty)\times (0,\ell_\e),\ \forall\, \e\in \E(\G),
\\ \\
\hbox{ $u$ continuous,}
\\ \\
\displaystyle  \partial^{\overline{p}}_\nu u(t,\v)={\omega(t,\v)}   \quad\hbox{for all } (t,\v)\in (0,+\infty)\times \V(\G),
\\ \\ v(0)=v_0.
\end{array}\right.
\end{equation}
where $\G$ is a metric graph with vertices $\V(\G)$  and edges $\E(\G)$ (see Figure~\ref{fig01}), $(0,\ell_\e)$ is the interval where the edge $\e$ is parametrized (see Subsection~\ref{sec1747}), $-\Delta_{p_\e}$ is the $p_\e$-Laplacian operator (not necessarily the same in each edge),  
 $\gamma_\e:\mathbb{R}\to \mathbb{R}$ is a continuous   increasing  function with $\gamma_\e(\mathbb{R})=\mathbb{R}$ and $\gamma_\e(0)=0$ (not necessarily the same in each edge),  $[f]_\e(t,x)$ is a source term, and $$\partial^{\overline{p}}_\nu u(t,\v)=\omega(t,\v)(=\omega_\v(t))$$   determines the flux trough a vertex $\v$  (see~\eqref{GREENF0op} later on).  
\begin{figure}[ht]
    \centering
    \begin{tikzpicture}[
  grow=right, 
  level 1/.style={sibling distance=4cm, level distance=4cm},
  level 2/.style={sibling distance=2cm, level distance=3cm},
  level 3/.style={sibling distance=1cm, level distance=2cm},
  vertex/.style={draw, circle, minimum size=7mm, inner sep=1pt},
  edge from parent/.style={draw},
  every edge node/.style={draw=none, fill=none, font=\scriptsize}
]

\node[vertex] (v1) {$\v_1$}
  child {node[vertex] (v2) {$\v_2$}
    child {node[vertex] (v4) {$\v_4$}
      child {node[vertex] (v8)  {$\v_8$}}
      child {node[vertex] (v9)  {$\v_9$}}
    }
    child {node[vertex] (v5) {$\v_5$}
      child {node[vertex] (v10) {$\v_{10}$}}
      child {node[vertex] (v11) {$\v_{11}$}}
    }
  }
  child {node[vertex] (v3) {$\v_3$}
    child {node[vertex] (v6) {$\v_6$}
      child {node[vertex] (v12) {$\v_{12}$}}
      child {node[vertex] (v13) {$\v_{13}$}}
    }
    child {node[vertex] (v7) {$\v_7$}
      child {node[vertex] (v14) {$\v_{14}$}}
      child {node[vertex] (v15) {$\v_{15}$}}
    }
  };

\path
  (v1) -- (v2) node[midway, below]  {$\e_1$}
  (v1) -- (v3) node[midway, above]  {$\e_2$}
  (v2) -- (v4) node[midway, below]  {$\e_3$}
  (v2) -- (v5) node[midway, above]  {$\e_4$}
  (v3) -- (v6) node[midway, below]  {$\e_5$}
  (v3) -- (v7) node[midway, above]  {$\e_6$}
  (v4) -- (v8) node[midway, below]  {$\e_7$}
  (v4) -- (v9) node[midway, above]  {$\e_8$}
  (v5) -- (v10) node[midway, below] {$\e_9$}
  (v5) -- (v11) node[midway, above] {$\e_{10}$}
  (v6) -- (v12) node[midway, below] {$\e_{11}$}
  (v6) -- (v13) node[midway, above] {$\e_{12}$}
  (v7) -- (v14) node[midway, below] {$\e_{13}$}
  (v7) -- (v15) node[midway, above] {$\e_{14}$};

\foreach \i in {1,...,15}{
  \draw[dashed, -{Latex[length=2mm]},lightgray] (v\i.east) -- ++(0.6,0)
      node[anchor=west, font=\scriptsize] {\color{gray} $\omega_{\v_{\i}}$};
}
\end{tikzpicture}
    \caption{\small The dashed gray arrows indicate the flux $\omega_{\v_i}$ on each vertex.}
    \label{fig01}
\end{figure}
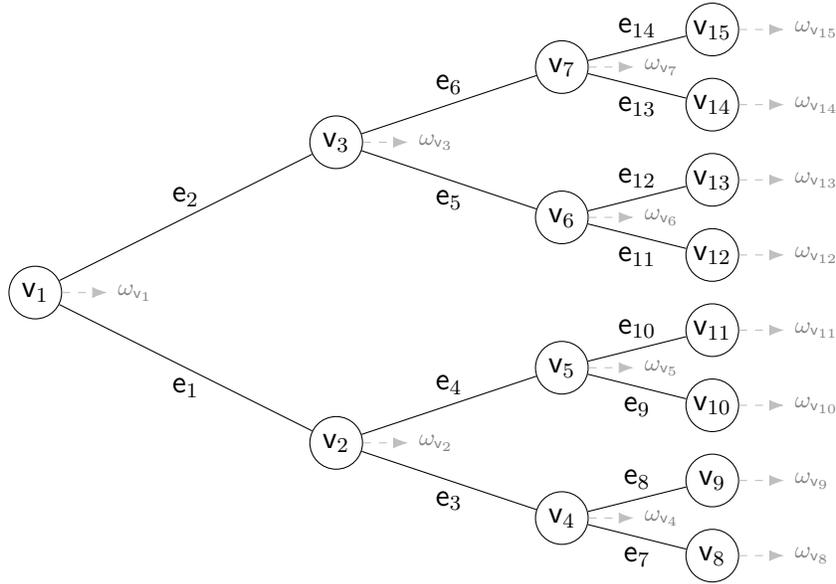

 The pair of conditions 
$$\begin{array}{c}
\hbox{$u$ continuous, and }\\[8pt]  
\partial^{\overline{p}}_\nu u(t,\v)=0,
\end{array}$$ 
is known as the {\it standard Neumann–Kirchhoff condition},  commonly imposed in the literature, among other reasons for its compatibility with operator-theoretic methods. We consider a generalization in which the flux at the vertices may be nonzero and time-dependent.

An important particular case of Problem \eqref{evpbintro1711} is when $p_\e = p$ and $\gamma_\e = \gamma$ for all $\e\in \E(\G)$,    that is, the problem
\begin{equation}\label{evproblem} \left\{\begin{array}{l}
\frac{\partial (\gamma([u]_\e)}{\partial t}-\Delta_{p} [u]_\e=[f]_\e\quad\hbox{in } (0,+\infty)\times(0,\ell_\e),\ \forall\, \e\in \E(\G),\\
\\
\displaystyle  \partial^{p}_\nu u(t,\v)={\omega_\v}   \quad\hbox{for all } (t,\v)\in (0,+\infty)\times \V(\G),
\\ \\ v(0)=v_0,
\end{array}\right.
\end{equation}
In this case, if  $\gamma(r) = r$ for all $r \in \R$, we have a  {\it pure  $p$-Laplacian evolution problem}  in a quantum graph;  and for $p=2$ and  $\gamma(r) = |r|^{m-1}r$, with $m>0$,  we have a {\it pure  porous medium} or {\it fast diffusion equation} quantum graph problem.  
Let us notice that, to our knowledge,   for most of these particular cases, the existence and uniqueness of solutions to  problem~\eqref{evpbintro1711} has not been studied  in the literature.

 To solve the  evolution problem we will handle with three main problems. We  will first solve an elliptic problem associated to the evolution one, which is certainly interesting in itself. We will then prove that we can get a mild solution for the evolution problem using   nonlinear semigroup theory. Finally, we will characterize such solution as the unique weak solution of the evolution problem.
We want to highlight that a principal aspect in this paper is that the proof of the existence of solution   for the associated elliptic problem is done by induction on the number of edges of the graph. It is also worthy to mention  that the main difficulty is to get the Kirchhoff type condition, that is, fixed the flux on a vertex (no matter if it is null or not), to get the continuity   of the temperature, or concentration,~$u$. 
The fact of dealing with  different diffusion rules  on  each edge obeys to describe more realistic practical situations.

\section{Preliminaries}

 \subsection{Metric Graphs}\label{sec1747}

We  recall here some basic knowledge about metric graphs,
see for instance
\cite{BerKuc13} and  the references therein.

A graph $\G$ consists of a finite or countable infinite set
of vertices $\V(\G)=\{\v_i\}$ and a set of edges $\E(\G)=\{\e_j\}$ connecting the
vertices.  A graph $\G$ is said to be a finite graph if the number of edges
and the number of vertices are finite.
 An edge and a vertex on
that edge are called incident. We will denote $\v\in \e$ when the edge $\e$ and the vertex $\v$ are incident.

We define $\E_{\v}(\G)$ as the set of all edges incident to the vertex $\v$, and the {\it degree} of $\v$ as $d_\v:=  \sharp \E_{\v}(\G)$ (the cardinal of $\E_\v(\G)$). We define the {\it boundary} of $\V(\G)$ as
$$\partial \V(\G):= \{ \v \in \V(\G)   :   d_\v =1 \},$$
 and its {\it interior} as
 $$\operatorname{int}(\V(\G)) := \{ \v \in \V(\G)   :   d_\v > 1 \}.$$

  Following~\cite{BerKuc13} we say that a  graph $\G$ is a metric graph if
\begin{enumerate}
	\item each edge $\e$ is assigned  with a positive length $\ell_{\e}\in(0,+\infty];$

\item   for each edge $\e$, a coordinate is assigned to each point of it, including its vertices. For that purpose, each edge $\e$ is  identified with an ordered pair
$(\vi_{\e},\vf_{\e})$ of vertices, being $\vi_{\e}$ and $\vf_{\e}$ the
initial and terminal vertex of $\e$ respectively
and an increasing coordinate in the edge is defined in the direction of the edge:  
	$$
	\begin{array}{rlcc}
 c_\e:&\e&\to& [0,\ell_\e]\\
 &x&\rightsquigarrow& c_\e(x)
 \end{array}
$$
such that $c_\e(\vi_\e)=0$, $c_\e(\vf_\e)=\ell_{\e}$, and it is exhaustive.  This is not restrictive under our standing assumption that there are no multiple edges between any two vertices. Also, an edge $\e$ can be reparametized with the same length in such a way that the initial and final vertices are interchanged by using the parametrization $\tilde c_\e(x)=c_\e(\ell_\e-x)$.
\end{enumerate}

  A graph is said to be
{\it connected} if a path exists between every pair of vertices.

 A {\it compact} metric graph is a finite metric graph whose edges  all have finite
length.

We will deal with   connected and compact metric graphs.  To simplify we also assume that there are no loops (that is there are no edges connecting one vertex), and that there are no two different edges connecting the same vertices.

A function $u$ on a metric graph $\G$ is a collection of functions $[u]_{\e}$
defined on
$(0,\ell_{\e})$  for all $\e\in \E(\G),$
not just at the vertices as in discrete models.

We use the notation (we use the same name for variables in $\G$ and in $(0,\ell_\e)$ since there is not ambiguity):
$$ \int_{\G} u
=
\int_{\G} u(x) \d x
:=
\sum_{\e\in \E(\G)} \int_{0}^{\ell_{\e}} [u]_{\e}(x)\d x=\sum_{\e\in \E(\G)} \int_{0}^{\ell_{\e}} [u]_{\e}.
$$

Let, for each $\e\in \E(\G)$,  $p_\e\in
[1,+\infty]$.   We write $\overline{p}$ to represent  the collection of $(p_\e)_{\e \in \E(\G)}$. We denote by $L^{\overline{p}}(\G)$ the space of all function in $\G$ such that $[u]_\e \in L^{p_\e}(0,\ell_{\e})$  for all $\e\in \E(\G)$, with the norm
$$\Vert u \Vert_{\overline{p}} = \sum_{\e\in \E(\G)} \Vert [u]_\e \Vert_{L^{p_\e}(0,\ell_{\e})}.$$
 If $p_\e=p$ for all $\e$, we will also write $L^{\overline p}(\G)$ as $L^p(\G)$. 

  We define  $L^{1}(0, T; L^{\overline{p}}(\G))$ as the Banach space of  measurable functions $f : (0,T) \rightarrow  L^{\overline{p}}(\G)$ such that
$$\int_0^T \norm{f(t)}_{L^{\overline{p}}(\G)} \, dt < \infty.$$

Consider
$\widetilde{\mathcal{W}}^{1,\overline{p}}(\G)$ the space of functions in $\G$ such that
$[u]_{\e}\in W^{1,p_\e}(0,\ell_{\e})$
for all $\e\in \E(\G)$, with the Sobolev norm
$$
\norm{u}_{\widetilde{\mathcal{W}}^{1,\overline{p}}(\G)}
:=
\sum_{\e\in \E(\G)} \left(
\norm{[u]_{\e}}_{L^{p_\e}(0,\ell_{\e})}^{p_\e}+
\norm{[u]_{\e}^\prime}_{L^{p_\e}(0,\ell_{\e})}^{p_\e}\right)^{1/p_\e}.
$$
If $u\in\widetilde W^{1,\overline{p}}(\G)$ we denote by $u^\prime$ the function in $L^{\overline{p}}(\G)$ with $[u^\prime]_\e=[u]_\e^\prime$ for all $\e\in\E(\G)$.
The space $\widetilde W^{1,\overline{p}}(\G)$ is a reflexive Banach space when $1<p_\e<+\infty$ for all edge $\e$.

We denote by $C(\V(\G))$ the set of all functions $u$ in~$\G$   such that each $[u]_\e$ is continuous at $[0,\ell_\e]$ and 
$$[u]_{\e_1}(\v)  = [u]_{\e_2}(\v) \quad \hbox{for all} \ \e_1, \e_2 \in \E_\v(\G).$$
We denote this common value at $\v$ as $u(\v)$.

Observe that in the definition of $\widetilde W^{1,\overline{p}}(\G)$ we do not assume the continuity at the vertices.
Set $$\mathcal{W}^{1,\overline{p}}(\G)= \widetilde{\mathcal{W}}^{1,\overline{p}}(\G)\cap C(\V(\G)),$$
equipped with the above Sobolev norm.

 When $p_\e = p$ for all  $\e\in \E(\G)$, we  write $$\widetilde{\mathcal{W}}^{1,\overline{p}}(\G)= \widetilde{W}^{1,p}(\G)$$
and $$\mathcal{W}^{1,\overline{p}}(\G)= W^{1,p}(\G).$$

We will also consider the following space of functions in $\G$:
$$\mathcal{D}(\G):= \{ \varphi   :   [\varphi]_\e \in \mathcal{D}(]0, \ell_\e[) \ \ \forall \, \e \in E(\G) \}.$$

We  define  $ L^{1}(0, T; \mathcal{W}^{1,\overline{p}}(\G))$ as the Banach spaces of  measurable functions $f : (0,T) \rightarrow  \mathcal{W}^{1,\overline{p}}(\G)$ such that
$$\int_0^T \norm{f(t)}_{\widetilde{\mathcal{W}}^{1,\overline{p}}(\G)} \, dt < \infty.$$

 If $I=(a,b)$ is a bounded interval in $\R$, by \cite[Theorem 8.8]{Bre10}, we have that
\begin{equation}\label{RC_001}
\hbox{the injection} \ W^{1,p}(I) \subset C(\overline{I}) \ \hbox{is compact for all} \ 1 < p \leq \infty.
\end{equation}
Consequently we have the following result.

\begin{theorem}\label{compact} 
Let $\G$  be a compact metric graph and $1 < p_\e <  \infty$ for all $\e\in E(\G)$. The injection $ W^{1,\overline{p}}(\G) \subset C(\G)$
is compact.  
\end{theorem}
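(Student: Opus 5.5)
The plan is to reduce the statement to the one-dimensional compact embedding \eqref{RC_001}, applied edge by edge, and then use that $\G$ has finitely many edges. First I fix what $C(\G)$ means: the space of functions $u$ on $\G$ with each $[u]_\e$ continuous on $[0,\ell_\e]$ and agreeing at shared vertices, i.e.\ continuous functions on the compact metric space $\G$, with the norm $\Vert u\Vert_{C(\G)}=\max_{\e\in\E(\G)}\Vert [u]_\e\Vert_{C([0,\ell_\e])}$. This norm is equivalent to the sup norm over $\G$ and to the sum of the edgewise sup norms, since there are finitely many edges. Each $u\in\mathcal{W}^{1,\overline{p}}(\G)$ has $[u]_\e\in W^{1,p_\e}(0,\ell_\e)$, which has a continuous representative on $[0,\ell_\e]$. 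By the definition of $\mathcal{W}^{1,\overline{p}}(\G)=\widetilde{\mathcal W}^{1,\overline{p}}(\G)\cap C(\V(\G))$, these representatives match at the vertices. So the injection is well defined.

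For continuity of the injection, the one-dimensional estimate $\Vert w\Vert_{C([0,\ell_\e])}\le C_\e\Vert w\Vert_{W^{1,p_\e}(0,\ell_\e)}$ gives $\Vert u\Vert_{C(\G)}\le (\max_\e C_\e)\,\Vert u\Vert_{\widetilde{\mathcal W}^{1,\overline p}(\G)}$. The sum defining the Sobolev norm dominates each edge term, because each term is exactly $\Vert [u]_\e\Vert_{W^{1,p_\e}}$. So the embedding is bounded.

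For compactness, take a bounded sequence $(u_n)$ in $\mathcal{W}^{1,\overline{p}}(\G)$. For each edge, $([u_n]_\e)_n$ is bounded in $W^{1,p_\e}(0,\ell_\e)$ with $1<p_\e<\infty$. By \eqref{RC_001} it has a subsequence converging uniformly on $[0,\ell_\e]$. Enumerate the finitely many edges $\e_1,\dots,\e_k$ and extract successive subsequences, one for each edge. The final subsequence $(u_{n_j})$ then has $[u_{n_j}]_{\e}\to g_\e$ uniformly on $[0,\ell_\e]$ for every $\e$, with $g_\e\in C([0,\ell_\e])$.

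The only point needing care, and the main obstacle such as it is, is that the limit $g$ with $[g]_\e=g_\e$ belongs to $C(\G)$, i.e.\ satisfies the vertex compatibility. Uniform convergence includes the endpoint values. For $\e_1,\e_2\in\E_\v(\G)$ we have $[u_{n_j}]_{\e_1}(\v)=[u_{n_j}]_{\e_2}(\v)$ for every $j$, and passing to the limit gives $g_{\e_1}(\v)=g_{\e_2}(\v)$. Each evaluation is taken at the endpoint $0$ or $\ell_\e$ according to the orientation of the edge. Hence $g\in C(\G)$ and $u_{n_j}\to g$ in $C(\G)$, which proves that the injection is compact. Finiteness of $\E(\G)$, guaranteed by compactness of $\G$, is used both for the diagonal extraction and for the equivalence of norms.
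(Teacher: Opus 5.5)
Your proof is correct and takes the same route as the paper. The paper simply states the result as a consequence of the one-dimensional compact embedding \eqref{RC_001} applied on each of the finitely many edges. You supply the details it leaves implicit: boundedness of the injection, successive extraction of subsequences over the finitely many edges, and passage to the limit in the vertex matching conditions, which keeps the limit in $C(\G)$.
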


Moreover, we have the following Poincaré inequality.

\begin{theorem}\label{teopoincare001}
Let $\G$  be a compact metric graph and $1 < p_\e <  \infty$ for all $\e \in E(\G)$. There exists a constant $\lambda_{\G} >0$ such that 
\begin{equation}\label{Poinc0}
\lambda_{\G}
\Vert u \Vert_{L^{\overline{p}}(\G)} \leq \Vert u' \Vert_{L^{\overline{p}}(\G)}   \quad \forall \, u \in W^{1,\overline{p}}(\G):\int_\G
u = 0.
\end{equation}
\end{theorem}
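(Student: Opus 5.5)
We argue by contradiction, using the compactness of Theorem~\ref{compact}. Suppose no such $\lambda_\G>0$ exists. Then there are $u_n\in W^{1,\overline{p}}(\G)$ with $\int_\G u_n=0$ and
$$\Vert u_n\Vert_{L^{\overline{p}}(\G)}=1,\qquad \Vert u_n'\Vert_{L^{\overline{p}}(\G)}<\tfrac1n .$$
The inequality is homogeneous of degree one in $u$, so this normalization is legitimate even though the exponents $p_\e$ vary from edge to edge.

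First, $(u_n)$ is bounded in $W^{1,\overline{p}}(\G)$. Indeed, on each edge $\e$,
$$\big(\Vert [u_n]_\e\Vert_{p_\e}^{p_\e}+\Vert [u_n]_\e'\Vert_{p_\e}^{p_\e}\big)^{1/p_\e}\le \Vert [u_n]_\e\Vert_{p_\e}+\Vert [u_n]_\e'\Vert_{p_\e},$$
and summing over the finitely many edges gives a bound by $1+1/n$. By Theorem~\ref{compact} (applied edgewise via \eqref{RC_001}), a subsequence converges uniformly on every closed edge $[0,\ell_\e]$ to some $u$. Uniform convergence preserves continuity at the vertices, so $u\in C(\V(\G))$. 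Since $\G$ is compact, it also gives convergence in $L^{\overline{p}}(\G)$. Hence $\Vert u\Vert_{L^{\overline{p}}(\G)}=1$ and $\int_\G u=0$.

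Next we identify $u$. On each edge, $[u_n]_\e'\to 0$ in $L^{p_\e}$ and $[u_n]_\e\to[u]_\e$ in $L^{p_\e}$. Passing to the limit in $\int [u_n]_\e\varphi'=-\int [u_n]_\e'\varphi$ for $\varphi\in\mathcal{D}(]0,\ell_\e[)$ shows that the distributional derivative of $[u]_\e$ vanishes. So $[u]_\e$ is constant, say equal to $c_\e$, on each edge.

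This is the only step that genuinely uses the graph structure. The continuity condition $u\in C(\V(\G))$ forces $c_{\e_1}=c_{\e_2}$ whenever $\e_1,\e_2$ share a vertex. Since $\G$ is connected, any two edges are joined by a chain of edges in which consecutive edges share a vertex, so $u\equiv c$ on all of $\G$. Then $\int_\G u=c\sum_\e\ell_\e=0$ forces $c=0$, which contradicts $\Vert u\Vert_{L^{\overline{p}}(\G)}=1$.

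The only point needing care is that vertex continuity is preserved in the limit. This is exactly why we use uniform convergence from Theorem~\ref{compact} rather than weak convergence in $\widetilde{\mathcal{W}}^{1,\overline{p}}(\G)$ alone. The assumption $p_\e>1$ is used only through \eqref{RC_001} and Theorem~\ref{compact}.
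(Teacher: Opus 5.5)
Your proof is correct and is essentially the paper's argument: normalize a contradicting sequence, use the compact embedding of Theorem~\ref{compact} to obtain a uniform limit, show the limit is constant on each edge and continuous at the vertices (hence globally constant), and reach a contradiction through the integral condition. The only differences are cosmetic. The paper runs the argument for the equivalent inequality $\Vert u\Vert_{L^{\overline{p}}(\G)}\le C\bigl(\Vert u'\Vert_{L^{\overline{p}}(\G)}+\bigl|\int_\G u\bigr|\bigr)$ for all $u\in W^{1,\overline{p}}(\G)$, whereas you work directly with mean-zero functions, and you state explicitly the use of connectedness that the paper leaves implicit when it concludes $C_\e=C$ for all edges.
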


\begin{proof}
We prove the following equivalent formulation:  there exists a constant $C>0$ such that
\begin{equation}\label{Poincpre01}
\Vert u \Vert_{L^{\overline{p}}(\G)} \leq C\left(\Vert u' \Vert_{L^{\overline{p}}(\G)} +\left|\int_\G u\right|\right) 
\quad \forall \, u \in  W^{1,\overline{p}}(\G).
\end{equation}

To prove~\eqref{Poincpre01}  let us argue by contradiction. If it  is not true, then, for any $n\in \mathbb{N}$, there exists $u_n\in W^{1,\overline{p}}(\G)$ such that,
\begin{equation}\label{Poincpre01neg}
\Vert u_n \Vert_{L^{\overline{p}}(\G)} \geq n\left(\Vert (u_n)' \Vert_{L^{\overline{p}}(\G)} +\left|\int_\G u_n\right|\right).
\end{equation}
And we can assume that $\Vert u_n \Vert_{L^{\overline{p}}(\G)}=1$. Hence
$$\Vert (u_n)' \Vert_{L^{\overline{p}}(\G)}\le\frac{1}{n}\ \hbox{ and }\   \left|\int_\G u_n\right|\le\frac{1}{n}.$$ 
Therefore, by Theorem~\ref{compact}, we have that there exists a subsequence, that we denote equal, such that
$$u_n\to u \hbox{ weakly in }  W^{1,\overline{p}}(\G),   \hbox{and in } C(\G).$$
Hence, since  $\Vert (u_n)' \Vert_{L^{\overline{p}}(\G)}\le\frac{1}{n}$, each $[u]_e$ is equal to a constant $C_\e$, but,  since $u $ is continuous,  $C_\e=C$ for all $\e$. Now, from $\displaystyle\left|\int_\G u_n\right|\le\frac{1}{n}$ we get that $C=0$, and this gives a contradiction with  $\Vert u_n \Vert_{L^{\overline{p}}(\G)}=1$.
\end{proof}
   As a particular case of \cite[Lemma 4.2]{AndIgMazTo06}  we have the following result.  

 \begin{lemma}\label{Poinc} Let $\gamma$ be a maximal monotone graph in $\R^2$ such that ${\rm Ran}(\gamma) = \R$. Let $u_n \in W^{1,p}(0,\ell)$, $1 < p < \infty$ and $v_n \in \gamma(u_n)$. Suppose that  there exists $M>0$ such that
 $$\int_0^{\ell} \vert v_n \vert dx \leq M \quad \forall \, n \in \N.$$
 Then, there exists $C >0$ such that
 $$\Vert u_n \Vert_{L^p(0,\ell)} \leq C( \Vert u^{\prime}_n )\Vert_{L^p(0,\ell)} +1)  \quad \forall \, n \in \N.$$
\end{lemma}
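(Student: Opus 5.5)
The plan is a contradiction argument by normalization, in the spirit of the proof of Theorem~\ref{teopoincare001}, but with the mean-value constraint replaced by the $L^1$ bound on $v_n$ and the condition ${\rm Ran}(\gamma)=\R$.

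\emph{Setup.} Suppose the inequality fails. Passing to a subsequence, we may assume
\[
\Vert u_n\Vert_{L^p(0,\ell)} > n\left(\Vert u_n'\Vert_{L^p(0,\ell)}+1\right).
\]
In particular $\Vert u_n\Vert_{L^p}\to\infty$ and $\Vert u_n'\Vert_{L^p}/\Vert u_n\Vert_{L^p}\to 0$. Set $w_n:=u_n/\Vert u_n\Vert_{L^p}$. Then $\Vert w_n\Vert_{L^p}=1$ and $\Vert w_n'\Vert_{L^p}\to 0$, so $(w_n)$ is bounded in $W^{1,p}(0,\ell)$. By the compact embedding \eqref{RC_001}, a subsequence converges uniformly on $[0,\ell]$ to some $w$. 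Since $w_n'\to0$ in $L^p$, the limit $w$ is a constant $c$, and $\Vert w\Vert_{L^p}=1$ forces $|c|=\ell^{-1/p}\neq0$. Assume $c>0$; the case $c<0$ is symmetric.

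\emph{Key step.} Uniform convergence gives $w_n\ge c/2$ on all of $[0,\ell]$ for $n$ large. Hence
\[
u_n \ge \tfrac{c}{2}\Vert u_n\Vert_{L^p}=:R_n\to+\infty \quad\text{uniformly on } [0,\ell].
\]
Now use the range condition. Since $\gamma$ is maximal monotone with ${\rm Ran}(\gamma)=\R$, we have $\sup{\rm Ran}(\gamma)=+\infty$, so $\gamma^0(r)\to+\infty$ as $r\to+\infty$, where $\gamma^0(r)$ denotes the minimal section, i.e.\ $\inf\gamma(r)$ for $r$ in the domain. By monotonicity, $v_n(x)\in\gamma(u_n(x))$ gives $v_n(x)\ge\gamma^0(R_n)$ whenever $u_n(x)\ge R_n$. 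Here one must also handle a possibly bounded domain $D(\gamma)$. If $D(\gamma)$ is bounded above, then $u_n(x)\in D(\gamma)$ a.e.\ is already impossible once $R_n>\sup D(\gamma)$. Otherwise $\gamma^0(R_n)\to+\infty$.

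\emph{Conclusion.} In the second case,
\[
\int_0^\ell |v_n|\,dx \ \ge\ \ell\,\gamma^0(R_n)\ \to\ \infty,
\]
which contradicts $\int_0^\ell|v_n|\,dx\le M$.

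\emph{Main obstacle.} The delicate point is the key step: passing from the normalized limit being a nonzero constant to a \emph{pointwise uniform} lower bound on $u_n$, and combining this with the growth of $\gamma$ coming from ${\rm Ran}(\gamma)=\R$. This is why the compact embedding into $C([0,\ell])$ is used rather than mere $L^p$ compactness: it gives uniform control on the whole interval, so the bound on $\int|v_n|$ can be contradicted directly. The constant $C$ produced by the argument depends on $M$, $\ell$, $p$ and $\gamma$.
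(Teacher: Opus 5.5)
Your argument is correct. The paper gives no proof of this lemma. It simply quotes it as a particular case of \cite[Lemma 4.2]{AndIgMazTo06}, a more general result, so there is no in-paper argument to match.

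In several dimensions the same normalization–compactness idea only yields $L^p$ and a.e.\ convergence of $u_n/\Vert u_n\Vert_{L^p}$ to a nonzero constant, and one closes with Fatou's lemma. You instead use the one-dimensional embedding \eqref{RC_001} to get the uniform lower bound $u_n\ge R_n$ on $[0,\ell]$, which makes the final step elementary.

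Contrary to your closing remark, this uniform control is convenient but not essential. A.e.\ convergence $w_n\to c>0$ already gives $u_n=\Vert u_n\Vert_{L^p}w_n\to+\infty$ a.e. The range condition then gives $v_n\to+\infty$ a.e., and Fatou's lemma contradicts $\int_0^\ell|v_n|\le M$.

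In one dimension there is also a direct route that avoids both contradiction and compactness and produces an explicit constant:
\begin{itemize}
\item Since ${\rm Ran}(\gamma)=\R$, the inverse graph $\gamma^{-1}$ is maximal monotone with domain $\R$. Hence it is bounded, say by $L$, on $[-2M/\ell,2M/\ell]$.
\item By Chebyshev's inequality the set $\{|v_n|\le 2M/\ell\}$ has measure at least $\ell/2$. So there is $y_n$ with $v_n(y_n)\in\gamma(u_n(y_n))$ and $|u_n(y_n)|\le L$.
\item Then $|u_n(x)|\le L+\ell^{1-1/p}\Vert u_n'\Vert_{L^p(0,\ell)}$ for every $x\in[0,\ell]$, that is, $\Vert u_n\Vert_{L^p(0,\ell)}\le \ell^{1/p}L+\ell\,\Vert u_n'\Vert_{L^p(0,\ell)}$.
\end{itemize}
Your contradiction argument is the one that transfers to settings without a pointwise oscillation estimate. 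The direct argument shows more transparently how $C$ depends on $M$ and $\gamma$, namely only through $L$.
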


\subsection{Nonlinear semigroups}
\label{sec:semigroups}
Let $X$ be  a Banach space with norm $\norm{\cdot}$.
An operator $A$ on $X$ is a possibly nonlinear
and multi-valued mapping $A : X\to 2^{X}$. It is standard to identify
an operator $A$ on $X$ with its graph in $X \times X$:
\begin{displaymath}
  A=\Big\{(u,v)\in X\times X: v\in Au\Big\}.
\end{displaymath} 
The set
$D(A):=\{u\in X\,\vert\,Au\neq \emptyset\}$ is called the
 domain  of $A$, and
$\textrm{Ran}(A):=\bigcup_{u\in D(A)}Au$ the  range 
of $A$.

\begin{definition}\label{def:quasi-accretive}\rm
  An operator $A$ on $X$ is called
 accretive  on $X$
  if for every $(u,v)$,
  $(\hat{u},\hat{v})\in A$ and every $\lambda\ge 0$,
  \begin{displaymath}
    \norm{u-\hat{u}}\le \norm{u-\hat{u}+\lambda (v-\hat{v})}.
  \end{displaymath}
   An accretive operator $A$ is called {\it $m$-accretive}  if for some (equivalently, for all) $\lambda>0$, the following \emph{range condition} holds true:
  \begin{equation}
    \label{eq:29}
    \hbox{Ran}(I+\lambda A)=X.
  \end{equation}
\end{definition}

If $A$ is  $m$-accretive operator on a Banach space $X$, then, by the
 Nonlinear Semigroup Theory (see, e.g.,~\cite[Theorem~6.5]{Benilanbook},
or~\cite[Corollary~4.2]{Bar10}), the first-order Cauchy problem
\begin{equation}
  \label{eq:30}
  \begin{cases}
    \displaystyle\frac{d u}{dt}+A(u(t))\ni f(t) &\text{on $(0,T)$,}\\[8pt]
     \displaystyle u(0)=u_{0},
  \end{cases}
\end{equation}
is well-posed for every $u_{0}\in \overline{D(A)}$  and
$f\in L^{1}(0,T;X)$  in the following \emph{mild sense}.

\begin{definition}\label{def:mild}\rm 
  For given $u_{0}\in \overline{D(A)}$ and
  $f\in L^{1}(0,T;X)$, a function $u\in C([0,T];X)$ is called a
 mild solution of Cauchy problem~\eqref{eq:30}  on $[0,T]$ if $u(0)=u_{0}$
and, for every
$\epsilon>0$, there is a partition
$0=t_{0}<t_{1}<\cdots < t_{N}=T$, and $f_1, \ldots , f_N \in X$, and a  step
  function
  \begin{displaymath}
    u_{\epsilon,N}(t)=u_{0}\,\1_{\{t=0\}}(t)+\sum_{i=1}^{N}u_{i}\,\1_{(t_{i-1},t_{i}]}(t)
    \quad\text{defined for  $t\in [0,T]$,}
  \end{displaymath}
 such that
  \begin{align*}
   &t_{i}-t_{i-1}<\epsilon\quad\text{ for all $i=1,\dots,N$,}\\[8pt]
    &\sum_{i=1}^{N}\int_{t_{i-1}}^{t_{i}}\norm{f(t)-f_{i}}\,dt<\epsilon,\\[8pt]
   & \frac{u_{i}-u_{i-1}}{t_{i}-t_{i-1}}+A u_{i}\ni
                   f_{i}
                   \quad\text{ for all $i=1,\dots,N$,}\\
  and \hspace{1cm}& \hfill \\
       & \sup_{t\in [0,T]}\norm{u(t)-u_{\epsilon,N}(t)}<\epsilon.
  \end{align*}
\end{definition}

\begin{theorem}[ Crandall-Liggett Theorem]\label{EUm-accretive} Let $A$ be an operator in $X$, $f \in L^1(0, T; X)$ and $x_0
\in \overline{D(A)}$. If $A$ is m-accretive, then the problem \eqref{eq:30}
has a unique mild solution $u$ on  $[0, T]$.
\end{theorem}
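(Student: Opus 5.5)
The statement to prove is the Crandall–Liggett Theorem (Theorem~\ref{EUm-accretive}). I would follow the classical route of Crandall–Liggett and Bénilan, working with the implicit Euler scheme and the resolvents $J_\lambda=(I+\lambda A)^{-1}$.

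\textbf{Step 1 (resolvents).} By $m$-accretivity, $J_\lambda$ is defined on all of $X$ for every $\lambda>0$. Accretivity gives $\|J_\lambda x-J_\lambda y\|\le\|x-y\|$, so $J_\lambda$ is single-valued and nonexpansive.

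\textbf{Step 2 (discrete solutions).} Fix a partition $0=t_0<\dots<t_N=T$ of mesh $<\epsilon$, and let $f_i$ be the averages of $f$ on $(t_{i-1},t_i]$. Then the step approximations of $f$ converge to $f$ in $L^1(0,T;X)$. Define recursively
\[
u_i=J_{t_i-t_{i-1}}\bigl(u_{i-1}+(t_i-t_{i-1})f_i\bigr),
\]
which solves exactly the discrete inclusions in Definition~\ref{def:mild}. So $\epsilon$-discrete solutions always exist.

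\textbf{Step 3 (Cauchy estimate).} This is the core of the proof and the main obstacle. Take two discrete solutions $u^{\epsilon}$ and $\hat u^{\hat\epsilon}$ on different partitions $\{t_i\}$, $\{\hat t_j\}$, with data $f_i$, $\hat f_j$. Set $a_{i,j}=\|u_i-\hat u_j\|$.

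Accretivity, in the form $\|x-y\|\le\|x-y+\lambda(Ax-Ay)\|$, combined with the convex combination trick, gives the two-index recursive inequality
\[
a_{i,j}\le \frac{\hat\delta_j}{\delta_i+\hat\delta_j}\,a_{i-1,j}+\frac{\delta_i}{\delta_i+\hat\delta_j}\,a_{i,j-1}+\frac{\delta_i\hat\delta_j}{\delta_i+\hat\delta_j}\,\|f_i-\hat f_j\|,
\]
where $\delta_i=t_i-t_{i-1}$ and $\hat\delta_j=\hat t_j-\hat t_{j-1}$.

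The boundary terms $a_{i,0}$ and $a_{0,j}$ are bounded by $\|u_0-\hat u_0\|$ plus $\|u_i-u_0\|$. When $u_0\in D(A)$ one has $\|u_i-u_0\|\le t_i|Au_0|+\int\|f\|$; for general data one first takes $u_0\in D(A)$ and approximates at the end.

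I would then compare $a_{i,j}$ with the solution of the same recursion with equality. Bénilan's comparison lemma bounds that solution by
\[
\|u_0-\hat u_0\| + \bigl(\text{term in } |t_i-\hat t_j|\bigr) + \int\!\!\int\|f(s)-f(\tau)\| \text{ near the diagonal}.
\]
This is a Kolmogorov-type random-walk estimate and is the delicate computation. It yields
\[
\sup_t\|u^\epsilon(t)-\hat u^{\hat\epsilon}(t)\|\to 0 \quad\text{as } \epsilon,\hat\epsilon\to0,
\]
uniformly in $t$, for $u_0\in D(A)$.

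\textbf{Step 4 (existence).} The discrete solutions are Cauchy in $L^\infty(0,T;X)$, so they converge uniformly to some $u$. Continuity of $u$ follows from the same estimate applied to time shifts. For $u_0\in\overline{D(A)}$, the $L^1$-contraction in the initial data (the $\|u_0-\hat u_0\|$ term) extends existence by density. By construction $u$ satisfies Definition~\ref{def:mild}.

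\textbf{Step 5 (uniqueness).} Any mild solution is a uniform limit of $\epsilon$-discrete solutions. Step 3 shows that any two such families are uniformly close, so two mild solutions coincide.
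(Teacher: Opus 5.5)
The paper gives no proof of this theorem; it imports it from the literature (B\'enilan's book, Theorem~6.5; Barbu, Corollary~4.2). Your outline (nonexpansive resolvents, implicit Euler schemes on arbitrary partitions, the two-index inequality from accretivity with $\lambda=\delta_i\hat\delta_j/(\delta_i+\hat\delta_j)$, a Kobayashi--B\'enilan bound for that recursion, and a density reduction to $u_0\in D(A)$) is the classical proof behind those references, so it is sound as a sketch. The Step~3 bound is taken as a black box, so invoke it in the version allowing arbitrary admissible data $f_i,\hat f_j$, with error terms such as $\int_0^T\|f-f^{\epsilon}\|\,dt$: Definition~\ref{def:mild} does not force the $f_i$ to be averages, and your uniqueness argument in Step~5 relies on exactly that generality.
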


  Ph. Bénilan in \cite{B} (see also \cite{Benilanbook}) introduced the following concept of solution for Problem~\eqref{eq:30}.
\begin{definition}\label{intSol}\rm  A function $u \in C(0, T; X)$ is an {\it integral solution} of problem \eqref{eq:30} if $u(0) = u_0$ and, for all $(x,y) \in A$, the following inequality holds in $\mathcal{D}^{\prime}(]0,T[)$:
\begin{equation}\label{E1IntSol}\frac{d}{dt} \Vert u(t) - x \Vert \leq [u(t) - x, f(t) - y],
\end{equation}
  where  the bracket $[.,.]$ is defined as
$$[x,y]:= \lim_{\lambda \to 0} \frac{\Vert x + \lambda y \Vert - \Vert x \Vert}{\lambda}.$$
\end{definition}
Moreover  the following result was proved. 
 \begin{theorem}[\cite{B}, \cite{Benilanbook}]\label{mildInt} If $A$ is accretive, then
 $$u \ \hbox{is a mild solution of} \ \eqref{eq:30} \iff u \ \hbox{is an integral solution of} \ \eqref{eq:30}.$$
\end{theorem}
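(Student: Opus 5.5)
This is Bénilan's classical equivalence, and I would prove the two implications separately, using only accretivity and the brackets $[\cdot,\cdot]$ of Definition~\ref{intSol}. The first ingredient is the discrete inequality. Take a step of the scheme in Definition~\ref{def:mild}, $u_i + (t_i-t_{i-1})w_i = u_{i-1} + (t_i-t_{i-1}) f_i$ with $w_i\in Au_i$, and any $(x,y)\in A$. Accretivity, in the equivalent form $[u-\hat u, v-\hat v]\ge 0$, together with the elementary inequality $\Vert a\Vert-\Vert b\Vert\le [a,a-b]$ gives
\[
\Vert u_i-x\Vert-\Vert u_{i-1}-x\Vert\le (t_i-t_{i-1})\,[u_i-x,\,f_i-y].
\]
Summing these, I would obtain for the step function $u_{\epsilon,N}$ the integrated inequality
\[
\Vert u_{\epsilon,N}(t)-x\Vert-\Vert u_{\epsilon,N}(s)-x\Vert\le \int_s^t [u_{\epsilon,N}(\tau)-x,\,f_{\epsilon,N}(\tau)-y]\,d\tau .
\]

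\textbf{Mild $\Rightarrow$ integral.} Let $\epsilon\to 0$. Then $u_{\epsilon,N}\to u$ uniformly and $f_{\epsilon,N}\to f$ in $L^1(0,T;X)$. The bracket is upper semicontinuous in its first argument and Lipschitz with constant $1$ in its second. Fatou's lemma applied to the limsup therefore yields the integrated form
\[
\Vert u(t)-x\Vert-\Vert u(s)-x\Vert\le\int_s^t [u(\tau)-x,\,f(\tau)-y]\,d\tau ,
\]
which is equivalent to \eqref{E1IntSol} in $\mathcal D'(]0,T[)$.

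\textbf{Integral $\Rightarrow$ mild.} I would compare an integral solution $u$ with the discrete scheme. The key step is Kruzhkov-type doubling of variables, applied to two ``integral'' objects: $u(t)$ and the $\epsilon$-approximants $v(s)=u_{\epsilon,N}(s)$. This is where I expect the main difficulty.

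Plugging $(x,y)=(u_i,w_i)$ into the inequality for $u$ gives
\[
\frac{d}{dt}\Vert u(t)-u_i\Vert\le [u(t)-u_i,\,f(t)-w_i].
\]
Combining this with the discrete inequality taken at the points $x=u(t)$ is problematic, because $u(t)$ need not lie in $D(A)$. To avoid this, one writes $w_i=f_i-(u_i-u_{i-1})/h_i$, where $h_i=t_i-t_{i-1}$, and uses only the inequality for $u$, which yields
\[
\frac{d}{dt}\Vert u(t)-u_i\Vert\le \Vert f(t)-f_i\Vert+\frac{\Vert u(t)-u_{i-1}\Vert-\Vert u(t)-u_i\Vert}{h_i}.
\]
This is a discrete-in-$s$, continuous-in-$t$ inequality for $\varphi(t,i)=\Vert u(t)-u_i\Vert$, exactly of the type handled by Bénilan's comparison lemma. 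It gives
\[
\varphi(t,N)\le \Vert u_0-u_0\Vert+\int\Vert f-f_{\epsilon,N}\Vert+o(1)
\]
along the diagonal, once continuity of $u$ and the initial-data error are controlled.

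If $A$ is m-accretive, a mild solution exists by Theorem~\ref{EUm-accretive}, and this estimate forces $u$ to equal it. In general I would follow \cite{B}, \cite{Benilanbook}: the bound holds for any $\epsilon$-scheme, so $u$ is the uniform limit of these schemes and hence is mild.
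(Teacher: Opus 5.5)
The paper gives no proof of this statement; it is quoted from B\'enilan, and your plan is the classical argument from that source. The discrete inequality and the direction mild $\Rightarrow$ integral are fine. Upper semicontinuity of the bracket in its first argument, $1$-Lipschitz dependence on the second, and reverse Fatou with the integrable majorant $\Vert f-y\Vert$ give exactly the integrated form.

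Two small repairs are needed there. First, the summed inequality for $u_{\epsilon,N}$ holds only between partition points; for general $s,t$ there is an error that vanishes as $\epsilon\to0$. Second, accretivity in the form ``right bracket $\ge 0$'' cannot simply be combined with $\Vert a\Vert-\Vert b\Vert\le[a,a-b]$ via sublinearity. That would require the \emph{left} derivative of the norm at $a=u_i-x$ in the direction $w_i-y$ to be nonnegative, which accretivity does not give. Use instead $\Vert a\Vert-\Vert b\Vert\le[a,a-b]_-$ together with $[a,c-d]_-\le[a,c]_+-[a,d]_+$ (or Kato's duality-map characterization of accretivity), where $[\cdot,\cdot]_\pm$ denote the one-sided derivatives of the norm.

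The genuine gap is your last sentence. For a merely accretive $A$, the implication integral $\Rightarrow$ mild is false, so no argument can produce it. The phrase ``the bound holds for any $\epsilon$-scheme'' is vacuous when no $\epsilon$-scheme exists. Having $\epsilon$-discretizations for every $\epsilon$ means solving $u_i+h_iAu_i\ni u_{i-1}+h_if_i$ with $f_i$ close to $f$ in $L^1$; this is a range condition that accretivity does not supply.

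Concretely, take $X=\R$, $A=\{(0,0)\}$, $u_0=0$ and $f\equiv 1$. Then $u(t)=t$ satisfies $\frac{d}{dt}|u(t)-0|=1=[u(t),1]$ on $]0,T[$, so it is an integral solution in the sense of Definition~\ref{intSol}. But any scheme forces $u_i=0$ and $f_i=0$, so $\sum_i\int_{t_{i-1}}^{t_i}|f-f_i|\,dt=T$ and no mild solution exists.

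What your argument does prove is this: mild solutions are integral solutions, and whenever a mild solution with the same data exists (e.g.\ $A$ m-accretive and $u_0\in\overline{D(A)}$), every integral solution coincides with it. That is all the paper uses, since $\mathcal{A}$ is m-T-accretive by Theorem~\ref{Dominio}. You should state the result under that hypothesis and drop the ``in general'' claim.

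Be aware also that in this branch the comparison lemma for $\partial_t\varphi(t,i)+(\varphi(t,i)-\varphi(t,i-1))/h_i\le\Vert f(t)-f_i\Vert$ is the entire technical content, and you only assert it. It does not return $\int\Vert f-f_{\epsilon,N}\Vert$. It returns two kinds of terms:
\begin{itemize}
\item a kernel-averaged quantity $\sum_i\int\rho_i(\tau)\Vert f(\tau)-f_i\Vert\,d\tau$, with kernels whose width tends to $0$, controlled by $L^1$-continuity of translations of $f$;
\item corner terms $\Vert u(\tau)-u_0\Vert$ and $\Vert u_0-u_j\Vert$ for small $\tau$ and $t_j$.
\end{itemize}
The corner term $\Vert u_0-u_j\Vert$ must be bounded uniformly in $\epsilon$, through the discrete inequality with a fixed $(x,y)\in A$ close to $u_0$. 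This is exactly where $u_0\in\overline{D(A)}$ enters.
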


\begin{definition}{\rm Let $X$ be a Banach lattice and $S : D(S) \subset
X \rightarrow X$. We say that $S$ is a  
T-contraction  if
$$\Vert (Su - Sv)^+ \Vert \leq \Vert (u - v)^+ \Vert \ \ \
\hbox{for} \ u, v \in D(S).$$

Let $A$ be an operator in a Banach lattice $X$. We say that $A$ is  
T-accretive   if
$$\Vert (u - \hat u)^+ \Vert \leq \Vert (u - \hat u + \lambda (v - \hat v))^+
\Vert \ \ \ \hbox{for} \ (u, v), (\hat u, \hat v) \in A \ \
\hbox{and} \ \lambda > 0.$$ }
\end{definition}

It is clear that a T-contraction is order-preserving; also if
$A$ is a T-accretive operator then its resolvents $(I + \lambda
A)^{-1}$ are single-valued  and T-contractions.

 The mild solutions of the abstract
 Cauchy problems associated with T-accretive operators satisfy
 a comparison and contraction principle.

 \begin{theorem}\label{ContPrinTT} Let $A$
 be a $m$-T-accretive operator in a Banach lattice $X$. If $f, \hat f \in L^1(0, T; X)$, and $u$, $\hat u$
  are mild solutions of $u' + A u \ni f$ and $\hat{u}' + A \hat{u} \ni
  \hat{f}$ on $[0, T]$, then, for $0\leq s \leq t \leq T$,
  $$\Vert (u(t) - \hat{u}(t))^+ \Vert \leq \Vert (u(s) - \hat{u}(s))^+
  \Vert + \int_s^t \norm{ (f(\tau) - \hat{f}(\tau))^+} d \tau.$$
\end{theorem}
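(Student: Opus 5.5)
We prove the comparison estimate by discretizing in time. The key tool is that the resolvents $J_\lambda=(I+\lambda A)^{-1}$ are T-contractions. We first establish the inequality for the $\epsilon$-approximate step solutions of Definition~\ref{def:mild}. Then we pass to the limit using the uniform convergence $u_{\epsilon,N}\to u$ in $C([0,T];X)$.

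Fix $\epsilon>0$. Use a common partition $0=t_0<\dots<t_N=T$ for both problems; this is possible since the Crandall--Liggett construction works for any sufficiently fine partition. Take step values $f_i,\hat f_i$ and discrete solutions $u_i,\hat u_i$ with
$$u_i+h_iAu_i\ni u_{i-1}+h_if_i,\qquad \hat u_i+h_iA\hat u_i\ni \hat u_{i-1}+h_i\hat f_i,\qquad h_i=t_i-t_{i-1}.$$
Set $w_i=u_i-\hat u_i$ and pick $v_i\in Au_i$, $\hat v_i\in A\hat u_i$ realizing these inclusions. Then $w_i+h_i(v_i-\hat v_i)=w_{i-1}+h_i(f_i-\hat f_i)$, and T-accretivity gives
$$\|w_i^+\|\le \|(w_{i-1}+h_i(f_i-\hat f_i))^+\|\le \|w_{i-1}^+\|+h_i\|(f_i-\hat f_i)^+\|.$$
The last step uses $(a+b)^+\le a^++b^+$ together with monotonicity of the lattice norm. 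Summing over $i$ with $t_i\in(s,t]$ gives
$$\|(u_{\epsilon}(t)-\hat u_\epsilon(t))^+\|\le\|(u_\epsilon(s)-\hat u_\epsilon(s))^+\|+\sum_i h_i\|(f_i-\hat f_i)^+\|.$$

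Now let $\epsilon\to0$. The left side and the first term on the right converge, since $x\mapsto\|x^+\|$ is $1$-Lipschitz on a Banach lattice and the step functions converge uniformly. The sum is bounded by $\int_{s'}^{t'}\|(f-\hat f)^+\|\,d\tau+2\epsilon$, where $s',t'$ are partition points within $\epsilon$ of $s,t$. This uses $\|(f_i-\hat f_i)^+-(f-\hat f)^+\|\le\|f_i-f\|+\|\hat f_i-\hat f\|$ and the approximation property of $f_i,\hat f_i$. Absolute continuity of the integral then handles the endpoint mismatch.

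The main technical point is using a common partition for both problems, with the evaluation times $s,t$ handled by step-function indices. If the partitions from Definition~\ref{def:mild} cannot be chosen equal, there are two options. One is to invoke the independence of the mild solution from the approximating scheme, which is part of Theorem~\ref{EUm-accretive}. The other is to use the $\lambda$-$\mu$ doubling estimate of Bénilan; since $\|\cdot^+\|$ enjoys the same subadditivity as the norm, the estimate carries over verbatim with $\|\cdot\|$ replaced by $\|(\cdot)^+\|$.
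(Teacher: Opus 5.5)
The paper gives no proof of this theorem. It is quoted as a standard result of B\'enilan's nonlinear semigroup theory. There, the usual argument is the doubling of the time variable. For two discrete schemes on \emph{different} partitions $\{t_i\}$, $\{\hat t_j\}$, with steps $h_i$, $\hat h_j$, the quantity $a_{i,j}=\|(u_i-\hat u_j)^+\|$ satisfies, by T-accretivity with $\lambda=h_i\hat h_j/(h_i+\hat h_j)$ and sublinearity of $x\mapsto\|x^+\|$,
\[
a_{i,j}\le \frac{\hat h_j}{h_i+\hat h_j}\,a_{i-1,j}+\frac{h_i}{h_i+\hat h_j}\,a_{i,j-1}+\frac{h_i\hat h_j}{h_i+\hat h_j}\,\|(f_i-\hat f_j)^+\|,
\]
and solving this recursion gives the estimate in the limit. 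In fact it gives a sharper form, with integrand $[u-\hat u,f-\hat f]_+:=\lim_{\lambda\downarrow0}\lambda^{-1}(\|(u-\hat u+\lambda(f-\hat f))^+\|-\|(u-\hat u)^+\|)\le\|(f-\hat f)^+\|$.

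Your proof is correct, but organized differently. On a common partition the recursion collapses to your one-index inequality $\|w_i^+\|\le\|w_{i-1}^+\|+h_i\|(f_i-\hat f_i)^+\|$. The analytic work is then pushed into the claim that discrete solutions on an \emph{arbitrary} partition of mesh $<\epsilon$ converge uniformly to the mild solution. The data there can be any admissible $f_i,\hat f_i$, for instance cell averages of $f,\hat f$. That claim is true, with two caveats.

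\begin{enumerate}
\item The common partition is never free. Definition~\ref{def:mild} is existential: it supplies \emph{some} partition for each $\epsilon$. Theorem~\ref{EUm-accretive}, as stated in the paper, asserts only existence and uniqueness. So your first option is not implied by that theorem as written; you need the full Crandall--Evans/B\'enilan convergence theorem for all $\epsilon$-approximate solutions. Uniqueness alone does not let you run $\hat u$'s scheme on $u$'s partition.
\item That convergence theorem is itself proved by the same doubling estimate, with $\|\cdot\|$ in place of $\|(\cdot)^+\|$. So your two options are not independent: the first rests on the machinery of the second.
\end{enumerate}

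What your route buys is modularity. Beyond the standard convergence theorem, the only property of $A$ you use is the one-step T-contraction of the resolvents, and the bookkeeping is transparent. The direct doubling argument is self-contained, needs no common partition, and yields the sharper bracket form.

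One small correction. If $s\in(t_{j-1},t_j]$ and $t\in(t_{k-1},t_k]$, the sum should run over $i=j+1,\dots,k$, not over ``$t_i\in(s,t]$''. Your later choice $s'=t_j$, $t'=t_k$ already accounts for this.
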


 Following~\cite{BenCra91}, consider the following relation for $u,v \in L^1(\G)$: 
$$u\ll v   \hbox{ if }   \int_\G (u - k)^+ \leq \int_\G (v - k)^+   \hbox{ and }    \int_\G (u +k)^- \leq \int_\G (v + k)^-    \hbox{ for any } \ k >0.$$
From~\cite{BenCra91}  we have  the following result .
\begin{proposition}\label{inq1BC}  The following properties hold true:
\item[\ \ (1)] $\displaystyle \int_\G v\chi_{\{u<-h\}}dx\le 0\le  \int_\G v\chi_{\{u>h\}}dx$ for all $h>0$ iff 
$u\ll u+\lambda v$ for all $\lambda>0$. 
\item[\ \ (2)] If  $u \ll v$  then $\Vert u \Vert_p \leq \Vert v \Vert_p $ for any $p \in [1,+\infty]$.
\item[\ \ (3)] If $v \in L^1(\G)$, then $\{ u \in L^1(\G)  :  u \ll v \}$ is a weakly compact subset of $L^1(\G)$.
\item[\ \ (4)] If $\{u_n\}_n$ is a sequence in $L^1(\G)$ with $u_n\ll u$ for all $n$, and $u_n\to u$ weakly in $L^1(\G)$, then it converges strongly.
\end{proposition}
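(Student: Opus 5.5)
The statement is Proposition~\ref{inq1BC}, the Bénilan--Crandall properties of the relation $\ll$ on $L^1(\G)$. Since $\int_\G$ is just a finite sum of integrals over the intervals $(0,\ell_\e)$, $(\G,\d x)$ is a finite measure space. The plan is to reprove each item directly in that setting, where the arguments of~\cite{BenCra91} carry over verbatim. Throughout we use convex functions $j:\R\to[0,\infty)$ with $j(0)=0$. The key auxiliary fact is
\[
u\ll v \iff \int_\G j(u)\le\int_\G j(v)\ \hbox{ for every convex } j \hbox{ with } j(0)=0 .
\]
For the forward direction, such a $j$ splits as $j=j_++j_-$, with $j_+$ nondecreasing on $[0,\infty)$ and zero on $(-\infty,0]$, and $j_-$ the symmetric counterpart. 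Each of $j_\pm$ is a monotone limit of positive combinations of the functions $(r-k)^+$ and $(r+k)^-$, $k>0$, and we conclude by monotone convergence.

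\textbf{Item (1).} The map $\lambda\mapsto\int_\G (u+\lambda v-k)^+$ is convex. Its right derivative at $\lambda=0$ is
\[
\int_{\{u>k\}} v+\int_{\{u=k\}} v^+ .
\]
Suppose the sign conditions hold for all $h>0$. Letting $h\uparrow k$, the sets $\{u>h\}$ decrease to $\{u\ge k\}$, so $\int_{\{u\ge k\}}v\ge0$ by dominated convergence. Together with $\int_{\{u>k\}}v\ge 0$, this makes the right derivative nonnegative. Convexity then gives $\int_\G(u+\lambda v-k)^+\ge\int_\G(u-k)^+$ for all $\lambda>0$, and the negative-part inequality is symmetric. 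Conversely, if $u\ll u+\lambda v$ for all $\lambda>0$, then the difference quotient is nonnegative. Applying this at level $k$ slightly above $h$ and letting $k\downarrow h$ recovers $\int_{\{u>h\}}v\ge0$.

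\textbf{Item (2).} For $p<\infty$ take $j(r)=|r|^p$ in the auxiliary fact. For $p=\infty$, set $k=\|v\|_\infty$; then $\int_\G(u-k)^+\le\int_\G (v-k)^+=0$, and similarly for the negative part, so $\|u\|_\infty\le\|v\|_\infty$.

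\textbf{Item (3).} The set $\{u:u\ll v\}$ is convex and closed in $L^1(\G)$, because the defining maps $u\mapsto\int_\G(u-k)^\pm$ are continuous and convex. Hence it is weakly closed. For weak compactness we use Dunford--Pettis, so we need uniform integrability. On a set $E$ we have $|u|\le k+(u-k)^++(u+k)^-$, which gives
\[
\int_E|u|\le k|E|+\int_\G(v-k)^++\int_\G(v+k)^- .
\]
Choosing $k$ large and then $|E|$ small yields equi-integrability. Boundedness in $L^1$ follows from item (2).

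\textbf{Item (4).} This is the main obstacle, and I would handle it via uniform convexity. Take $j(r)=\sqrt{1+r^2}-1$, which is strictly convex. Then $\int_\G j(u_n)\le\int_\G j(u)$, while weak lower semicontinuity gives $\liminf_n\int_\G j(u_n)\ge\int_\G j(u)$, so $\int_\G j(u_n)\to\int_\G j(u)$. The same holds for $\int_\G j\big(\tfrac{u_n+u}{2}\big)$, since $\tfrac{u_n+u}{2}\rightharpoonup u$. Now consider the nonnegative integrand
\[
\tfrac12 j(u_n)+\tfrac12 j(u)-j\Big(\tfrac{u_n+u}{2}\Big).
\]
Its integral tends to zero, so it converges to zero in $L^1$, hence in measure along a subsequence. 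Strict convexity, together with the local uniform control furnished by the equi-integrability from item (3), then forces $u_n-u\to0$ in measure. Equi-integrability and Vitali's theorem upgrade this to strong convergence in $L^1(\G)$. A subsequence argument gives convergence of the full sequence.
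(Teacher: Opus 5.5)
Your proof is correct. It necessarily takes a different route from the paper, because the paper proves nothing here: the proposition is imported from B\'enilan--Crandall~\cite{BenCra91}, where these properties are established on general measure spaces through the class of convex lower semicontinuous $j\ge0$ with $j(0)=0$. You instead give a self-contained argument on the finite measure space $(\G,\d x)$. It uses only the easy direction of the convex-function characterization of $\ll$, a right-derivative computation for (1), Mazur plus Dunford--Pettis for (3), and for (4) the strictly convex $j(r)=\sqrt{1+r^2}-1$ followed by Vitali. The citation buys brevity and generality. Your version needs nothing beyond standard measure theory and makes visible where finiteness of $|\G|$ enters: there is no tightness issue in Dunford--Pettis, and Vitali applies directly.

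A few steps deserve one more line when written out.

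(i) The representation of $j_+$ contains the kink term $j'_+(0)\,r^+$, while $\ll$ is tested only at $k>0$. This term, and the case $\Vert v\Vert_\infty=0$ in (2), both need the monotone limit $k\downarrow0$.

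(ii) In (1) the passage $h\uparrow k$ is superfluous. The right derivative is already at least $\int_{\{u>k\}}v\ge0$, because $v^+\ge0$. In the converse, $\int_{\{u=k\}}v^+\to0$ as $k\downarrow h$ because $\{u=k\}\subset\{h<u\le k\}$.

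(iii) In (4), the bound $\limsup_n\int_\G j\big(\tfrac{u_n+u}{2}\big)\le\int_\G j(u)$ comes from convexity of $j$ (or from $\tfrac{u_n+u}{2}\ll u$), not from weak convergence. The passage from strict convexity to convergence in measure should also be made quantitative. Set
\[
c_{M,\delta}:=\min\Big\{\tfrac12 j(a)+\tfrac12 j(b)-j\big(\tfrac{a+b}{2}\big):\ |a|,|b|\le M,\ |a-b|\ge\delta\Big\}>0,
\]
and let $g_n$ be your nonnegative integrand. Then $|\{|u_n-u|\ge\delta\}|\le 2\Vert u\Vert_1/M+c_{M,\delta}^{-1}\Vert g_n\Vert_1$, using $\Vert u_n\Vert_1\le\Vert u\Vert_1$ from (2). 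Finally, $g_n\to0$ in $L^1$ already gives convergence in measure of the whole sequence, so no subsequence is needed there.
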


\section{Doubly Nonlinear  Equations  on Metric Graphs. }\label{sec23}

\subsection{Results on a bounded interval}
Let $1 < p < \infty$ be. Let us begin with some known facts for the $p$-Laplacian problem on a bounded interval,  which can be seen as the trivial metric graph of one edge.  

Given an interval $[0, \ell]$,  $g\in L^1(0,\ell)$  and $a,b\in \mathbb{R}$, we say that $u \in W^{1,p}(0,\ell)$ is a {\it weak solution} of the Neumann problem 
 \begin{equation}\label{NeumannI}
 \left\{ \begin{array}{ll}  -\Delta_pu = g \quad \hbox{in} \ (0,\ell), 
 \\[10pt]  -(\vert u' \vert^{p-2} u')(0) = a, 
  \\[10pt]  (\vert u' \vert^{p-2} u')(\ell) =b,
 \end{array} \right.
 \end{equation}
 if
 \begin{equation}\label{weakform}
\int_0^\ell |u'|^{p-2}u' \varphi'\,dx
= \int_0^\ell g\,\varphi\,dx
+ a\,\varphi(0)+b\,\varphi(\ell)
\quad
\forall \varphi\in W^{1,p}(0,\ell).
\end{equation}
Note that, since $W^{1,p}(0, \ell) \subset C([0, \ell])$, we have that the last integral is finite. And by taking $\varphi=1$ we need the compatibility condition
 $$\int_0^\ell g(x)dx+a+b=0.$$

 \begin{remark}\label{regularity}\rm  1. For $g\in L^1(0,\ell)$,  let $u\in W^{1,p}(0,\ell)$ be a  weak solution
 of problem~\eqref{NeumannI}. Then, $$\left(|u'|^{p-2}u'\right)'=g\quad\hbox{in } D'(]0,\ell[).$$ This implies that 
$
|u'|^{p-2}u' \in W^{1,1}(0,\ell)$, and hence, by Sobolev embedding we can assume that $|u'|^{p-2}u'\in C([0,\ell])$. 
Therefore, $u$ has a representative in $C^1([0,1])$.  
Moreover, we have that
$$\int_y^zg(x)dx=(|u'|^{p-2}u')(y)-(|u'|^{p-2}u')(z)\quad\forall y,z\in[0,\ell].$$

2. For $z,w \in W^{1,1}(0,\ell)$, we   have the following integration by parts formula:
\begin{equation}\label{IPR}
\int_0^\ell z'(x)  w(x) \, dx = - \int_0^\ell z(x)  w'(x) \, dx + z(\ell)w(\ell) - z(0) w(0).
\end{equation}
\hfill$\blacksquare$
\end{remark}

 Given   $\gamma:\R \to \R$ continuous and increasing with $\gamma(\mathbb{R})=\mathbb{R}$ and $\gamma(0)=0$,  $1 < p < \infty$, $f \in L^1(0,\ell)$ and $a,b \in \R$,   consider the problem
$$(P_{p,a,b}^{\gamma,f})\quad\left\{\begin{array}{lll}
v -\Delta_{p} u =f \quad\hbox{in } (0,\ell),\\
\\
v =\gamma(u)\quad\hbox{a.e. in } (0,\ell),
\\ \\
-\left(\vert u'\vert^{p-2} u' \right)(0)=a,\\ \\ \left(\vert u'\vert^{p-2} u' \right)(\ell)=b.
\end{array}\right.$$
We give the following concept of solution for this problem.
\begin{definition} \rm We say that   $ v\in L^1(0, \ell)$  is a {\it weak solution} of problem $(P_{p,a,b}^{\gamma,f})$ if   there exists $u\in W^{1,p}(0,\ell)$, with  $v=\gamma(u)$ a.e. in $(0, \ell)$, such that
\begin{equation}\label{Weak1NN}\int_0^{\ell} v \phi   + \int_{0}^{\ell}|u'|^{p-2}u' \phi^{\prime}    = \int_0^{\ell} f \phi  +     b\phi(\ell) + a\phi(0) \quad \forall \, \phi \in W^{1,p}(0, \ell). \end{equation}
\end{definition}

  If $v$ is a weak solution to the problem $(P_{p,a,b}^{\gamma,f})$, we have the following  mass balance property,
\begin{equation}\label{test1}
\int_0^{\ell} v =\int_0^{\ell} f  +  b + a,
\end{equation}
  and we can assume that $u$ is $C^1([0,\ell])$ (see~Remark~\ref{regularity}).

From~\cite[Theorem 3.6]{AndIgMazTo07} we have the following result.

\begin{theorem}\label{lemma1}  Let $\gamma:\R \to \R$ be continuous and increasing with $\gamma(\mathbb{R})=\mathbb{R}$ and $\gamma(0)=0$, and   $1 < p < \infty$. For $f \in L^1(0,\ell)$ and $a,b \in \R$, there exists a unique weak solution to problem $(P_{p,a,b}^{\gamma,f})$.
\end{theorem}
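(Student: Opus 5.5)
The statement is Theorem~\ref{lemma1}: existence and uniqueness of a weak solution of $(P_{p,a,b}^{\gamma,f})$. The paper quotes it from \cite[Theorem 3.6]{AndIgMazTo07}; I would also prove it directly on the interval.

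\emph{Uniqueness.} Let $(u,v)$ and $(\hat u,\hat v)$ be two weak solutions with the same data. Subtract the two weak formulations \eqref{Weak1NN} and test with $\phi = T_k^+(u-\hat u)/k$, where $T_k^+(r)=\min(r^+,k)$. This $\phi$ lies in $W^{1,p}(0,\ell)$. The boundary terms $a\phi(0)$ and $b\phi(\ell)$ are identical in both formulations, so they cancel. By monotonicity of $r\mapsto |r|^{p-2}r$, the diffusion term is nonnegative. Hence
$$\int_0^\ell (v-\hat v)\,\tfrac{1}{k}T_k^+(u-\hat u)\le 0 .$$
Let $k\to0$. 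The integrand converges to $(v-\hat v)\chi_{\{u>\hat u\}}$. Since $\gamma$ is increasing, $v-\hat v>0$ on $\{u>\hat u\}$, so this set has measure zero. By symmetry $u=\hat u$ a.e., and therefore $v=\hat v$.

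\emph{Existence.} Assume first that $f\in L^{p'}(0,\ell)$ and that $\gamma$ is strictly increasing (replace $\gamma$ by $\gamma+\epsilon I$ if needed). Consider the functional
$$J(u)=\int_0^\ell \Big(j(u)+\tfrac1p|u'|^p - fu\Big)-au(0)-bu(\ell),\qquad j(r)=\int_0^r\gamma.$$

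\emph{Coercivity of $J$.} Since $\gamma(\R)=\R$, we have $j(r)\ge M|r|-C_M$ for every $M$. The embedding $W^{1,p}\subset C([0,\ell])$ gives $|u(0)|,|u(\ell)|\le \|u\|_\infty$. Splitting $u=\bar u+(u-\bar u)$, the terms $\int|u'|^p$ and $M\ell|\bar u|$ (with $M$ large) dominate the linear terms.

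\emph{Minimizer and Euler--Lagrange equation.} $J$ is convex and continuous, so it has a minimizer in $W^{1,p}$. Its Euler--Lagrange equation is exactly \eqref{Weak1NN}.

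\emph{General data.} For $f\in L^1$, take $f_n\in L^\infty$ with $f_n\to f$ in $L^1$, and let $(u_n,v_n)$ be the corresponding solutions. The argument above, run with $(\hat v-v)$ kept in the estimate, gives the $L^1$-contraction
$$\|v_n-v_m\|_{L^1}\le\|f_n-f_m\|_{L^1}.$$
So $v_n\to v$ in $L^1$.

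\emph{Compactness for $u_n$.} Testing with $\phi=u_n$ gives
$$\int|u_n'|^p\le \|f_n\|_1\|u_n\|_\infty+(|a|+|b|)\|u_n\|_\infty .$$
Lemma~\ref{Poinc}, which applies because $\|v_n\|_1$ is bounded, controls $\|u_n\|_p$ by $\|u_n'\|_p+1$. Together these give a uniform $W^{1,p}$ bound. Hence $u_n\to u$ uniformly and weakly in $W^{1,p}$, and $v=\gamma(u)$.

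\emph{Passing to the limit.} The key step is passing to the limit in $|u_n'|^{p-2}u_n'$. For this, use Remark~\ref{regularity}. From $(|u_n'|^{p-2}u_n')'=v_n-f_n$ and the boundary value $a$, the fluxes are bounded in $W^{1,1}$ and converge uniformly to $w$, with $w=a+\int_0^x(f-v)$. Then $u_n'$ converges uniformly to $|w|^{p'-2}w$, which identifies the limit flux and passes \eqref{Weak1NN} to the limit. Finally, remove the $\epsilon$-regularization of $\gamma$ in the same way, using the same bounds.

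I expect the main obstacle to be the uniform $W^{1,p}$ bound, which requires the careful use of Lemma~\ref{Poinc} together with the mass balance \eqref{test1}.
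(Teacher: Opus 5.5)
Your argument is correct, but it takes a different route from the paper. The paper gives no proof of this statement; it imports it from \cite[Theorem 3.6]{AndIgMazTo07}, a general $L^1$ existence and uniqueness theory for quasilinear elliptic equations with monotone nonlinearities and nonlinear boundary conditions in bounded domains of $\R^N$. You instead give a self-contained one-dimensional proof. Uniqueness uses the $\frac{1}{k}T_k^+(u-\hat u)$ test, the same device the paper uses for Theorem~\ref{teocontprin001}. Existence comes from minimizing a convex coercive functional, and the limit flux is identified through the explicit primitive formula behind Remark~\ref{regularity}. The citation buys generality: multivalued graphs, nonlinear boundary terms, and higher dimensions, where $L^1$ data are genuinely delicate. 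Your route buys transparency and uses only the one-dimensional facts that drive the rest of the paper: the embedding $W^{1,p}(0,\ell)\subset C([0,\ell])$, and the flux being a primitive of $v-f$.

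It is in fact shorter than you made it. Because of that embedding, $u\mapsto\int_0^\ell fu$ is continuous on $W^{1,p}(0,\ell)$ for every $f\in L^1(0,\ell)$, and your coercivity estimate already bounds it by $\|f\|_{L^1}\|u\|_\infty$. Also, convexity of $j$ needs only $\gamma$ nondecreasing. So the minimization step alone gives existence for all $f\in L^1$ with the original $\gamma$. Both approximation steps (in $f$, and via $\gamma+\epsilon I$) can be dropped, and with them the uniform $W^{1,p}$ bound you expected to be the main obstacle.

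There are two small slips. First, since $(|u'|^{p-2}u')(0)=-a$, the limit flux is $w(x)=-a+\int_0^x(v-f)$; your formula gives $-w$, which is harmless. Second, if $\gamma$ is only nondecreasing (as your regularization suggests you allow), you only know $v-\hat v\ge 0$ on $\{u>\hat u\}$, not $>0$. The inequality $\int(v-\hat v)\chi_{\{u>\hat u\}}\le 0$ then gives $v=\hat v$ a.e.\ there. That is exactly the uniqueness the theorem asserts (of $v$), although $u$ itself need not be unique in that case.
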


 \subsection{The elliptic problem on a metric graph.} Let  $\G$ be a   connected and compact metric graph  (without loops and without multiple edges).
 For each   $\e\in \E(\G)$, let $\gamma_\e:\R \to \R$ be continuous and increasing with $\gamma_\e(\mathbb{R})=\mathbb{R}$ and $\gamma_\e(0)=0$. 
   We write $\overline{\gamma}$ to represent  the collection of $(\gamma_\e)_{\e \in \E(\G)}$ and 
we  write $[v]_\e=\gamma_\e([u]_\e)$  for all $\e\in  \E(\G)$ as
 $$v= \overline{\gamma}(u).$$
Take also $\overline{p}$ as in the previous Subsection~\ref{sec1747} with $1<p_\e<+\infty$ for all $\e\in \E(\G)$.

 We   define the upwind values of a general function $z:\G\to\R$, interpreted as fluxes on $\G$ at vertices $\v\in\V(\G)$, as follows. Let   $\v\in\V(\G)$ and $\e\in\E_\v(\G)$ be, we define
\begin{align}\label{eq:upwind_values}
\{z\}_\e(\v) :=
\begin{cases}
+[z]_\e(\ell_\e),\quad&\text{if }\v=\vf_\e,\\
-[z]_\e(0),\quad&\text{if }\v=\vi_\e,
\end{cases}
\end{align}
whenever these values exist in the sense of traces, e.g.,  if $z\in \widetilde W^{1,1}(\G)$. 

   From \eqref{IPR}, we have the the following {\it Green's formula} on a metric graph: 
\begin{equation}\label{GREENF0}
  \int_{\G}z' w\, dx  = -\int_{\G}z w'\, dx+ \sum_{\v\in \V(\G)} \left(\sum_{\e \in\E_{\v}(\G)} 
  \{z\}_\e(\v)\right)w(\v)\quad\hbox{for }  z,w\in W^{1,1}(\G).
\end{equation}

  Let  $  f\in L^{1}(\G)$ be,  and $ \omega=\{ \omega_\v: \v \in \V(\G) \} \subset \R$ (we will write in this simple way the set of fluxes $\omega_\v$ assigned to each vertex $\v$). Our   aim in this section is to solve the elliptic problem
\begin{equation}\label{24111058} 
 (P_{\omega}^f)\quad\left\{\begin{array}{l}
[v]_\e-\Delta_{p_\e} [u]_\e=[f]_\e\quad\hbox{in } (0,\ell_\e),\ \forall\, \e\in \E(\G),\\
\\{}
[v]_\e=\gamma_\e([u]_\e)\quad\hbox{a.e.},\ \forall\, \e\in \E(\G),
\\ \\ 
\hbox{$u$ continuous,}
\\ \\
\displaystyle \partial^{\overline{p}}_\nu u(\v)= {\omega_\v}   \quad\hbox{for all } \v\in \V(\G).
\end{array}\right.
\end{equation}
Here,  
 \begin{equation}\label{GREENF0op} \partial^{\overline{p}}_\nu u(\v):=\displaystyle\sum_{\e \in\E_{\v}(\G)} 
  \{z_{\overline{p}} \}_\e(\v), 
  \end{equation}
 being $z_{\overline{p}}$ defined as  
 $$[z_{\overline{p}}]_\e=  |[u]_\e'|^{p_\e-2}[u]_\e',$$
so that,   
$$\partial^{\overline{p}}_\nu u(\v)= {\omega_\v}$$  is a {\it generalized Neumann-Kirchhoff flux condition},
which of course includes the {\it standard
  Neumann-Kirchhoff condition}
 $$\partial^{\overline{p}}_\nu u(\v)
 =0 \quad\hbox{for all } \v\in \V(\G),$$
 by taking  $\omega_\v=0$ for all $\v$.

{\it A priori}, \eqref{GREENF0op}~should be read as a formal expression that has sense joint with Definition~\ref{ladeg1834dom}. But, in fact, it is not a formal expression because of regularity (see Remark~\ref{KirchTrue}).

  When the index of a vertex $\v$ is $2$, and the edges to which it belongs have the same diffusion rule, it is evident that if the flux $\omega_\v=0$ we can join these edges in an unique one, and forget this vertex. This is no the case if the diffusion of each edge is different, or if, even being the same, $\omega_\v\neq0$.

 As mentioned in the introduction, the study of this  problem  will allow us to solve, using the Crandall-Liggett Theorem, the evolution problem~\eqref{evpbintro1711}, but this elliptic problem, and the way we prove existence, is interesting by itself.

 Green's formula~\eqref{GREENF0} induces the following definition.

\begin{definition}\label{ladeg1834dom}\rm We say that $v\in L^{1}(\G)$ is a {\it weak solution} of problem $(P_{\omega}^f)$ if  there exists $u\in \mathcal{W}^{1,\overline{p}}(\G)$, with $v= \overline{\gamma}(u)$ a.e in $\G$, such that
\begin{equation}\label{Weak1}  \int_\G v\varphi\, dx+ \sum_{\e \in\E(\G)}\int_{0}^{\ell_\e}|[u]_\e'|^{p_\e-2}[u]_\e' [\varphi]_\e'\,   dx= \int_\G f \varphi \,dx + \sum_{\v\in \V(\G)}\omega_\v\varphi(\v) \quad \forall \, \varphi \in \mathcal{W}^{1,\overline{p}}(\G). \end{equation}
\end{definition}

 \begin{remark}\label{KirchTrue} \rm   Note that given $\e \in E(\G)$, if we take in \eqref{Weak1} as test function $\varphi$ such that $[\varphi]_\e = \phi$, with $\phi \in \mathcal{D}(]0, \ell_\e[)$ and $[\varphi]_{\e'} = 0$ for all $\e' \not= \e$, then we obtain that
\begin{equation}\label{distribu} 
[v]_\e  = (\vert [u]_\e' \vert^{p_\e -2} [u]_\e')' + [f]_\e \quad \hbox{in} \quad \mathcal{D}^{\prime}(]0, \ell_\e[). 
\end{equation}  As a consequence   we can consider  that $[u]_\e\in C^1([0,\ell_\e])$ (Remark~\ref{regularity}), and we do it along the paper.
Moreover, by Green's formula, the  Kirchhoff condition
 $$  \partial^{\overline{p}}_\nu u (\v)=   \omega_\v \quad \forall \v \in V(\G)$$
is   satisfied point--wise, and we will have the next Proposition~\ref{lemma2}.

 Observe also that, taking   as test function $\varphi =1$ in \eqref{Weak1}, we get the following {\it mass balance property}
\begin{equation}\label{MBC1}
\int_\G v = \int_\G f   + \sum_{\v\in \V(\G)}\omega_\v.
\end{equation}
\hfill$\blacksquare$
 \end{remark}

  Let us point out that that given a problem  $(P_{\omega}^f)$ on the metric graph $(\G = (\V(G), \E(\G))$, we can change the parametrization of the graph changing the function $f$. For instance, if in an edge $\e$ we have  $\v_1=\vi_\e$, $\v_2=\vf_\e$, and we change the parameterization to  $\v_2=\vi_\e$ and $\v_1=\vf_\e$, then we get the same problem by changing $f$ to $\tilde{f}$  where $[\tilde{f}]_\e(x):= [f]_\e(x - \ell_\e)$.  Observe that if $u$ is the solution for the original parametrization and $\tilde u$ is the solution for the new parametrization, then $[\tilde u]_\e(x)=[u]_\e(\ell_\e-x)$ and $[\tilde v]_\e(x)=[v]_\e(\ell_\e-x)$.

 \begin{proposition}\label{lemma2} If $v$ is a weak solution  to problem $(P_{\omega}^f)$, then, for all $\e \in E(\G)$, there exist $a_{\e},b_{\e} \in \R$  such that $[v_\e]$  is weak solution to problem ($P_{p_\e,a_\e,b_\e}^{\gamma_\e,[f]_\e}$).   Moreover, for any $\v \in V(\G)$, we have
\begin{equation}\label{wwg1333}\sum_{\e:\vi_\e=\v}a_\e+\sum_{\e:\vf_\e=\v}b_\e=\omega_\v.
\end{equation}
\end{proposition}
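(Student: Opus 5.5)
The plan is to read off the endpoint fluxes of each edge from the regularity in Remark~\ref{KirchTrue}, and then recover both the edgewise weak formulation and the vertex balance~\eqref{wwg1333} by testing~\eqref{Weak1} with suitable functions.

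Let $v$ be a weak solution with associated $u\in\mathcal{W}^{1,\overline{p}}(\G)$. By Remark~\ref{KirchTrue}, for each edge $\e$ we have~\eqref{distribu}. Hence $z_\e:=|[u]_\e'|^{p_\e-2}[u]_\e'\in W^{1,1}(0,\ell_\e)$ and $[u]_\e\in C^1([0,\ell_\e])$. We then define
$$a_\e:=-z_\e(0),\qquad b_\e:=z_\e(\ell_\e),$$
which, in the notation~\eqref{eq:upwind_values}, are $\{z_{\overline p}\}_\e(\vi_\e)$ and $\{z_{\overline p}\}_\e(\vf_\e)$.

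\emph{Step 1: edgewise weak formulation.} Fix $\phi\in W^{1,p_\e}(0,\ell_\e)$. Multiply~\eqref{distribu} by $\phi$ and integrate. Since $z_\e\in W^{1,1}$ and $\phi\in W^{1,1}$, the integration by parts formula~\eqref{IPR} gives
$$\int_0^{\ell_\e}[v]_\e\phi+\int_0^{\ell_\e}z_\e\phi'=\int_0^{\ell_\e}[f]_\e\phi+z_\e(\ell_\e)\phi(\ell_\e)-z_\e(0)\phi(0).$$
This is exactly~\eqref{Weak1NN} with $a=a_\e$ and $b=b_\e$. Together with $[v]_\e=\gamma_\e([u]_\e)$ a.e., it shows that $[v]_\e$ is a weak solution of $(P_{p_\e,a_\e,b_\e}^{\gamma_\e,[f]_\e})$.

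\emph{Step 2: Kirchhoff balance.} Take an arbitrary $\varphi\in\mathcal{W}^{1,\overline p}(\G)$ and apply the identity of Step~1 on each edge with $\phi=[\varphi]_\e$. Summing over all edges, and using the continuity of $\varphi$ at the vertices to regroup the endpoint terms by vertex, gives
$$\int_\G v\varphi+\sum_\e\int_0^{\ell_\e} z_\e[\varphi]_\e'=\int_\G f\varphi+\sum_{\v\in\V(\G)}\Big(\sum_{\e:\vi_\e=\v}a_\e+\sum_{\e:\vf_\e=\v}b_\e\Big)\varphi(\v).$$
Subtracting~\eqref{Weak1} yields
$$\sum_\v\Big(\sum_{\e:\vi_\e=\v}a_\e+\sum_{\e:\vf_\e=\v}b_\e-\omega_\v\Big)\varphi(\v)=0$$
for every such $\varphi$.

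To conclude~\eqref{wwg1333}, fix a vertex $\v_0$ and choose a continuous function, affine on each edge, with $\varphi(\v_0)=1$ and $\varphi(\v)=0$ at all other vertices. This function lies in $\mathcal{W}^{1,\overline p}(\G)$, and inserting it isolates the bracket at $\v_0$, which must therefore vanish.

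No step is a genuine obstacle. The only point requiring care is justifying the boundary traces of $z_\e$, which is exactly the $W^{1,1}$ regularity recorded in Remark~\ref{regularity}.
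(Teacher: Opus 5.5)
Your proposal is correct and follows the paper's proof. You make the same choice $a_\e=-z_\e(0)$, $b_\e=z_\e(\ell_\e)$ read off from Remark~\ref{KirchTrue}, then sum the edgewise identities with $\phi=[\varphi]_\e$ and compare with \eqref{Weak1}; the only difference is that you spell out two steps the paper leaves implicit, namely the integration by parts giving \eqref{Weak1NN} on each edge and the choice of a hat function $\varphi$ that isolates each vertex to conclude \eqref{wwg1333}.
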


\begin{proof}  
 By the previous remark we have that $[v]_\e$ is a weak solution to ($P_{p_\e,a_\e,b_\e}^{\gamma_\e,[f]_\e}$) where, for $[u]_\e=\gamma_\e{}^{-1}([v]_\e)$,
$$a_\e=-\left(\vert [u]_\e'\vert^{p-2} [u]_\e' \right)(0) \ \hbox {and } \ b_\e=\left(\vert [u]_\e'\vert^{p-2} [u]_\e' \right)(\ell_\e).$$ Now, for $\varphi\in \mathcal{W}^{1,\overline{p}}(\G)$,  consider the corresponding equations~\eqref{Weak1NN} for each $\e\in E(\G)$ with $\phi=[\varphi]_\e$. Then, adding such equations and comparing the result with~\eqref{Weak1}, we get
$$\sum_{\v\in \V(\G)}\omega_\v\varphi(\v)=\sum_{\v\in \V(\G)}\left(\sum_{\e:\vi_\e=\v}a_\e+\sum_{\e:\vf_\e=\v}b_\e\right)\varphi(\v).
$$
Since this is true for any $\varphi\in \mathcal{W}^{1,\overline{p}}(\G)$,~\eqref{wwg1333} follows.
\end{proof}

\begin{remark}\label{defdelplaplacianoenG}
\rm By defining the following $\overline{p}$-{\it Laplacian operator} $\Delta_{\overline{p}}^{\G,\omega}$ on functions $u\in \mathcal{W}^{1,\overline{p}}(\G)$ as:
$$\int_\G-\Delta_{\overline{p}}^{\G,\omega}u (x)\varphi(x)=\sum_{\e \in\E(\G)}\int_{0}^{\ell_\e}|[u]_\e'|^{p_\e-2}[u]_\e' [\varphi]_\e'\,   dx-\sum_{\v\in \V(\G)}\omega_\v\varphi(\v) \quad \forall \, \varphi \in \mathcal{W}^{1,\overline{p}}(\G),$$
Problem~\eqref{24111058} can be written as
$$  (P_{\omega}^f)\quad\left\{\begin{array}{l}
v-\Delta^{\G,\omega}_{\overline{p}}u=f\quad\hbox{in } \G,\\
\\{}
v=\overline{\gamma}(u)\quad\hbox{in } \G,
\\ \\
\hbox{$u$ continuous,}
\end{array}\right.$$
since  the definition of $\Delta^{\G,\omega}_{\overline{p}}$  includes the generalized Neumann-Kirchhoff flux condition.
\hfill$\blacksquare$
 \end{remark}

 We have uniqueness of weak solutions. 

 \begin{theorem}\label{teocontprin001} Let  $\G$ be a  connected and compact metric graph.
  Let $1 < p_\e < \infty$ be and   $\gamma_\e:\R\to \R$ be a continuous and increasing function with $\gamma_\e(\mathbb{R})=\mathbb{R}$ and $\gamma_\e(0)=0$ for all~$\e \in E(\G)$.
   For $v_i$ weak solutions  to $(P_{\omega^i}^{f_i})$, $i=1,2$, we have  
$$\int_{\G}(v_1-v_2)^+\le \int_{\G}(f_1-f_2)^+ + \sum_{\v\in\V(\G)}(\omega^1_\v-\omega^2_\v)^+\,.$$
\end{theorem}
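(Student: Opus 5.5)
The plan is the standard $L^1$-contraction argument: subtract the two weak formulations \eqref{Weak1} and test with a Lipschitz approximation of the sign of $(u_1-u_2)^+$. Let $u_i\in\mathcal{W}^{1,\overline{p}}(\G)$ be the functions associated with $v_i$, so $v_i=\overline{\gamma}(u_i)$. For $k>0$ define
$$T_k(r):=\min\{\max\{r,0\},k\}, \qquad \varphi_k:=\frac1k T_k(u_1-u_2).$$
Since $u_1-u_2\in\mathcal{W}^{1,\overline{p}}(\G)$ and $T_k$ is Lipschitz with $T_k(0)=0$, on each edge $[\varphi_k]_\e\in W^{1,p_\e}(0,\ell_\e)$. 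Continuity at the vertices is preserved, so $\varphi_k$ is an admissible test function in \eqref{Weak1} for both problems. Moreover $0\le\varphi_k\le 1$.

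Subtracting the two identities \eqref{Weak1} gives
$$\int_\G (v_1-v_2)\varphi_k+\sum_{\e}\int_0^{\ell_\e}\big(|[u_1]_\e'|^{p_\e-2}[u_1]_\e'-|[u_2]_\e'|^{p_\e-2}[u_2]_\e'\big)[\varphi_k]_\e' = \int_\G (f_1-f_2)\varphi_k+\sum_{\v}(\omega^1_\v-\omega^2_\v)\varphi_k(\v).$$
The second term on the left is nonnegative. Indeed, $[\varphi_k]_\e'=\frac1k([u_1]_\e'-[u_2]_\e')\chi_{\{0<[u_1]_\e-[u_2]_\e<k\}}$ a.e., and $r\mapsto|r|^{p-2}r$ is monotone, so we may drop that term. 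On the right, $0\le\varphi_k\le1$ gives the bound
$$\int_\G (f_1-f_2)^++\sum_\v(\omega^1_\v-\omega^2_\v)^+ .$$

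It remains to let $k\to0^+$. Pointwise, $\varphi_k\to\chi_{\{u_1>u_2\}}$, and dominated convergence applies since $v_i\in L^1(\G)$ and $|\varphi_k|\le1$. Hence
$$\int_\G (v_1-v_2)\chi_{\{u_1>u_2\}}\le \int_\G (f_1-f_2)^++\sum_\v(\omega^1_\v-\omega^2_\v)^+ .$$
Since each $\gamma_\e$ is increasing, $\{v_1>v_2\}\subset\{u_1>u_2\}$. Also, on $\{u_1>u_2\}$ we have $v_1\ge v_2$. Therefore the left-hand side equals $\int_\G(v_1-v_2)^+$, which is the claim.

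The only point requiring care, which I take to be the main (mild) obstacle, is the admissibility of $\varphi_k$ as a test function. This means checking the chain rule for the truncation $T_k$ in $W^{1,p_\e}$ on each edge, and that the composition keeps the common vertex values, which holds since $T_k$ acts pointwise on the continuous representatives. The sign and limit steps are routine. If $\gamma_\e$ were only nondecreasing, the identification of the limit would use exactly the monotonicity remark above: $v_1-v_2\ge0$ on $\{u_1>u_2\}$, and $\{v_1>v_2\}\subset\{u_1>u_2\}$.
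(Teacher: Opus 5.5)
Your proof is correct and follows the paper's argument: the same test function $\frac1k T_k(u_1-u_2)^+$ in both weak formulations, dropping the nonnegative $p_\e$-Laplacian terms by monotonicity, bounding the right-hand side using $0\le\varphi_k\le 1$, and then passing to the limit. Your limit $k\to0^+$ is the correct one (the paper's proof writes $k\to+\infty$, which is a slip), and your identification of the limit via $\{v_1>v_2\}\subset\{u_1>u_2\}$ spells out a step the paper leaves implicit.
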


\begin{proof} Let $v_i$  be weak solutions to $(P_{\omega}^f)$, $i=1,2$, and take $ u_i$ with $v_i= \overline{\gamma}(u_i)$.
    Take $\varphi=\frac{1}{k}T_k(u_1-u_2)^+$ as test function in the definition of solution for $i=1$ and $i=2$, and subtract the corresponding equations to get, by misleading the non-negative terms involved with $|[u_i]_\e'|^{p_\e-2}[u_i]_\e'$, 
    $$\int_\G(v_1-v_2)^+\frac{1}{k}T_k(u_1-u_2)^+\le \int_\G(f_1-f_2)^++\sum_{\v\in \V(\G)}(\omega^1_\v-\omega^2_\v)^+.$$
    Hence, taking limits as $k\to +\infty$, we get the result.
\end{proof}

 From now on, and for the sake of simplicity, we will say \lq\lq let $v=\overline{\gamma}(u)$ be a weak solution of $(P_{\omega}^f)$\rq\rq\ to give also implicitly the function $u$.

 In order to proof existence of weak solutions, the main problem is to the get the continuity at vertices.   To do that we will use the following  lemmas. In them we compare a solution to the problem  $(P_{\omega}^f)$  with the solution of a perturbed problem $(P_{\omega^\epsilon}^f)$ where the   fluxes on vertices are modified. Moreover, we establish continuity properties of these perturbed problems with respect to the perturbation.
We start by considering the simple case of a single edge, which, along with its proof and Proposition~\ref{lemma2}, is essential for the remainder of the arguments.

  \begin{lemma}\label{lemma3} Assume $\gamma:\mathbb{R}\to \mathbb{R}$ is a continuous and increasing function with $\gamma(\mathbb{R})=\mathbb{R}$ and $\gamma(0)=0$. We have the  statements:
 \begin{itemize}
 \item[1.]  Let $v_\epsilon=\gamma(u_\epsilon)$ be weak solution to problem ($P_{p,a,b+\epsilon}^{\gamma,f}$),  $\epsilon \in \mathbb{R}$. Then:
\\ (i)  For $\epsilon>0$, we have have that
 $$\begin{array}{l}\displaystyle 
 u_0 \leq u_\epsilon,
 \\ \\
 \displaystyle u_0(\ell) <u_\epsilon(\ell);
\end{array} $$ 
  for $\epsilon<0$, we have that 
  $$\begin{array}{l}\displaystyle u_\epsilon\le u_0,
   \\ \\
 \displaystyle 
u_\epsilon(\ell)<u_0(\ell).
\end{array} $$ 
 \\ (ii) Moreover, $\epsilon\mapsto u_\epsilon(x)$ is continuous in $\mathbb{R}$ for all fixed $x\in[0,\ell]$, and 
\begin{equation}\label{ran001}
\begin{array}{l}
\displaystyle \lim_{\epsilon \to  +\infty} u_\epsilon(\ell) =  +\infty,
 \\ \\
\displaystyle \lim_{\epsilon \to  -\infty} u_\epsilon(\ell) =  -\infty.
\end{array} \end{equation}

  \item[2.] Let $v_\epsilon=\gamma(u_\epsilon)$ be weak solution to problem ($P_{p,a+\epsilon,b}^{\gamma,f}$), with $\epsilon \in \mathbb{R}$. Then:
  \\ (i)  For $\epsilon>0$, we have have that
 $$\begin{array}{l}\displaystyle 
 u_0 \leq u_\epsilon,
 \\ \\
 \displaystyle u_0(0) <u_\epsilon(0);
\end{array} $$ 
  for $\epsilon<0$, we have that 
  $$\begin{array}{l}\displaystyle u_\epsilon\le u_0,
   \\ \\
\displaystyle 
u_\epsilon(0)<u_0(0).
\end{array} $$ 
 \\ (ii) Moreover, $\epsilon\mapsto u_\epsilon(x)$ is continuous in $\mathbb{R}$ for all fixed $x\in[0,\ell]$, and 
\begin{equation}\label{ran001dos001}
\begin{array}{l}
\displaystyle \lim_{\epsilon \to  +\infty} u_\epsilon(0) =  +\infty,
 \\
\\
\displaystyle \lim_{\epsilon \to  -\infty} u_\epsilon(0) =  -\infty.
\end{array} \end{equation}

\item[3.] Let   $v_\epsilon=\gamma(u_\epsilon)$ be weak solution of $(P_{p,a-\epsilon,b+\epsilon}^{\gamma,f})$ for $\epsilon\in \mathbb{R}$.
\\
(i) For $\epsilon>0$ we have
$$u_\epsilon(\ell)> u_0(\ell)$$ and
$$u_\epsilon(0)< u_0(0).$$
 For $\epsilon<0$ we have
$$u_\epsilon(\ell)< u_0(\ell),$$ and
$$u_\epsilon(0)> u_0(0).$$
(ii) Moreover, $\epsilon\mapsto u_\epsilon(\ell)$ and $\epsilon\mapsto u_\epsilon(0)$ are continuous in $\mathbb{R}$. 
 \end{itemize}
 \end{lemma}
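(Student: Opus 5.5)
Write $z_\epsilon:=|u_\epsilon'|^{p-2}u_\epsilon'\in C([0,\ell])$ for the flux of $u_\epsilon$ (Remark~\ref{regularity}). The plan is to prove part 1 by combining the $L^1$-contraction with the pointwise flux representation, then get part 2 by reflecting the interval, and part 3 by monotonicity along the two ends of the flux. Throughout I use that $\gamma$ is increasing, read as strictly increasing; this is what lets me recover $u$ from $v$.

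\textbf{Part 1(i): ordering.} Take $\epsilon>0$. The $T$-contraction argument in the proof of Theorem~\ref{teocontprin001}, specialised to one edge, uses the test function $\frac1k T_k(u_0-u_\epsilon)^+$ and gives
$$\int_0^\ell(v_0-v_\epsilon)^+\le (0-\epsilon)^+=0.$$
Hence $\gamma(u_0)\le\gamma(u_\epsilon)$, and so $u_0\le u_\epsilon$.

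\textbf{Part 1(i): strict inequality at $\ell$.} Suppose instead $u_0(\ell)=u_\epsilon(\ell)$. The mass balance \eqref{test1} gives
$$\int_0^\ell (v_\epsilon-v_0)=\epsilon>0,$$
so $v_\epsilon\neq v_0$ on a set of positive measure. By Remark~\ref{regularity},
$$z_\epsilon(x)-z_0(x)=\epsilon-\int_x^\ell (v_\epsilon-v_0)\,dy .$$
Let $x_0$ be the supremum of the points where $v_\epsilon-v_0>0$ on a set of positive measure in $(x_0-\delta,\ell)$; equivalently $v_\epsilon=v_0$ a.e. on $(x_0,\ell)$. On $(x_0,\ell]$ we then have $z_\epsilon-z_0=\epsilon>0$, so $u_\epsilon'>u_0'$ there, because $s\mapsto|s|^{p-2}s$ is strictly increasing. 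If $x_0<\ell$, integrating back from $\ell$ with $u_0(\ell)=u_\epsilon(\ell)$ gives $u_\epsilon<u_0$ on $(x_0,\ell)$, contradicting $u_0\le u_\epsilon$. If $x_0=\ell$, then $z_\epsilon-z_0>0$ near $\ell$ by continuity, and the same contradiction follows. The case $\epsilon<0$ is symmetric: swap the roles of $u_0$ and $u_\epsilon$.

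\textbf{Part 1(ii): continuity.} The contraction estimate for two parameters gives
$$\|v_\epsilon-v_{\epsilon'}\|_{L^1}\le|\epsilon-\epsilon'|.$$
The flux formula then shows that $z_\epsilon$ depends continuously on $\epsilon$ in sup norm, so $u_\epsilon'$ does too. It remains to control one value of $u_\epsilon$. By Lemma~\ref{Poinc}, $u_\epsilon$ is bounded in $W^{1,p}$ for $\epsilon$ in a bounded set. By \eqref{RC_001}, along a sequence $\epsilon_n\to\epsilon$ we get a uniformly convergent subsequence, and its limit has $\gamma$-image equal to $v_\epsilon$, hence the limit is $u_\epsilon$. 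Since every subsequence has this limit, $u_{\epsilon}\to u_{\epsilon_n}$ uniformly, which gives continuity at every $x$.

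\textbf{Part 1(ii): limits.} Suppose $u_\epsilon(\ell)$ stayed bounded as $\epsilon\to+\infty$; it is monotone by part (i). From $z_\epsilon=\epsilon-\int_x^\ell v_\epsilon$ together with $\int v_\epsilon=\int f+a+b+\epsilon$, the flux $z_\epsilon(x)$ stays large near $\ell$ unless the mass concentrates there. If $\int_x^\ell v_\epsilon\ge\epsilon/2$ for small $\ell-x$, then $u_\epsilon$ is large near $\ell$, contradicting monotonicity of $u_\epsilon$ and continuity. If not, then $u_\epsilon'\ge c\,\epsilon^{1/(p-1)}$ on an interval of fixed length, while $u_\epsilon\ge u_0$ is bounded below; again $u_\epsilon(\ell)\to\infty$. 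I expect this step to be the main obstacle, and I would make it rigorous by splitting $[0,\ell]$ at a point $x_1$ chosen using uniform integrability. The case $\epsilon\to-\infty$ is symmetric.

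\textbf{Parts 2 and 3.} Part 2 follows from part 1 via the reparametrisation $x\mapsto\ell-x$, as described before Proposition~\ref{lemma2}. For part 3, the mass stays constant, so I compare $u_\epsilon$ with $u_0$ through an intermediate problem. Write $w$ for the solution with data $(a-\epsilon,b)$. Then part 2 gives $w\le u_0$ with $w(0)<u_0(0)$, and part 1 gives $u_\epsilon\ge w$ with $u_\epsilon(\ell)>w(\ell)$. This chaining alone does not give the claimed inequalities, so I argue directly. From
$$\int(v_\epsilon-v_0)=0$$
and
$$z_\epsilon-z_0=\epsilon-\int_x^\ell(v_\epsilon-v_0),$$
$u_\epsilon-u_0$ cannot have the same sign everywhere unless it is zero, which is impossible since $\epsilon\neq0$. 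Using this, I show that $u_\epsilon-u_0$ changes sign at most once, going from negative to positive: at any crossing point, $u_\epsilon' > u_0'$ follows from the flux formula by a maximum-principle argument. This yields $u_\epsilon(0)<u_0(0)$ and $u_\epsilon(\ell)>u_0(\ell)$. Continuity follows as in part 1(ii).
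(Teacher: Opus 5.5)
Your arguments for Point 1(i), for the continuity in 1(ii), and the reflection that reduces Point 2 to Point 1 are sound. The continuity proof differs from the paper's: you use the $L^1$-Lipschitz bound $\|v_\epsilon-v_{\epsilon'}\|_{L^1}\le|\epsilon-\epsilon'|$ from Theorem~\ref{teocontprin001}, then the flux formula, then compactness. This is cleaner than the paper's monotone-convergence argument, and it transfers to Point 3(ii) with constant $2$.

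The genuine gap is the limit \eqref{ran001}. In your branch where $\int_x^\ell v_\epsilon\ge\epsilon/2$ with $\ell-x$ small, you only obtain some $y_\epsilon$ with $u_\epsilon(y_\epsilon)$ large. Nothing in the proposal transfers this to $u_\epsilon(\ell)$: monotonicity in $\epsilon$ and continuity give no lower bound on $u_\epsilon'$ between $y_\epsilon$ and $\ell$, whatever splitting point you choose. The missing idea is that in Point 1 the flux at $x=0$ is not perturbed. Hence $z_\epsilon(x)-z_0(x)=\int_0^x(v_\epsilon-v_0)\,dy\ge0$, using $v_\epsilon\ge v_0$. So $u_\epsilon'\ge u_0'$, and $x\mapsto u_\epsilon(x)-u_0(x)$ is nondecreasing; this is \eqref{cry001} in the paper. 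With this, no splitting is needed. Pick $x_\epsilon$ with $v_\epsilon(x_\epsilon)\ge\frac1\ell\bigl(\int_0^\ell v_0+\epsilon\bigr)$. Then $u_\epsilon(x_\epsilon)\to+\infty$, and $u_\epsilon(\ell)-u_0(\ell)\ge u_\epsilon(x_\epsilon)-\max u_0\to+\infty$.

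Point 3(i) is also not actually proved. You assert that $w:=u_\epsilon-u_0$ changes sign only once, from negative to positive. The mechanism you give for it, $u_\epsilon'>u_0'$ at every crossing, is false in general. Take $p=4$, $\gamma(r)=r$, $f=0$, $a=b=0$, so $u_0=0$. Then $u_\epsilon(x)=c\,\operatorname{sign}(x-\ell/2)\,(x-\ell/2)^2$ with $c=24^{-1/2}$ solves the problem with $\epsilon=c^3\ell^3$ (its flux is $8c^3|x-\ell/2|^3$), yet $u_\epsilon'(\ell/2)=0=u_0'(\ell/2)$ at the crossing.

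What does work is the sign coupling. Set $d:=z_\epsilon-z_0$. Then $w'$ has the sign of $d$, $d'=v_\epsilon-v_0$ has the sign of $w$, and $d(0)=d(\ell)=\epsilon>0$. Suppose $w(\ell)\le0$. Then $w<0$ just left of $\ell$. On any interval $(s,\ell)$ where $w<0$, $d$ decreases, so $d>\epsilon$ and $w'>0$ there. Hence $w$ cannot return to $0$ at some $s>0$, so $w<0$ on $(0,\ell)$, contradicting $\int_0^\ell(v_\epsilon-v_0)=0$. The bound $w(0)<0$ follows symmetrically.

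Alternatively, follow the paper. Let $(x_\epsilon,\ell)$ be the maximal interval on which $d>0$. If $x_\epsilon=0$, then $w$ is strictly increasing and zero mass forces $w(0)<0<w(\ell)$. If $x_\epsilon>0$, then $d(x_\epsilon)=0$, and Points 1 and 2 applied on $(x_\epsilon,\ell)$ and $(0,x_\epsilon)$ give both strict inequalities.
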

 
 \begin{proof}  {\bf Proof of Point 1}.    Take  $\epsilon>0$.  The monotonicity  $ u_0 \leq u_\epsilon$ is consequence of Theorem~\ref{teocontprin001} and the monotonicity of $\gamma$. 
 Let us now proof that $u_0(\ell) <u_\epsilon(\ell)$. 
By the mass balance property~\eqref{test1} we have
  \begin{equation}\label{epsilon1}
\int_0^{\ell} (v_\epsilon - v_0) = \epsilon.
  \end{equation}
We also have 
$$v_0 = \left(|u_0'|^{p-2}u_0' \right)' + f,$$
and
  $$v_\epsilon = \left(|u_\epsilon'|^{p-2}u_\epsilon' \right)' + f. $$
Hence, for $0 \leq x \leq \ell$, having in mind the monotonicity and \eqref{epsilon1}, we have
$$0\le \int_0^x \left(  |u_\epsilon'|^{p-2}u_\epsilon'  -  |u_0'|^{p-2}u_0'   \right)'
= \int_0^x (v_\epsilon - v_0) \nearrow \epsilon \quad \hbox{as } x\nearrow \ell.$$
Hence,
$$0\le  (|u_\epsilon'|^{p-2}u_\epsilon') (x) -  (|u_0'|^{p-2}u_0') (x) - \left((|u_\epsilon'|^{p-2}u_\epsilon') (0) -  (|u_0'|^{p-2}u_0') (0) \right) \nearrow \epsilon \quad \hbox{as } x\nearrow \ell.$$
Now, 
$$(|u_\epsilon'|^{p-2}u_\epsilon') (0) = a \quad \hbox{and} \quad (|u_0'|^{p-2}u_0') (0) = a,$$
therefore, we have
\begin{equation}\label{17531510}
0 \leq (|u_\epsilon'|^{p-2}u_\epsilon') (x) -  (|u_0'|^{p-2}u_0') (x) \nearrow \epsilon \quad \hbox{as } x\nearrow \ell.
\end{equation}
Then, since the function $\varrho(r):= \vert r \vert^{p-2}r$ is   increasing for $p >1$, we obtain, from the first inequality in~\eqref{17531510},
$$u_0'(x) \leq u_\epsilon'(x),$$
 and integrating here, for $0\le x\le y\le \ell,$ 
$$u_0(y) - u_0(x)\leq u_\epsilon(y) -  u_\epsilon(x),$$
so,    
\begin{equation}\label{cry001}
0 \leq  u_\epsilon(x)  - u_0(x)\leq u_\epsilon(y) - u_0(y)\le u_\epsilon(\ell) - u_0(\ell).
\end{equation}
 Hence, if $u_\epsilon(\ell) - u_0(\ell)=0$, then $u_\epsilon=u_0$, which is false. Then we have $u_0(\ell) <u_\epsilon(\ell)$.

 The proof for $\epsilon<0$ is similar, and we have proved (i). 
 
 Let us now prove  (ii). By~\eqref{epsilon1}, there exists a  sequence $\epsilon_n\to 0$ such that $v_{\epsilon_n}\to v_0$ almost everywhere. Then, by~\eqref{cry001}, 
\begin{equation}\label{3.12.2}u_\epsilon(x)\to u_0(x)\quad\hbox{as }\epsilon\to 0^+,
\end{equation}
for any $x\in[0,\ell)$. 
Let us see that $$u_\epsilon(\ell)\to u_0(\ell)\quad\hbox{as }\epsilon\to 0.$$ 
From~\eqref{17531510} we also have: 
\begin{equation}\label{conv0003}
u_\epsilon'(x)\le \varrho^{-1}(\varrho(u_0'(x))+\epsilon).
\end{equation}
 Hence, integrating between $0$ and $\ell$,
 $$ u_\epsilon(\ell) - u_\epsilon(0)\le \int_0^\ell \varrho^{-1}(\varrho(u_0'(x))+\epsilon)dx,$$
 and therefore,
 \begin{equation}\label{12061610}
0\le u_\epsilon(\ell) - u_0(\ell)\le \int_0^\ell \varrho^{-1}(\varrho(u_0'(x))+\epsilon)dx+u_\epsilon(0)-u_0(\ell).
\end{equation}
By the  Monotone Convergence Theorem, 
 $$\lim_{\epsilon\to 0}\int_0^\ell \varrho^{-1}(\varrho(u_0'(x))+\epsilon)dx=\int_0^\ell u_0'(x)dx=u_0(\ell)-u_0(0),$$
we moreover have shown in~\eqref{3.12.2} that  $$u_\epsilon(0)\to u_0(0),$$   hence
 $$ 
 \displaystyle\lim_{\epsilon\to 0}\int_0^\ell \varrho^{-1}(\varrho(u_0'(x))+\epsilon)dx+u_\epsilon(0)-u_0(\ell)=0,
 $$
 then, from~\eqref{12061610},
 $$u_\epsilon(\ell)\to u_0(\ell)\quad\hbox{as }\epsilon\to 0^+.$$
 
The limit when  $\epsilon\to 0^-$ is similar.   Hence, we have  the continuity of $\epsilon\mapsto u_\epsilon(\ell)$ at $0$. Therefore,  fixed $x\in[0,\ell]$, having in mind~\eqref{cry001},   we have  the continuity of $\epsilon\mapsto u_\epsilon(x)$ at $0$. Now, it is easy to see that we have the continuity of $\epsilon\mapsto u_\epsilon(x)$ at any point~$\epsilon$.

Let us finally prove the  statements in~\eqref{ran001}.   Take $\epsilon>0$.      By \eqref{epsilon1},
$$\int_0^\ell v_\epsilon(x)dx=\int_0^\ell v_0(x)dx+\epsilon,$$ and from here, there exists $0\le x_\epsilon\le\ell$ such that 
$$\lim_{\epsilon\to+\infty}v_\epsilon(x_\epsilon)=+\infty,$$
hence, 
$$\lim_{\epsilon\to+\infty}u_\epsilon(x_\epsilon)=+\infty.$$
Consequently,   since  by~\eqref{cry001},
$$  u_\epsilon(x_\epsilon) - u_0(x_\epsilon)\le u_\epsilon(\ell) - u_0(\ell),$$
and $u_0(x_\epsilon)$ is bounded, we get 
$$\lim_{\epsilon\to+\infty}u_\epsilon(\ell)=+\infty.$$
The other limit  follows in the same way.

The proof of {\bf Point 2.} is similar.

 Let us now prove {\bf Point 3.}  Take $\epsilon>0$. 
In this case   \begin{equation}\label{1746411new0}
\int_0^\ell (v_\epsilon-v_0)=0.
\end{equation}

 Since
$$(|u_\epsilon'|^{p-2}u_\epsilon')(\ell)-(|u_0'|^{p-2}u_0')(\ell)=\epsilon,$$
 and $x\mapsto (|u_\epsilon'|^{p-2}u_\epsilon')(x)-(|u_0'|^{p-2}u_0')(x)$ is continuous, we have that
$$(|u_\epsilon'|^{p-2}u_\epsilon')(x)-(|u_0'|^{p-2}u_0')(x)>0$$
on an interval $(x_\epsilon,\ell)$. Let us consider that this interval is the  the largest one (with $\ell$ fixed) for such property.  Let us now  consider the following two cases, (a) and (b).

\noindent (a) If $x_\epsilon=0$ then, since
  $$(|u_\epsilon'|^{p-2}u_\epsilon')(0)-(|u_0'|^{p-2}u_0')(0)=\epsilon,$$  
  we have that
$$(|u_\epsilon'|^{p-2}u_\epsilon')(x)-(|u_0'|^{p-2}u_0')(x)>0\quad\forall x\in[0,\ell].$$
Hence 
\begin{equation}\label{arre002new} u_0'(x) <u_\epsilon'(x)\quad\forall x\in[0,\ell],
\end{equation}
and integrating  
\begin{equation}\label{arre001new}   u_\epsilon(x)-u_0(x)< u_\epsilon(y)-u_0(y)\quad\forall 0\le x<y\le\ell.
\end{equation}
Now, if  $u_\epsilon(\ell)\le u_0(\ell)$,  by~\eqref{arre001new},
$$u_\epsilon(x)-u_0(x)\le 0\quad\forall 0\le x \le\ell,$$
and also, since $\gamma$ is increasing, 
$$v_\epsilon(x)-v_0(x)\le 0 \quad\forall 0\le x \le\ell.$$
Then, by~\eqref{1746411new0},
we have that $v_\epsilon=v_0$, which is impossible. Therefore 
$$u_\epsilon(\ell) >u_0(\ell).$$
And similarly we have that 
$$u_\epsilon(0) <u_0(0).$$
Let $\widehat x_\epsilon>0$  be such that $u_\epsilon(x)-u_0(x)>0$ for  $\widehat x_\epsilon<x\le \ell$
and  
\begin{equation}\label{mue02001}u_\epsilon(\widehat x_\epsilon)=u_0(\widehat x_\epsilon)
\end{equation}
Then, 
$$(|u_\epsilon'|^{p-2}u_\epsilon')(x)-(|u_0'|^{p-2}u_0')(x)-\left((|u_\epsilon'|^{p-2}u_\epsilon')(\widehat x_\epsilon)-(|u_0'|^{p-2}u_0')(\widehat x_\epsilon)\right)$$
$$= \int_{\widehat x_\epsilon}^x(v_\epsilon-v_0) \nearrow \epsilon-\left((|u_\epsilon'|^{p-2}u_\epsilon')(\widehat x_\epsilon)-(|u_0'|^{p-2}u_0')(\widehat x_\epsilon)\right) \quad \hbox{as } x\nearrow \ell.$$
Therefore,
$$(|u_\epsilon'|^{p-2}u_\epsilon')(x)-(|u_0'|^{p-2}u_0')(x)\le  \epsilon \quad \hbox{for } \widehat x_\epsilon<x<\ell.$$
From here, and using~\eqref{mue02001},
\begin{equation}\label{simcas01001}
0\le u_\epsilon(\ell)-u_0(\ell)\le \int_{\widehat x_\epsilon}^\ell\varrho^{-1}\left(\varrho(u_0'(x))+\epsilon\right)+u_0(\widehat x_\epsilon)-u_0(\ell).
\end{equation}

\noindent (b) If 
  $0<x_\epsilon<\ell$, then we have that
$$(|u_\epsilon'|^{p-2}u_\epsilon')(x_\epsilon)=(|u_0'|^{p-2}u_0')(x_\epsilon).$$
So, if
we apply the first point proved on this interval $(x_\epsilon,\ell)$,  we get that $u_0(x)\le u_\epsilon(x)$ in such interval and $u_0(\ell)< u_\epsilon(\ell)$; and we get that $u_0(x)\ge u_\epsilon(x)$ on the interval $(0,x_\epsilon)$ and $u_0(0)> u_\epsilon(0)$.
Observe that at point $x_\epsilon$, by continuity,
\begin{equation}\label{tkae001} u_\epsilon(x_\epsilon)=u_0(x_\epsilon).
\end{equation}
 Moreover, the proof of  the first point gives us the following fact  (using~\eqref{tkae001}): 
\begin{equation}\label{simcas01002}0\le u_\epsilon(\ell)-u_0(\ell)\le \int_{x_\epsilon}^\ell\varrho^{-1}\left(\varrho(u_0'(x))+\epsilon\right)+u_0(x_\epsilon)-u_0(\ell).
\end{equation}

 Observe that in both cases, (a) or (b), we obtain the similar inequalities~\eqref{simcas01001} and~\eqref{simcas01002}. Let us rewrite $x_\epsilon$ as $\widehat x_\epsilon$ in~\eqref{simcas01002}.

Then, like in the proof of the first point,  we get that
$$\lim_{\epsilon\to 0^+}u_\epsilon(\ell)=u_0(\ell).$$
 Indeed, we have that there exists $\widehat x_{\epsilon_n}\to x_0\in[0,\ell]$ for a sequence $\epsilon_n\to 0^+$. 
 Now,
$$
\int_{\widehat x_{\epsilon_n}}^\ell\varrho^{-1}\left(\varrho(u_0'(x))+\epsilon_n\right)=\int_{\widehat x_{\epsilon_n}}^{x_0}\varrho^{-1}\left(\varrho(u_0'(x))+\epsilon_n\right)+\int_{x_0}^\ell\varrho^{-1}\left(\varrho(u_0'(x))+\epsilon_n\right).
$$
Here, the fist integral on the right hand-side converges to $0$ as $\epsilon_n$ goes to $0$ since  the function $\varrho^{-1}\left(\varrho(u_0'(x))+\epsilon_n\right)$ is bounded uniformly in $n$, and, using this boundedness again,  the Dominate Convergence Theorem gives that the second integral on the right hand-side converges to $u_0(\ell)-u_0(x_0)$.  Then, we have
$$u_{\epsilon_n}(\ell)\to u_0(\ell)\quad\hbox{as }\epsilon_n\to 0,$$ 
and consequently, by the monotonicity,
$$\lim_{\epsilon\to 0^+}u_\epsilon(\ell)=u_0(\ell).$$

Similarly we get $\lim_{\epsilon\to 0^-}u_\epsilon(\ell)=u_0(\ell).$ 
Hence we have the  continuity of $\epsilon\mapsto u_\epsilon(\ell)$  at~$0$, and then at any point.
\end{proof}

\begin{remark}\label{030326_001rem}\rm  
 Observe that from the above lemma we have that if $v_{\delta,\epsilon}=\gamma(u_{\delta,\epsilon})$ is weak solution to problem ($P_{p,a-\delta,b+\epsilon}^{\gamma,f}$),  with $0\le\delta<\epsilon$ or $0<\delta\le\epsilon$,  then
 $$u_{\delta,\epsilon}(\ell)> u_{0,0}(\ell),$$
 Indeed, from Point 3 in the above lemma  we have that
 $$u_{\delta,\epsilon}(\ell)\ge u_{\delta,\delta}(\ell),$$
 which is an strict inequality if $\epsilon>\delta$,
 and from Point 1 we have that
 $$u_{\delta,\delta}(\ell)\ge u_{0,0}(\ell),$$
 which is an strict inequality if $\delta>0.$ 
 \hfill $\blacksquare$
\end{remark}

\begin{lemma}\label{lemma5}  Let  $\G$ be a  connected and compact metric graph with $$\V(\G) = \{ \v_1, \ldots, \v_{n-1},\v_n \}.$$
Assume $\gamma_\e:\mathbb{R}\to \mathbb{R}$ is a  continuous and increasing function with 
$\gamma_\e(\mathbb{R})=\mathbb{R}$ and $\gamma_\e(0)=0$ for all $\e \in \E(\G)$, and, for $\omega=\{\omega_{\v_1}, \ldots, \omega_{\v_{n-1}},\omega_{\v_n}  \} \subset \R$ and $\epsilon\in \R$,  set $\omega^\epsilon$ be such that $$\omega^\epsilon_{\v_i}=\omega_{\v_i}, \ i=1,2,...,n-1,$$ 
$$\omega^\epsilon_{\v_n}=\omega_{\v_n}+\epsilon.$$ 
Let $v_\epsilon = \overline{\gamma}(u_\epsilon)$ be weak solution  to problem $(P_{\omega^\epsilon}^f)$,   $\epsilon\in \mathbb{R}$. 
 Then, 
 $$u_0 \leq u_\epsilon \quad \hbox{if} \quad  \epsilon \ge 0, \quad \hbox{and} \quad  u_\epsilon \leq u_0 \quad \hbox{if} \quad \epsilon \le 0.$$
Moreover
 $$ \epsilon\mapsto u_\epsilon(\v_n) \ \hbox{ is increasing and continuous.}$$ 
\end{lemma}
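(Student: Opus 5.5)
The plan is to obtain the comparison part directly from Theorem~\ref{teocontprin001}, and then to prove continuity of $\epsilon\mapsto u_\epsilon(\v_n)$ by combining a uniform modulus of continuity on each edge with the $L^1$-contraction of $v_\epsilon$ in $\epsilon$.

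\textbf{Comparison.} For $\epsilon\ge0$, apply Theorem~\ref{teocontprin001} with $v_1=v_0$, $v_2=v_\epsilon$ and the same $f$. Since $\omega^0_\v-\omega^\epsilon_\v\le 0$ at every vertex, we get
$$\int_\G (v_0-v_\epsilon)^+\le 0,$$
hence $v_0\le v_\epsilon$ a.e. Passing from $v$ to $u$ needs care, because $\gamma_\e$ is only increasing (not necessarily strictly). I would redo the proof of Theorem~\ref{teocontprin001} with the test function $\frac1k T_k(u_0-u_\epsilon)^+$. The flux term
$$\sum_\e\int\big(\varrho_\e(u_0')-\varrho_\e(u_\epsilon')\big)\tfrac1k T_k(u_0-u_\epsilon)^+{}'$$
is nonnegative, the $v$-term is nonnegative, and the vertex term is $-\epsilon\frac1kT_k(u_0-u_\epsilon)^+(\v_n)\le0$. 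Hence the flux term vanishes, so $(u_0-u_\epsilon)^+$ is constant on each edge. By continuity at the vertices and connectedness of $\G$, it equals a single constant $c\ge0$ on all of $\G$.

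If $c>0$, then $u_0=u_\epsilon+c$ everywhere, so $u_0'=u_\epsilon'$ and the two fluxes coincide. Subtracting the two weak formulations gives
$$\int_\G (v_0-v_\epsilon)\varphi=-\epsilon\varphi(\v_n)\quad\text{for all }\varphi.$$
Taking $\varphi=1$ gives $\int_\G(v_0-v_\epsilon)=-\epsilon$. Since $v_0\ge v_\epsilon$ (by monotonicity of $\gamma_\e$ and $u_0>u_\epsilon$), this forces $\epsilon=0$ and $v_0=v_\epsilon$, so the solutions agree in that case too. Therefore $u_0\le u_\epsilon$. The case $\epsilon\le0$ is symmetric, and the same argument applied between any $\epsilon_1<\epsilon_2$ gives that $\epsilon\mapsto u_\epsilon(\v_n)$ is nondecreasing. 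If strict increase is required, I would combine Proposition~\ref{lemma2} with Point~1 of Lemma~\ref{lemma3} on an edge incident to $\v_n$ whose flux changes; some incident edge must absorb a positive part of $\epsilon$, by the mass balance~\eqref{MBC1}.

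\textbf{Continuity.} By Theorem~\ref{teocontprin001}, $\|v_\epsilon-v_{\epsilon'}\|_{L^1(\G)}\le|\epsilon-\epsilon'|$. On each edge, the flux satisfies
$$\varrho_\e([u_\epsilon]'_\e)(x)=\varrho_\e([u_\epsilon]'_\e)(0)+\int_0^x([v_\epsilon]_\e-[f]_\e).$$
I would show that these fluxes are bounded uniformly for $\epsilon$ in a compact set. Boundary fluxes are controlled through~\eqref{wwg1333} together with the mass bound; alternatively, one can use energy estimates from~\eqref{Weak1} with $\varphi=u_\epsilon$ combined with Lemma~\ref{Poinc}. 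This gives uniform $W^{1,\infty}$, hence equicontinuous, bounds on $u_\epsilon$.

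Now take $\epsilon_n\to\epsilon$. By monotonicity, $u_{\epsilon_n}$ converges monotonically, and by Arzelà–Ascoli (or Theorem~\ref{compact}) it converges uniformly to some $u$. One can pass to the limit in~\eqref{Weak1}: $v_{\epsilon_n}\to v_\epsilon$ in $L^1$, and the fluxes converge pointwise via the integral representation above. Hence $u$ is a solution of $(P^f_{\omega^\epsilon})$. Its $v$-component is therefore $v_\epsilon$ by uniqueness.

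\textbf{Main obstacle.} The hard step is identifying $u=u_\epsilon$, since $\gamma_\e$ need not be injective and $u$ is not determined by $v$ alone. I would first try the following: $u$ and $u_\epsilon$ have the same $v$ and the same data, so the flux comparison above shows that $u-u_\epsilon$ is a global constant $c$. If $c\neq0$, then $\gamma_\e$ is constant on the range between the two, which is compatible with the equation. So I expect to need either that the specific solution $u_\epsilon$ is selected consistently, or an argument using one-sided limits. Comparison gives $u_{\epsilon_n}\le u_\epsilon$ from below and $\ge u_\epsilon$ from above, and then uniqueness of the flux should pin $c=0$ in the limit through the vertex value. This is the delicate point I expect to spend most effort on.
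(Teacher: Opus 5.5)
Your route is sound and genuinely different from the paper's. However, the step you single out as the main obstacle is not an obstacle under the paper's hypotheses, and your proposed way around it could not work in the generality you had in mind.

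In this paper ``increasing'' means strictly increasing. The paper writes $[u]_\e=\gamma_\e^{-1}([v]_\e)$ and $u(t)=\overline{\gamma}^{-1}(v(t))$, and in the proof of Lemma~\ref{lemma3} it passes from a.e.\ convergence of $v_{\epsilon_n}$ to that of $u_{\epsilon_n}$. Each $\gamma_\e$ is also continuous and onto, so it is a homeomorphism of $\R$ and $u$ is determined by $v$. Your limit therefore satisfies $\overline{\gamma}(u)=v_\epsilon$, hence $u=u_\epsilon$. Since every subsequence has this same limit, $u_{\epsilon_n}\to u_\epsilon$ uniformly on $\G$, in particular at $\v_n$. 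Your comparison step likewise collapses to $v_0\le v_\epsilon\Rightarrow u_0\le u_\epsilon$.

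Without strictness the lemma is actually false. Take a single edge with $f=0$, $\omega=0$ and $\gamma(r)=\mathrm{sign}(r)(|r|-1)^+$. Every constant in $[-1,1]$ is an admissible $u_0$. Your own $T_k$-comparison, which is valid for any choice of $u_0$, gives $u_\epsilon\ge1$ for $\epsilon>0$ and $u_\epsilon\le-1$ for $\epsilon<0$. So $\epsilon\mapsto u_\epsilon(\v_n)$ jumps at $0$ whatever $u_0$ is chosen, and no one-sided-limit argument can pin the constant.

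\textbf{Comparison with the paper.} The paper proves continuity at $0^+$ locally and quantitatively on an edge at $\v_n$. It extracts $\epsilon_m\to0$ with $u_{\epsilon_m}\to u_0$ at a point $x_{0,i}$. A case analysis then finds an incident edge whose flux increment at $\v_n$ lies in $[0,\epsilon]$, which gives $u_\epsilon'\le\varrho^{-1}(\varrho(u_0')+\epsilon)$ on $(x_{0,i},\ell_i)$. It integrates, uses monotone convergence, and upgrades from the subsequence by monotonicity in $\epsilon$. Your compactness-plus-uniqueness argument avoids this case analysis and handles every $\epsilon$ at once. It also gives more, namely continuity of $\epsilon\mapsto u_\epsilon$ into $C(\G)$, at the cost of a global a priori bound.

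\textbf{Three points to fix.} First, obtain that bound from the energy estimate, not from \eqref{wwg1333}. On a graph with a cycle, the Kirchhoff relations and the edge mass balances do not bound the end fluxes, because adding a constant circulation around the cycle changes none of them.

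Second, the energy route works as follows. Test with $u_\epsilon$, use $v_\epsilon u_\epsilon\ge0$, apply Lemma~\ref{Poinc} on each edge (with $\|v_\epsilon\|_{L^1}\le\|v_0\|_{L^1}+|\epsilon|$), and use the embedding into $C(\G)$. This gives a uniform $\mathcal{W}^{1,\overline{p}}(\G)$ bound. The flux on each edge oscillates by at most $\|[v_\epsilon]_\e\|_{L^1}+\|[f]_\e\|_{L^1}$, so this yields the uniform flux bound you need to pass to the limit.

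Third, for strict increase, Point~1 of Lemma~\ref{lemma3} does not apply verbatim, since the flux at the far end of an edge at $\v_n$ may change as well. Argue as the paper does. Since $v_\epsilon\ge v_0$, the difference $\varrho(u_\epsilon')-\varrho(u_0')$ is nondecreasing towards $\v_n$. The Kirchhoff condition at $\v_n$ (rather than the mass balance) gives an incident edge where this difference is positive at $\v_n$. So $u_\epsilon-u_0\ge0$ is strictly increasing near $\v_n$ and hence positive there.
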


\begin{proof}     
Let $\E_{\v_n}(\G)=\{\e_i:i=1,2,...,k\}$, and suppose that $\v_n=\vf_{\e_i}$ for all $i\in\{1,2,...,k\}$. Let us call $[v_\epsilon]_{\e_i}=v_{\epsilon,i}$ and $[u_\epsilon]_{\e_i}=u_{\epsilon,i}$, $\ell_{\e_i}=\ell_i$,  and $p_{\e_i}=p_i$, for each $i\in\{1,2,...,k\}$. 

Take $\epsilon>0$. By Theorem~\ref{teocontprin001} we have that $$v_\epsilon\ge v_0,$$ and consequently we have 
$$u_\epsilon\ge u_0,$$ which proves the first statement given.
In particular we have that
\begin{equation}\label{nmced001}
   0\le \lim_{\epsilon\to 0^+}\left(u_\epsilon(\v_n)-u_0(\v_n)\right).
\end{equation}
We want to show that this limit is $0$, and that $u_\epsilon(\v_n)> u_0(\v_n).$

By the mass balance property we have that 
\begin{equation}\label{conv0001}
  0\le\int_0^{\ell_i} (v_{\epsilon,i} - v_{0,i})\le
 \sum_{i=1}^k\int_0^{\ell_i} (v_{\epsilon,i} - v_{0,i})\le \epsilon,\quad i=1,2,...,k.
\end{equation}

By Proposition~\ref{lemma2} we have that, for each $i\in\{1,2,...,k\}$,
\begin{equation}\label{conv0002conx}
\begin{array}{c}
\displaystyle
(|u_{\epsilon,i}'|^{p_i-2}u_{\epsilon,i}') (x) -  (|u_{0,i}'|^{p_i-2}u_{0,i}') (x)   
\\ \\
\displaystyle=  \int_y^{x}
(v_{\epsilon,i} - v_{0,i})+  (|u_{\epsilon,i}'|^{p_i-2}u_{\epsilon,i}') (y) -  (|u_{0,i}'|^{p_i-2}u_{0,i}') (y)\quad\forall 0\le y\le x\le\ell_i.
\end{array}
\end{equation}

By the hypothesis $\omega^\epsilon_{\v_n}=\omega_{\v_n}+\epsilon$, then we have that
\begin{equation}\label{100326001}
\sum_{i=1}^k(|u_{\epsilon,i}'|^{p_i-2}u_{\epsilon,i}') (\ell_i) -  (|u_{0,i}'|^{p_i-2}u_{0,i}') (\ell_i)=\epsilon,
\end{equation}
Hence, there exists an index $i$ such that  $(|u_{\epsilon,i}'|^{p_i-2}u_{\epsilon,i}') (\ell_i) -  (|u_{0,i}'|^{p_i-2}u_{0,i}')(\ell_i) >0$.   Then, arguing as in the proof of Lemma~\ref{lemma3}, since $u_\epsilon\ge u_0$, we get that
$$      u_\epsilon(\v_n)> u_0(\v_n).$$
The growth for $\epsilon<0$ is proved similarly.
 
Let us see that there there is a positive sequence $\epsilon_{m}\to 0$ such that 
 $$\lim_{j}\left(u_{\epsilon_{m}}(\v_n)-u_0(\v_n)\right)=0.$$
By  the monotonicity, this is enough to prove $\lim_{\epsilon\to 0^+}\left(u_\epsilon(\v_n)-u_0(\v_n)\right)=0$.

Let $\epsilon>0$ be. Since, for each $i\in\{1,2,...,k\}$, $\displaystyle 0\le\int_0^{\ell_i} (v_{\epsilon,i} - v_{0,i})\le \epsilon$, we have that there exists  a positive sequence $  \epsilon_m\to 0$ such that  $$\lim_m v_{\epsilon_m,i}(x)= v_{0,i}(x)\quad\hbox{for a.e. } x\in (0,\ell_i),\ \hbox{for all } i\in\{1,2,...,k\},$$
and also
$$\lim_m u_{\epsilon_m,i}(x)= u_{0,i}(x)\quad\hbox{for a.e. } x\in (0,\ell_i), \ \hbox{for all } i\in\{1,2,...,k\}.$$
For each $i\in\{1,2,...,k\}$, fix $x_{0,i}\in (0,\ell_i)$ such that 
$$\lim_m u_{\epsilon_m,i}(x_{0,i})= u_{0,i}(x_{0,i}).$$

From~\eqref{conv0002conx}, in particular, 
\begin{equation}\label{conv0002}
\begin{array}{c}
\displaystyle
(|u_{\epsilon,i}'|^{p_i-2}u_{\epsilon,i}') (\ell_i) -  (|u_{0,i}'|^{p_i-2}u_{0,i}') (\ell_i)   
\\ \\
\displaystyle= \int_0^{\ell_i} (v_{\epsilon,i} - v_{0,i})+  (|u_{\epsilon,i}'|^{p_i-2}u_{\epsilon,i}') (0) -  (|u_{0,i}'|^{p_i-2}u_{0,i}') (0).
\end{array}
\end{equation}
Then, adding these expressions from $i=1$ to $i=k$,  since $\omega^\epsilon_{\v_n}=\omega_{\v_n}+\epsilon$, we have
$$\epsilon   
=\sum_{i=1}^k\int_0^{\ell_i} (v_{\epsilon,i} - v_{0,i})+ \sum_{i=1}^k\left( (|u_{\epsilon,i}'|^{p_i-2}u_{\epsilon,i}') (0) -  (|u_{0,i}'|^{p_i-2}u_{0,i}') (0)\right).$$
Then, by~\eqref{conv0001},  
\begin{equation}\label{conv0005}
\sum_{i=1}^k\left( (|u_{\epsilon,i}'|^{p_i-2}u_{\epsilon,i}') (0) -  (|u_{0,i}'|^{p_i-2}u_{0,i}') (0)\right)\ge 0.
\end{equation}

Write  $\epsilon=\epsilon_m$  for  simplicity. On account of~\eqref{conv0005}, we have two possible situations,  (i) or~(ii):

  \noindent  (i)     there is an $i\in\{1,2,...,k\}$ such that 
$$ (|u_{\epsilon,i}'|^{p_i-2}u_{\epsilon,i}') (0) -  (|u_{0,i}'|^{p_i-2}u_{0,i}') (0)< 0;$$

\noindent (ii) for all the  indices $i$,
$$ (|u_{\epsilon,i}'|^{p_i-2}u_{\epsilon,i}') (0) -  (|u_{0,i}'|^{p_i-2}u_{0,i}') (0)\ge  0.$$ 

In the case   (i),  by~\eqref{conv0002} and~\eqref{conv0001},
\begin{equation}\label{satis2701002}0\le (|u_{\epsilon,i}'|^{p_i-2}u_{\epsilon,i}') (\ell_i) -  (|u_{0,i}'|^{p_i-2}u_{0,i}') (\ell_i)\le  \epsilon,
\end{equation}
Now,
$$
0\le \int_{x_{0,i}}^x(v_{\epsilon,i}-v_{0,i})\nearrow \int_{x_{0,i}}^{\ell_i}(v_{\epsilon,i}-v_{0,i}) \quad\hbox{as }x\nearrow \ell_i,
$$
hence,
$$
(|u_{\epsilon,i}'|^{p_i-2}u_{\epsilon,i}') (x) -  (|u_{0,i}'|^{p_i-2}u_{0,i}')(x) \le (|u_{\epsilon,i}'|^{p_i-2}u_{\epsilon,i}') (\ell_i) -  (|u_{0,i}'|^{p_i-2}u_{0,i}')(\ell_i)\le\epsilon 
$$
for  all $x_{0,i}<x<\ell_i.$
Then, using $\varrho(r):= \vert r \vert^{p-2}r$,
 $$u_{\epsilon,i}'(x)\le \varrho^{-1}\left(\varrho(u_{0,i}'(x))+\epsilon\right)\quad \ \forall x_{0,i}<x<\ell_i.$$
Integrating in the above expression,
 $$u_{\epsilon,i}(\ell_i)-u_{\epsilon,i}(x_{0,i})\le \int_{x_{0,i}}^{\ell_i}\varrho^{-1}\left(\varrho(u_{0,i}'(x))+\epsilon\right).$$
Therefore, 
\begin{equation}\label{27f22201}
0\le u_{\epsilon}(\v_n)-u_{0}(\v_n)\le \int_{x_{0,i}}^{\ell_i}\varrho^{-1}\left(\varrho(u_{0,i}'(x))+\epsilon\right)+u_{\epsilon,i}(x_{0,i})-u_{0,i}(\ell_i).
\end{equation}

In the case (ii)  we also have that, for all $i$,
$$ (|u_{\epsilon,i}'|^{p_i-2}u_{\epsilon,i}') (\ell_i) -  (|u_{0,i}'|^{p_i-2}u_{0,i}') (\ell_i)\ge  0,$$
otherwise, we get a contradiction with the growth of the solution (Lemma \ref{lemma3}).  Hence, since $\omega^\epsilon_{\v_n}=\omega_{\v_n}+\epsilon,$
\begin{equation}\label{satis2701001} 0\le (|u_{\epsilon,i}'|^{p_i-2}u_{\epsilon,i}') (\ell_i) -  (|u_{0,i}'|^{p_i-2}u_{0,i}') (\ell_i)\le  \epsilon.
\end{equation}
Then for any $i$,  following the same steps that in case (i) we get,  
$$0\le u_{\epsilon}(\v_n)-u_{0}(\v_n)\le \int_{x_{0,i}}^{\ell_i}\varrho^{-1}\left(\varrho(u_{0,i}'(x))+\epsilon\right)+u_{\epsilon,i}(x_{0,i})-u_{0,i}(\ell_i).$$

Therefore, in any situation, since we have finite indices in $\{1,2,...,k\}$, there exists a subsequence $\epsilon_{m}$, that we denote equal, such that, for an unique $i\in \{1,2,...,k\}$,
$$0\le u_{\epsilon_{m}}(\v_n)-u_{0}(\v_n)\le \int_{x_{0,i}}^{\ell_i}\varrho^{-1}\left(\varrho(u_{0,i}'(x))+\epsilon_{m}\right)+u_{\epsilon_{m},i}(x_{0,i})-u_{0,i}(\ell_i),$$
and $$u_{\epsilon_{m},i}(x_{0,i})\to u_{0,i}(x_{0,i}).$$
Now we finish as in the proof of the previous lemma.
\end{proof}

\begin{lemma}\label{antepenultimo}
Let  $\G$ be a  connected and compact metric graph with $$\V(\G) = \{ \v_1, \v_2, \ldots, \v_m \}.$$
Assume $\gamma_\e:\mathbb{R}\to \mathbb{R}$ is a  continuous and increasing function with 
$\gamma_\e(\mathbb{R})=\mathbb{R}$ and $\gamma_\e(0)=0$ for all $\e \in \E(\G)$ and 
$\omega=\{\omega_{\v_1},  \omega_{\v_{2}},\ldots,\omega_{\v_m}  \} \subset \R$. Let $j$ and $k$ be any natural numbers  such that $j+k\le m$. Take $\epsilon_i> 0$ for $i=1,2,\ldots,j$ and $i=j+1,j+2,\ldots,j+k$.
 Let $\omega^\epsilon$ be
such that 
$$\omega^\epsilon_{\v_i}=\omega_{\v_i}-\epsilon_i,\ i=1,2,\ldots,j$$
$$\omega^\epsilon_{\v_i}=\omega_{\v_i}+\epsilon_i,\ i=j+1,j+2,\ldots,j+k$$
$$\omega^\epsilon_{\v_i}=\omega_{\v_i}\ \hbox{in other  vertices.}$$ 
Let $v_\epsilon = \overline{\gamma}(u_\epsilon)$ be weak solution  to problem $(P_{\omega^\epsilon}^f)$, and $v_0 = \overline{\gamma}(u_0)$ be weak solution  to problem $(P_{\omega}^f)$. 
 Then, 
\item[(i)] If   $\displaystyle\sum_{i=1}^j\epsilon_i\le \sum_{i=j+1}^{j+k}\epsilon_i,$  we have 
 $ u_0(\v_i) \le u_\epsilon(\v_i)  \ \hbox{ for $i=j+1,j+2,\ldots,j+k$.}$
 \item[(ii)] If   $\displaystyle\sum_{i=1}^j\epsilon_i\ge \sum_{i=j+1}^{j+k}\epsilon_i,$   we have 
 $ u_\epsilon(\v_i) \le u_0(\v_i)  \ \hbox{ for $i=1,2,\ldots,j$.}$
\end{lemma}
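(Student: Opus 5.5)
The plan is to prove (i) by contradiction with a ``connected component'' (maximum principle) argument on the graph. Statement (ii) then follows by the same argument applied to $u_0-u_\epsilon$ replaced by $u_\epsilon-u_0$, or by exchanging the roles of $(P^f_\omega)$ and $(P^f_{\omega^\epsilon})$.

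Assume $\sum_{i=1}^j\epsilon_i\le\sum_{i=j+1}^{j+k}\epsilon_i$, and suppose $u_\epsilon(\v_{i_0})<u_0(\v_{i_0})$ for some $i_0\in\{j+1,\dots,j+k\}$. By Remark~\ref{KirchTrue} both $u_0$ and $u_\epsilon$ are continuous on $\G$ and $C^1$ on each closed edge, and the Kirchhoff conditions hold pointwise. Let $\Omega$ be the connected component of the open set $\{u_0>u_\epsilon\}\subset\G$ that contains $\v_{i_0}$. I integrate the difference of the two equations \eqref{distribu} over $\Omega$, edge by edge, and use Green's formula \eqref{GREENF0}. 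This gives
$$
\int_\Omega (v_0-v_\epsilon)=\sum_{x\in\partial\Omega}\bigl(\text{outgoing flux of }u_0-\text{outgoing flux of }u_\epsilon\bigr)(x)+\sum_{\v\in\Omega\cap\V(\G)}(\omega_\v-\omega^\epsilon_\v).
$$
\begin{itemize}
\item The left-hand side is strictly positive: $\overline{\gamma}$ is increasing and $u_0>u_\epsilon$ on $\Omega$, which has positive measure.
\item At a boundary point $x\in\partial\Omega$ (a point inside an edge, or a vertex where $u_0=u_\epsilon$) we have $u_0-u_\epsilon>0$ inside $\Omega$ and $=0$ at $x$. Hence the outward derivative of $u_0-u_\epsilon$ is $\le0$, and since $\varrho(r)=|r|^{p-2}r$ is increasing, every flux difference there is $\le 0$. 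This is the same mechanism as in Lemma~\ref{lemma3}.
\item The vertex term equals $\sum_{\v_i\in\Omega,\,i\le j}\epsilon_i-\sum_{\v_i\in\Omega,\,j<i\le j+k}\epsilon_i$.
\end{itemize}
Therefore $0<\sum_{\text{dec}\cap\Omega}\epsilon_i-\sum_{\text{inc}\cap\Omega}\epsilon_i$. This is an immediate contradiction whenever $\Omega$ contains no decreased vertex, for instance when $\Omega=\{u_0>u_\epsilon\}$ does not reach $\v_1,\dots,\v_j$.

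The main obstacle is the case where $\Omega$ also contains some decreased vertices $\v_1,\dots,\v_j$, so that the local balance above is not contradictory by itself. There I would combine two pieces of information. The global mass balance \eqref{MBC1} gives
$$\int_\G(v_\epsilon-v_0)=\sum_{i>j}\epsilon_i-\sum_{i\le j}\epsilon_i\ge0.$$
Theorem~\ref{teocontprin001} gives $\int_\G(v_0-v_\epsilon)^+\le\sum_{i\le j}\epsilon_i$. Applying the same component computation to the components of $\{u_\epsilon>u_0\}$ (where the signs of all boundary terms reverse) and summing over all components yields an identity in which every increased vertex lying in $\{u_0>u_\epsilon\}$ contributes its $\epsilon_i$ on the wrong side. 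I expect this bookkeeping to force $\sum_{i\le j}\epsilon_i>\sum_{i>j}\epsilon_i$, contradicting the hypothesis.

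If this global bookkeeping does not close, my fallback is a decomposition. I would first solve the problem with only the increases, where $u$ increases at every vertex by Theorem~\ref{teocontprin001} and Lemma~\ref{lemma5}. I would then add the decreases one at a time, using the quantitative monotonicity and continuity from Lemma~\ref{lemma5} and Remark~\ref{030326_001rem} to show that the drop at the increased vertices is controlled by the total decrease. This is the delicate step.
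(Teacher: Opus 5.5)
The gap is in your ``main obstacle'' case, and it cannot be closed: for $k\ge2$ statement (i) is false. No bookkeeping, your fallback, or the paper's induction by vertex splitting can prove it in that generality.

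\textbf{What your argument does prove.} Your flux balance on the component $\Omega$ of $\{u_0>u_\epsilon\}$ containing $\v_{i_0}$ is correct. It uses that each $\gamma_\e$ is strictly increasing, which the paper implicitly assumes when it writes $\gamma_\e^{-1}$. Also, at a vertex of $\partial\Omega$ only the edges along which $\Omega$ reaches that vertex contribute, and there is no $\omega$-term there. This is a different and much shorter route than the paper's induction on the number of edges, and it gives more than you noticed:
\begin{itemize}
\item If $k=1$, then $\v_{j+1}$ is the only increased vertex. The vertex term is at most $\sum_{i\le j}\epsilon_i-\epsilon_{j+1}\le 0$ whichever decreased vertices $\Omega$ contains, so (i) follows at once. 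Symmetrically, (ii) follows for $j=1$.
\item In general, you get $u_0(\v_{i_0})\le u_\epsilon(\v_{i_0})$ whenever $\epsilon_{i_0}\ge\sum_{i\le j}\epsilon_i$.
\item Summing over the components containing increased vertices, under the hypothesis of (i) at least one increased vertex satisfies $u_0\le u_\epsilon$.
\end{itemize}
This covers the $j=k=1$ instances invoked later in Lemma~\ref{030326_002} and Lemma~\ref{lemma51822}.

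\textbf{Counterexample for $k=2$.} Take $p_\e=2$, $\gamma_\e=\mathrm{id}$, $f=0$ and $\omega=0$, so $u_0=0$. Use the path $\v_2$--$\v_1$--$\v_3$ with edge lengths $a$ and $L$, and let $j=1$, $k=2$, $\epsilon_1=\epsilon_2+\epsilon_3$. Identify $\G$ with $[0,a+L]$ via $\v_2\mapsto 0$, $\v_1\mapsto a$, $\v_3\mapsto a+L$; the degree-two vertex $\v_1$ just carries a point source. Then
\[
u_\epsilon=\epsilon_2G(\cdot,0)-\epsilon_1G(\cdot,a)+\epsilon_3G(\cdot,a+L),
\]
where $G(x,y)=\cosh(\min\{x,y\})\cosh(a+L-\max\{x,y\})/\sinh(a+L)$ is the Neumann Green function of $I-\frac{d^2}{dx^2}$. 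Hence
\[
\sinh(a+L)\,u_\epsilon(\v_2)=\epsilon_2\bigl(\cosh(a+L)-\cosh L\bigr)-\epsilon_3\bigl(\cosh L-1\bigr).
\]
For $a=L=1$, $\epsilon_3=1$, $\epsilon_2=0.1$ this is about $0.22-0.54<0$. So $u_\epsilon(\v_2)<u_0(\v_2)$ at an increased vertex, although $\sum_{i\le j}\epsilon_i=\sum_{i>j}\epsilon_i$.

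This is exactly your bad configuration: $\Omega$ contains $\v_2$ and the adjacent decreased vertex $\v_1$, but not $\v_3$, where the compensating mass enters. Reversing all signs gives a counterexample to (ii) with $j=2$. The correct conclusion of your approach is the restricted statement listed above, not the lemma as stated.
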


\begin{proof}  
Let us call to problem $(P_{\omega^\epsilon}^f)$ the perturbed problem.  As in the previous proof, let us call $[v_\epsilon]_{\e_i}=v_{\epsilon,i}$ and $[u_\epsilon]_{\e_i}=u_{\epsilon,i}$, $\ell_{\e_i}=\ell_i$,  and $p_{\e_i}=p_i$.

We will prove (i), (ii) is proved similarly. The proof is done by induction on the number $N$ of edges of the graph. The case $N=1$ has been proved in Lemma~\ref{lemma3}  (see Remark~\ref{030326_001rem}),  a simple situation with two edges is also proved in Remark~\ref{nuevolemma01}. Suppose that we have the result for any   connected  graph of $k$ edges with $1\le k\le N$, $N\in\mathbb{N},$ and let us prove the result for a graph with $N+1$ edges. So, take such a graph with $N+1$ edges and suppose it has $m$ vertices, and that at $j$ vertices   the flux  in  the perturbed problem decreases with respect to the one in the non perturbed problem  and at $k$ vertices  the flux increases, take the same  notation of the   statement. For the  $k$ vertices where the flux increases we assume that they are final vertices in their parametrization.

By the hypothesis $\omega^\epsilon_{\v_{j+1}}=\omega_{\v_{j+1}}+\epsilon_{j+1}$, $\epsilon_{j+1}>0$,  we have that  
\begin{equation}\label{100326001nl}
\sum_{\e\in\E_{\v_{j+1}}(\G)}(|u_{\epsilon,\e}'|^{p_\e-2}u_{\epsilon,\e}') (\ell_\e) -  (|u_{0,\e}'|^{p_\e-2}u_{0,\e}') (\ell_\e)=\epsilon_{j+1}>0.
\end{equation}
   Hence the set $\E_{\v_{j+1}}(\G)^+\subset \E_{\v_{j+1}}(\G)$ of edges $\e$ such that
\begin{equation}\label{130326001} (|u_{\epsilon,\e}'|^{p_{\e}-2}u_{\epsilon,\e}') (\ell_{\e}) -  
  (|u_{0,\e}'|^{p_{\e}-2}u_{0,\e}')(\ell_{\e})>0,
\end{equation}
is not empty. Set 
$m_{j+1}=\sharp(\E_{\v_{j+1}}(\G))$.

Consider now the graph $\widetilde\G$ obtained from $\G$ by splitting the vertex $\v_{j+1}$ into $m_{j+1}$ vertices  so that each of them is the terminal vertex of exactly one edge in $\E_{\v_{j+1}}(\G)$, let us call these vertices
$$\v_{j+1,i},\quad i=1,2,\ldots, m_{j+1}.$$
All the other vertices and edges are preserved as in $\G$. 
See Figure~\ref{fig:vertex-splitting-example} (the splitting vertex is~$\v_7$). And on this new graph we consider fluxes on the vertices according to the decomposition obtained from Proposition~\ref{lemma2} for the solution with $\omega$ and for the one of the perturbed problem with $\omega^\epsilon$.    Since we are using the same edges and fluxes, the solutions we had for $\G$ are also solutions for this new graph   $\widetilde\G$.
 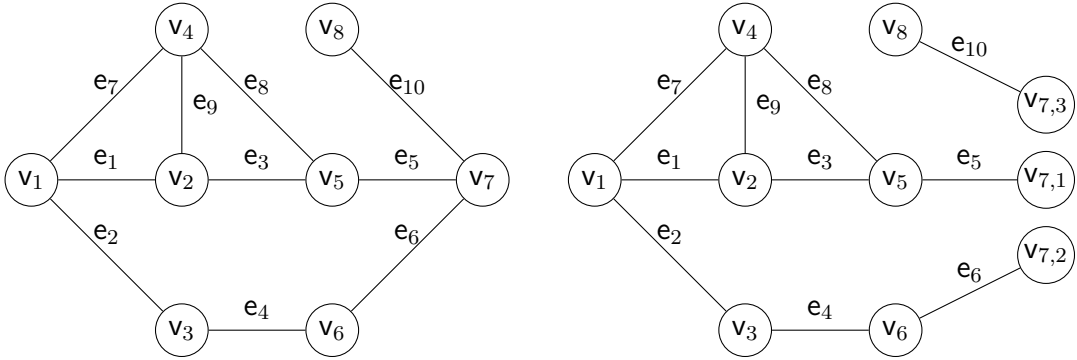
\begin{figure}[ht]
\centering

\begin{tikzpicture}[
vertex/.style={circle, draw, minimum size=7mm, inner sep=1pt},
node distance=2cm
]

\node[vertex] (v1) {$\v_1$};
\node[vertex, right of=v1] (v2) {$\v_2$};
\node[vertex, below of=v2] (v3) {$\v_3$};
\node[vertex, above of=v2] (v4) {$\v_4$};
\node[vertex, right of=v2] (v5) {$\v_5$};
\node[vertex, right of=v3] (v6) {$\v_6$};
\node[vertex, right of=v5] (v7) {$\v_7$};
\node[vertex, above of=v5] (v8) {$\v_8$};

\draw (v1) -- node[midway, above] {$\e_1$} (v2);
\draw (v1) -- node[midway, above] {$\e_2$} (v3);
\draw (v2) -- node[midway, above] {$\e_3$} (v5);
\draw (v3) -- node[midway, above] {$\e_4$} (v6);
\draw (v5) -- node[midway, above] {$\e_5$} (v7);
\draw (v6) -- node[midway, above] {$\e_6$} (v7);
\draw (v1) -- node[midway, above] {$\e_7$} (v4);
\draw (v4) -- node[midway, above] {$\e_8$} (v5);
\draw (v4) -- node[midway, right] {$\e_9$} (v2);
\draw (v7) -- node[midway, above] {$\e_{10}$} (v8);

\end{tikzpicture}
\hspace{0.5cm}
\begin{tikzpicture}[
vertex/.style={circle, draw, minimum size=7mm, inner sep=1pt},
node distance=2cm
]

\node[vertex] (v1) {$\v_1$};
\node[vertex, right of=v1] (v2) {$\v_2$};
\node[vertex, below of=v2] (v3) {$\v_3$};
\node[vertex, above of=v2] (v4) {$\v_4$};
\node[vertex, right of=v2] (v5) {$\v_5$};
\node[vertex, right of=v3] (v6) {$\v_6$};

\node[coordinate] (v7pos) [right of=v5] {};

\node[vertex] (v71) at ($(v7pos)$) {$\v_{7,1}$};
\node[vertex] (v72) at ($(v7pos)+(0,-1.0)$) {$\v_{7,2}$};
\node[vertex] (v73) at ($(v7pos)+(0,1.0)$) {$\v_{7,3}$};
\node[vertex, above of=v5] (v8)   {$\v_8$};

\draw (v1) -- node[midway, above] {$\e_1$} (v2);
\draw (v1) -- node[midway, above] {$\e_2$} (v3);
\draw (v2) -- node[midway, above] {$\e_3$} (v5);
\draw (v3) -- node[midway, above] {$\e_4$} (v6);

\draw (v5) -- node[midway, above] {$\e_5$} (v71);
\draw (v6) -- node[midway, above] {$\e_6$} (v72);

\draw (v1) -- node[midway, above] {$\e_7$} (v4);
\draw (v4) -- node[midway, above] {$\e_8$} (v5);
\draw (v4) -- node[midway, right] {$\e_9$} (v2);

\draw (v73) -- node[midway, above] {$\e_{10}$} (v8);

\end{tikzpicture}

\caption{A graph \(\G\)  and the graph obtained from it by splitting the vertex \(\v_7\).}
\label{fig:vertex-splitting-example}
\end{figure}

It could appear a  subgraph $\widehat\G$ of $\widetilde\G$ which is not connected to any of the vertices $\{\v_1,\v_2,\ldots,\v_j\}$, suppose that this happens with  vertex  $\v_{j+1,i_0}$  ($\v_{7,3}$ in Figure~\ref{fig:vertex-splitting-example}). Now, if in the decomposition for the non perturbed problem and the perturbed one via Proposition~\ref{lemma2},  the   flux in  $\v_{j+1,i_0}$  for the perturbed problem is smaller than the one for the non perturbed, 
by Lemma~\ref{lemma5}, we have
that the solution in such vertex   decreases respect to the solution $v_0=\gamma(u_0)$, so the same happens at vertex  $\v_{j+1}$. But, if we delete the associated subgraph $\widehat\G$ from $\G$, maintaining the rest of edges and vertices and the fluxes determined via  Proposition~\ref{lemma2} for the perturbed problem and the non perturbed one, this graph is under the induction hypothesis and we get a contradiction with the continuity of the solution since we will also get that the solution at vertex $\v_{j+1}$   increases respect to the solution $v_0=\gamma(u_0)$. Then, if such a disconnected subgraph $\widehat\G$ appears, then the  difference of flux on the vertex  $\v_{j+1,i_0}$ is null or positive. If it is positive then,  by Lemma~\ref{lemma5}, we have
that the solution for the perturbed problem increases  in such vertex  respect to the solution $v_0=\gamma(u_0)$, and we have the growing property for $\v_{j+1}$. It it is null, and remove the subgraph, we can apply induction again on the remaining subgraph and we also get the growing property desired.

If the above disconnectedness occurs at all $\v_{j+i}$, $i=1,2,...,k,$ the proof is done. But if there is one for which it does not hold, we proceed as follows. Suppose that it does no occurs ad $\v_{j+1,1}$. Take $\e\in \E_{\v_{j+1}}(\G)^+$. Arguing as in the proof of Lemma~\ref{lemma3}, Point 3, either there exists $x_\epsilon\in [0,\ell_{\e}[$ such that
\begin{equation}\label{c12703}\begin{array}{c}\displaystyle u_{\epsilon,\e}(x)-u_{0,\e}(x)< u_{\epsilon,\e}(y)-u_{0,\e}(y)\quad\forall x_\epsilon< x<y\le\ell_{\e} \ \hbox{ and} \\ \\
\displaystyle(|u_{\epsilon,\e}'|^{p_{\e}-2}u_{\epsilon,\e}')(x_\epsilon)=(|u_{0,\e}'|^{p_{\e}-2}u_{0,\e}')(x_\epsilon),
\end{array}
\end{equation}
or 
\begin{equation}\label{c22703}\begin{array}{c}\displaystyle u_{\epsilon,\e}(x)-u_{0,\e}(x)< u_{\epsilon,\e}(y)-u_{0,\e}(y)\quad\forall 0\le x<y\le\ell_{\e} \ \hbox{ and} \\ \\
\displaystyle(|u_{\epsilon,\e}'|^{p_{\e}-2}u_{\epsilon,\e}')(0)-(|u_{0,\e}'|^{p_{\e}-2}u_{0,\e}')(0)>0.
\end{array}
\end{equation}
  In the first case~\eqref{c12703}, like in such proof, we get 
$$u_0(\v_{j+1,1}) \le u_\epsilon(\v_{j+1,1}),$$
which gives what we want to prove for this vertex. Consider now that we are in the second case~\eqref{c22703}. Then in the initial vertex of edge
$\e$ we have that
$$(|u_{\epsilon,\e}'|^{p_{\e}-2}u_{\epsilon,\e}')(0)-(|u_{0,\e}'|^{p_{\e}-2}u_{0,\e}')(0)>0.$$
Now if 
$$(|u_{\epsilon,\e}'|^{p_{\e}-2}u_{\epsilon,\e}')(0)-(|u_{0,\e}'|^{p_{\e}-2}u_{0,\e}')(0)\le (|u_{\epsilon,\e}'|^{p_{\e}-2}u_{\epsilon,\e}') (\ell_{\e}) -  
  (|u_{0,\e}'|^{p_{\e}-2}u_{0,\e}')(\ell_{\e}),$$
then, by Lemma~\ref{lemma3}, we get the desired growth condition. In other case, following the idea used previously, we
first obtain that at $\vi_\e$ the solution of the perturbed problem decreases respect to the one of the non perturbed problem by using only edge $\e$, and if we remove~$\e$, on the remaining subgraph we are under the induction hypothesis and we just get the contrary growing property at $\vi_\e$, which gives a contradiction with continuity. 
\end{proof}

  As a particular case  of the above lemma we have the following result. 

\begin{lemma}\label{030326_002}   Let  $\G$ be a  connected and compact metric graph with $$\V(\G) = \{ \v_1, \ldots, \v_{n-1},\v_n \}.$$
Assume $\gamma_\e:\mathbb{R}\to \mathbb{R}$ is a  continuous and increasing function with 
$\gamma_\e(\mathbb{R})=\mathbb{R}$ and $\gamma_\e(0)=0$ for all $\e \in \E(\G)$, and, for $\omega=\{\omega_{\v_1}, \ldots, \omega_{\v_{n-1}},\omega_{\v_n}  \} \subset \R$ and $\epsilon\in \R$,  set $\omega^\epsilon$ be such that $$\omega^\epsilon_{\v_i}=\omega_{\v_i}, \ i=2,...,n-2,$$ 
$$\omega^\epsilon_{\v_n}=\omega_{\v_n}+\epsilon,$$
$$\omega^\epsilon_{\v_1}=\omega_{\v_1}-\epsilon.$$
  Let $v_\epsilon = \overline{\gamma}(u_\epsilon)$ be weak solution  to problem $(P_{\omega^\epsilon}^f)$, also for $\epsilon=0$.
 Then, 
 $$u _0(\v_n) \le u_\epsilon(\v_n) \ \hbox{ and }\   u_\epsilon(\v_1)\leq  u_0(\v_1)  \quad \hbox{if } \   \epsilon \ge 0,
 $$
 and 
 $$u_\epsilon(\v_n)\le u_0(\v_n)    \ \hbox{ and }\  u_\epsilon(\v_1)\ge u_0(\v_1)  \quad \hbox{if } \   \epsilon \le 0.$$
 \end{lemma}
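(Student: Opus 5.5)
The statement is announced as a particular case of Lemma~\ref{antepenultimo}, so the plan is to specialize that lemma with $j=k=1$. The decreased vertex is $\v_1$ and the increased vertex is $\v_n$; every other vertex keeps its flux (the range ``$i=2,\ldots,n-2$'' in the statement should be read as $i=2,\ldots,n-1$). The case $\epsilon=0$ is trivial, since then $u_\epsilon=u_0$ by the uniqueness in Theorem~\ref{teocontprin001}.

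\emph{Case $\epsilon>0$.} Relabel the vertices so that $\v_n$ plays the role of $\v_{j+1}=\v_2$ in Lemma~\ref{antepenultimo}. Since that lemma allows any enumeration of $\V(\G)$, this is harmless. Take $\epsilon_1=\epsilon_2=\epsilon$. Both hypotheses $\sum_{i\le j}\epsilon_i\le\sum_{i>j}\epsilon_i$ and $\sum_{i\le j}\epsilon_i\ge\sum_{i>j}\epsilon_i$ then hold with equality. Part (i) gives $u_0(\v_n)\le u_\epsilon(\v_n)$, and part (ii) gives $u_\epsilon(\v_1)\le u_0(\v_1)$. If Lemma~\ref{antepenultimo} requires a parametrization in which $\v_n$ is a terminal vertex of all its incident edges, we first reparametrize those edges. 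By the remark following Remark~\ref{KirchTrue}, reparametrizing an edge only reflects $[f]_\e$ and the solution, so the vertex values $u(\v)$ are unchanged.

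\emph{Case $\epsilon<0$.} Write $\delta=-\epsilon>0$. Now $\omega^\epsilon_{\v_n}=\omega_{\v_n}-\delta$ and $\omega^\epsilon_{\v_1}=\omega_{\v_1}+\delta$, so the roles of $\v_1$ and $\v_n$ are exchanged. Apply Lemma~\ref{antepenultimo} with $\v_n$ as the vertex of decreasing flux and $\v_1$ as the vertex of increasing flux, both with size $\delta$. Part (i) yields $u_0(\v_1)\le u_\epsilon(\v_1)$, and part (ii) yields $u_\epsilon(\v_n)\le u_0(\v_n)$, which is the claim.

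The only step needing care is checking that the hypotheses of Lemma~\ref{antepenultimo} really are met, namely the balanced sums and the orientation and labelling conventions. This reduces to the relabelling and reparametrization invariance just described. No further estimates are needed, because all the analytic work (the induction on the number of edges and the vertex-splitting argument) is already contained in Lemma~\ref{antepenultimo}.
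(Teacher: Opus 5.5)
Your proposal is correct and matches the paper, which gives no separate proof and presents this lemma as the particular case $j=k=1$ of Lemma~\ref{antepenultimo}. Taking equal perturbation sizes makes both parts (i) and (ii) of that lemma apply, and for $\epsilon<0$ the roles of $\v_1$ and $\v_n$ are exchanged; your reading of the index range as $i=2,\ldots,n-1$ and your relabelling and reparametrization remarks are exactly the bookkeeping this specialization needs.
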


 The following remark illustrates in a simple case which is the idea for the proof of the next lemma.
\begin{remark}\label{nuevolemma01}\rm
   Let $\G$ be a graph with two edges $\{\e_1,\e_2\}$ like in Figure~\ref{fig:002}, $\vi_{\e_1}=\v_1, \vf_{\e_1}=\v_2=\vi_{\e_2}, \vf_{\e_2}=\v_3$. Let, for $i=1,2$, $\gamma_i:=\gamma_{\e_i}:\mathbb{R}\to \mathbb{R}$ 
be a continuous and increasing function with $\gamma_i(\mathbb{R})=\mathbb{R}$ and $\gamma_i(0)=0$, and $p_i:=p_{\e_i}\in(1,+\infty)$.
Let $\epsilon\ge 0$. Put the following fluxes on vertices:
$$\omega_{\v_1}^\epsilon=a-\epsilon, \ \omega_{\v_2}^\epsilon=b,\, \omega_{\v_3}^\epsilon=c+\epsilon,$$
call $\omega^\epsilon=\{\omega^\epsilon_{\v_1},\omega^\epsilon_{\v_2},\omega^\epsilon_{\v_3}\}$.
\begin{figure}[ht]
\centering
\begin{tikzpicture}[scale=1,
  midarrow/.style={
    postaction={decorate},
    decoration={
      markings,
      mark=at position 0.5 with {\arrow{>}}
    }
  }
]

\node[circle, draw] (v1) at (0,0) {$\v_1$};
\node[circle, draw] (v2) at (2.2,0) {$\v_2$};
\node[circle, draw] (v3) at (5.2,0) {$\v_3$};

\draw[midarrow] (v1) -- node[above] {$\e_1$} (v2);
\draw[midarrow] (v2) -- node[above] {$\e_2$} (v3);

\end{tikzpicture}
\caption{Arrows indicate the parametrization used.}
\label{fig:002}
\end{figure}
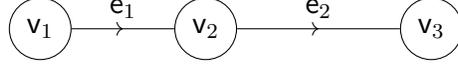
Let $v_\epsilon=\overline\gamma(u_\epsilon)$ be  solution of $(P_{\omega^\epsilon}^f)$. 
Call $v_{\epsilon,i}=[v_\epsilon]_{\e_i}$, $u_{\epsilon,i}=[u_\epsilon]_{\e_i}$, $\ell_{\e_i}=\ell_i$. We have that $\epsilon\mapsto u_{\epsilon,2}(\ell_2)$ is continuous and increasing in $[0,+\infty)$,  $\epsilon\mapsto u_{\epsilon,1}(0)$ is continuous and decreasing in $[0,+\infty)$,  and 
\begin{equation}\label{luns0001} \lim_{\epsilon\to +\infty}u_{\epsilon,2}(\ell_2)=+\infty 
 \ \hbox{ or }\ \lim_{\epsilon\to +\infty}u_{\epsilon,1}(0)=-\infty.
\end{equation}

Indeed, by Proposition~\ref{lemma2}, we have that
$$v_{\epsilon,i}=\gamma_{i}(u_{\epsilon,i}) \ \hbox{is solution of problem} \ P^{\gamma_i, [f]_{\e_1}}_{ p_i, a_{\e_i}^\epsilon,b_{\e_i}^\epsilon} , \ i= 1,2,$$  for adequate $a_{\e_i}^\epsilon,b_{\e_i}^\epsilon$,  that,   looking at the vertex $\v_2$, are
$$  a_{\e_2}^\epsilon= -(|u_{\epsilon,2}'|^{p_2-2}u_{\epsilon,2}')(0), \quad  b_{\e_1}^\epsilon= (|u_{\epsilon,1}'|^{p_1-2}u_{\epsilon,1}')(\ell_{1}),$$ with 
$$b=a_{\e_2}^\epsilon+b_{\e_1}^\epsilon.$$
 Then, we have
$$a_{\e_2}^\epsilon= -(|u_{0,2}'|^{p_{2}-2}u_{0,2}')(0) -\Big((|u_{\epsilon,2}'|^{p_{2}-2}u_{\epsilon,2}')(0)
 -(|u_{0,2}'|^{p_{2}-2}u_{0,2}')(0) \Big),$$
$$b_{\e_1}^\epsilon= (|u_{0,1}'|^{p_{1}-2}u_{0,1}')(\ell_{1}) +\Big((|u_{\epsilon,1}'|^{p_{1}-2}u_{\epsilon,1}')(\ell_{1})- (|u_{0,1}'|^{p_{1}-2}u_{0,1}')(\ell_{1}) \Big).$$
Observe that, since $\omega^\epsilon_{\v_2}=b$,
$$(|u_{\epsilon,2}'|^{p_{2}-2}u_{\epsilon,2}')(0)
 -(|u_{0,2}'|^{p_{2}-2}u_{0,2}')(0) =(|u_{\epsilon,1}'|^{p_{1}-2}u_{\epsilon,1}')(\ell_{1})- (|u_{0,1}'|^{p_{1}-2}u_{0,1}')(\ell_{1})$$ let us call it $\sigma_\epsilon$.
Hence  
$$a_{\e_2}^\epsilon=a_{\e_2}^0-\sigma_\epsilon,\ b_{\e_1}^\epsilon=b_{\e_1}^0+\sigma_\epsilon.$$

If $$\sigma_\epsilon >\epsilon,$$ on the one hand we have
$$(|u_{\epsilon,2}'|^{p_{2}-2}u_{\epsilon,2}')(0)
 -(|u_{0,2}'|^{p_{2}-2}u_{0,2}')(0) > \epsilon,$$
 which by Lemma~\ref{lemma3} implies that
 $$ u_{\epsilon,2}(\v_2) <u_{0,2}(\v_2).$$
 On the other hand
 $$(|u_{\epsilon,1}'|^{p_{1}-2}u_{\epsilon,1}')(\ell_{1})- (|u_{0,1}'|^{p_{1}-2}u_{0,1}')(\ell_{1}) > \epsilon,$$
  which, by Lemma~\ref{lemma3}, implies that
$$ u_{\epsilon,1}(\v_2) > u_{0,1}(\v_2).$$
Now these two fact  break down the continuity at $\v_2$. Consequently, we have $$\sigma_\epsilon\le\epsilon.$$
and, then, from our previous result Lemma~\ref{lemma3} we get the growth property  (observe that this idea is developed in the proof of Lemma~\ref{antepenultimo} for a general case).

Let us now see that $\displaystyle\lim_{\epsilon\to 0^+}u_{\epsilon,2}(\ell_2)=u_{0,2}(\ell_2)$.
Indeed, by Lemma~\ref{lemma3},  we have that $u_{\epsilon,2} \le \widetilde u_\epsilon$, where $ \widetilde u_\epsilon$ is the solution of $(P_{p_2,-\epsilon,\epsilon}^{\gamma_{\e_2},[f]_{\e_2}})$. Hence,
$$0\le u_{\epsilon,2}(\ell_2)-u_0(\ell_2)\le \widetilde u_\epsilon(\ell_2)-u_0(\ell_2),$$
and, by Lemma~\ref{lemma3}, $\displaystyle\lim_{\epsilon\to 0^+}\left( \widetilde u_\epsilon(\ell_2) -u_0(\ell_2)\right)=0.$ Hence, $$\displaystyle\lim_{\epsilon\to 0^+}u_{\epsilon,2}(\ell_2)=u_{0,2}(\ell_2).$$
Similarly, $\displaystyle\lim_{\epsilon\to 0^-}u_{\epsilon,2}(\ell_2)=u_{0,2}(\ell_2).$
And from here we get the continuity. Similarly for $\epsilon\mapsto u_{\epsilon,1}(0)$.

  Let us finally prove~\eqref{luns0001}. 
Take $\epsilon_n$ a sequence such that $\lim_n\epsilon_n=+\infty$. Now, there exist a subsequence that we denote equal,  such that (i) or (ii) happen, where:

\noindent (i) the two following properties hold:
$$(|u_{\epsilon_n,2}'|^{p_{2}-2}u_{\epsilon_n,2}')(x)
 -(|u_{0,2}'|^{p_{2}-2}u_{0,2}')(x)>0\quad\hbox{ for all } 0<x<\ell_2,$$
and
$$(|u_{\epsilon_n,1}'|^{p_{1}-2}u_{\epsilon_n,1}')(x)
 -(|u_{0,1}'|^{p_{1}-2}u_{0,1}')(x)>0\quad\hbox{ for all } 0<x<\ell_1,$$

\noindent (ii) there exists $x_{\epsilon_n}\in (0,\ell_2)$, for each $n$, such that $$(|u_{\epsilon_n,2}'|^{p_{2}-2}u_{\epsilon_n,2}')(x)
 -(|u_{0,2}'|^{p_{2}-2}u_{0,2}')(x)>0\quad\hbox{for all }x_{\epsilon_n}<x<\ell_2,$$
 and 
 $$(|u_{\epsilon_n,2}'|^{p_{2}-2}u_{\epsilon_n,2}')(x_{\epsilon_n})
 -(|u_{0,2}'|^{p_{2}-2}u_{0,2}')(x_{\epsilon_n})=0;$$
 or there exists $x_{\epsilon_n}\in (0,\ell_1)$, for each $n$, such that $$(|u_{\epsilon_n,1}'|^{p_{1}-2}u_{\epsilon_n,1}')(x)
 -(|u_{0,1}'|^{p_{1}-2}u_{0,1}')(x)>0\quad\hbox{for all }x_{\epsilon_n}<x<\ell_1,$$
 and 
 $$(|u_{\epsilon_n,1}'|^{p_{1}-2}u_{\epsilon_n,1}')(x_{\epsilon_n})
 -(|u_{0,1}'|^{p_{1}-2}u_{0,1}')(x_{\epsilon_n})=0.$$

In the case (i), we have that $$u_{\epsilon_n,2}'(x)\ge u_{0,2}'(x) \quad\hbox{ for all } 0<x<\ell_2,$$
and integrating,
\begin{equation}\label{2m26001}
u_{\epsilon_n,2}(x)-u_{0,2}(x)\ge u_{\epsilon_n,2}(y)-u_{0,2}(y)\quad\forall\, 0\le y\le x\le\ell_2.
\end{equation}
And $$u_{\epsilon_n,1}'(x)\ge u_{0,1}'(x) \quad\hbox{ for all } 0<x<\ell_1,$$
and integrating,
\begin{equation}\label{2m26001p1}
u_{\epsilon_n,1}(x)- u_{0,1}(x)\ge u_{\epsilon_n,1}(y)-u_{0,2}(y)\quad\forall\, 0\le y\le x\le\ell_1.
\end{equation}

Now,
$$\int_0^{\ell_2}(v_{\epsilon_n,2}-v_{0,2})=\epsilon_n-\sigma_{\epsilon_n}.$$
Hence if $$\lim_n(\epsilon_n-\sigma_{\epsilon_n})=+\infty,$$ by the previous equality, we get that there exists $ y_{n}\in (0,\ell_2)$ such that
$$\lim_n v_{\epsilon_n,2}(y_n)=+\infty,$$
and also,
$$\lim_n u_{\epsilon_n,2}(y_n)=+\infty.$$
Then, by~\eqref{2m26001},
$$\lim_n u_{\epsilon_n,2}(\ell_2)=+\infty.$$
Similarly, 
$$\lim_n u_{\epsilon_n,1}(0)=-\infty.$$

On the other hand, if 
$$\lim_n(\epsilon_n-\sigma_{\epsilon_n})\not=+\infty,$$
we have that $$\lim_n\sigma_{\epsilon_n}=+\infty.$$ 
Now, on account of~\eqref{2m26001}, \eqref{2m26001p1}  and continuity,
$$u_{\epsilon_n,1}(y)-u_{0,1}(y)\le u_{\epsilon_n,1}(\ell_1)-u_{0,1}(\ell_1)=u_{\epsilon_n,2}(0)-u_{0,2}(0)\le u_{\epsilon_n,2}(x)-u_{0,2}(x)$$  for $0\le y\le\ell_1,\ 0\le x\le \ell_2$.
So, if $$\alpha_n:=u_{\epsilon_n,1}(\ell_1)-u_{0,1}(\ell_1)=u_{\epsilon_n,2}(0)-u_{0,2}(0)\ge 0$$ then
$$u_{\epsilon_n,2}(x)\ge u_{0,2}(x)\quad\hbox{for all } 0\le x\le \ell_2,$$ and also
$$v_{\epsilon_n,2}(x)\ge v_{0,2}(x)\quad\hbox{for all } 0\le x\le \ell_2.$$
Hence,
$$0\le \int_{0}^{x}(v_{\epsilon_n,2}-v_{0,2})=(|u_{\epsilon_n,2}'|^{p_{2}-2}u_{\epsilon_n,2}')(x)-
(|u_{0,2}'|^{p_{2}-2}u_{0,2}')(x)
 -\sigma_{\epsilon_n}.$$
Then, using $\varrho(r):= \vert r \vert^{p-2}r$,
 $$u_{\epsilon_n,2}'(x)\ge\varrho^{-1}\left(\varrho(u_{0,2}'(x))+\sigma_{\epsilon_n}\right).$$
From here, integrating,
$$u_{\epsilon_n,2}(\ell_2)\ge u_{\epsilon_n,2}(0)+\int_{0}^{\ell_2}\varrho^{-1}\left(\varrho(u_{0,2}'(x))+\sigma_{\epsilon_n}\right).$$
 And from here, since $\sigma_{\epsilon_n}\to +\infty$ and $u_{\epsilon_n,2}(0)\ge u_{0,2}(0)$, we conclude that 
 $$\lim_n u_{\epsilon_n,2}(\ell_2)=+\infty.$$
 If $\alpha_n\le0$, arguing similarly with the edge $\e_1$, we get that
 $$\lim_n u_{\epsilon_n,1}(0)=-\infty.$$

In the case (ii),   suppose the case in which  there exists $x_{\epsilon_n}\in (0,\ell_2)$, for each $n$, such that
\begin{equation}\label{3m20002}(|u_{\epsilon_n,2}'|^{p_{2}-2}u_{\epsilon_n,2}')(x)
 -(|u_{0,2}'|^{p_{2}-2}u_{0,2}')(x)>0\quad\forall x_{\epsilon_n}<x<\ell_2
 \end{equation}
 and
$$(|u_{\epsilon_n,2}'|^{p_{2}-2}u_{\epsilon_n,2}')(x_{\epsilon_n})
 -(|u_{0,2}'|^{p_{2}-2}u_{0,2}')(x_{\epsilon_n})=0,$$ and let us see that  $\lim_n u_{\epsilon_n,2}(\ell_2)=+\infty$. The other case will give, with a similar argument, that $\lim_n u_{\epsilon_n,1}(0)=-\infty$.
 
 Then, in such case,
 $$\int_{x_{\epsilon_n}}^{\ell_2}(v_{\epsilon_n,2}-v_{0,2})=\epsilon,$$
 and therefore there exists $y_{\epsilon_n}\in (x_{\epsilon_n},\ell_2)$ such that 
 $$\lim_n v_{\epsilon_n,2}(y_n)=+\infty,$$
and also,
\begin{equation}\label{3m26004}\lim_n u_{\epsilon_n,2}(y_n)=+\infty.
\end{equation}
On the other hand, by~\eqref{3m20002}, as in the case (i),
\begin{equation}\label{3m26003}
u_{\epsilon_n,2}(x)\ge u_{\epsilon_n,2}(y)+u_{0,2}(x)-u_{0,2}(y)\quad\forall x_{\epsilon_n}\le y\le x\le\ell_2.
\end{equation}
Then, by this property and~\eqref{3m26004}, we get that
$$\lim_n u_{\epsilon_n,2}(\ell_2)=+\infty.$$ 
\hfill  $\blacksquare$
\end{remark}

\begin{lemma}\label{lemma51822} Let  $\G$ be a connected and compact metric graph   with vertices $$V(\G) = \{ \v_1,\v_2 \ldots, \v_{n-1},\v_n \},\quad n\ge 3.$$     Assume $\gamma_\e:\mathbb{R}\to \mathbb{R}$ is a  continuous and increasing function with $\gamma_\e(\mathbb{R})=\mathbb{R}$ and $\gamma_\e(0)=0$ for all $\e \in E(\G)$, and  assume $$\v_1,\v_n\in \partial \V(\G),$$ 
$$\v_1=\vi_{\e_1},\ \v_n=\vf_{\e_2},$$
($\e_1\neq \e_2$).
For $\omega=\{\omega_{\v_1},\omega_{\v_2}, \ldots, \omega_{\v_{n-1}},\omega_{\v_n}  \} \subset \R$ and $\epsilon\in \R$,  set $\omega^\epsilon$ be such that $$\omega^\epsilon_{\v_1}=\omega_{\v_1}-\epsilon,$$ $$ \omega^\epsilon_{\v_i}=\omega_{\v_i}, \ i=2,3,...,n-1,
$$ $$\omega^\epsilon_{\v_n}=\omega_{\v_n}+\epsilon.$$ Then, we have 
 
 \begin{itemize}
 \item[\rm  (i)] Let $v_\epsilon = \overline{\gamma}(u_\epsilon)$ be weak solution to problem $(P_{\omega^\epsilon}^f)$,  with $\epsilon \ge 0$. Then, $\epsilon\to u_\epsilon(\v_n)$ is  continuous and  increasing in $[0,+\infty)$, $\epsilon\to u_\epsilon(\v_1)$ is continuous and decreasing in $[0, +\infty)$, and
 \begin{equation}\label{1557dl} 
 \begin{array}{l}\displaystyle
 \lim_{\epsilon\to+\infty}u_\epsilon(\v_n)=+\infty 
 \hbox{ or  } 
 \lim_{\epsilon\to+\infty}u_\epsilon(\v_1)=-\infty.
 \end{array}
 \end{equation}
 
\item[\rm (ii)] Let $v_\epsilon = \overline{\gamma}(u_\epsilon)$ be weak solution  to problem $(P_{\omega^\epsilon}^f)$,  with $\epsilon \le 0$. Then,
$\epsilon\to u_\epsilon(\v_n)$ is   continuous and  decreasing in $(-\infty,0])$,    $\epsilon\to u_\epsilon(\v_1)$ is   continuous and  increasing in $(-\infty,0]$,  and
 \begin{equation}\label{1557dlii} 
 \begin{array}{l}\displaystyle
 \lim_{\epsilon\to-\infty}u_\epsilon(\v_n)=-\infty 
 \hbox{ or  } 
 \lim_{\epsilon\to+\infty}u_\epsilon(\v_1)=-\infty.
 \end{array}
 \end{equation}
 \end{itemize}
\end{lemma}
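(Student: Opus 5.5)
The plan is to generalize the two-edge argument of Remark~\ref{nuevolemma01} to the whole graph, using Lemma~\ref{030326_002} (equivalently Lemma~\ref{antepenultimo}) for monotonicity and the one-edge Lemma~\ref{lemma3} for continuity and blow-up. I only treat (i); (ii) follows by reversing the parametrizations of $\e_1$ and $\e_2$ (changing $f$ as described before Proposition~\ref{lemma2}) and replacing $\epsilon$ by $-\epsilon$.

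\textbf{Monotonicity.} Since $\v_1,\v_n\in\partial\V(\G)$, the only edge at $\v_1$ is $\e_1$ and the only edge at $\v_n$ is $\e_2$. By Proposition~\ref{lemma2}, the restriction of $v_\epsilon$ to $\e_2$ solves $(P^{\gamma_{\e_2},[f]_{\e_2}}_{p_{\e_2},a^\epsilon_{\e_2},\omega_{\v_n}+\epsilon})$, and the restriction to $\e_1$ solves $(P^{\gamma_{\e_1},[f]_{\e_1}}_{p_{\e_1},\omega_{\v_1}-\epsilon,b^\epsilon_{\e_1}})$. Lemma~\ref{030326_002}, applied with base point $\epsilon_0$ and increment $\epsilon-\epsilon_0$, gives directly that $u_\epsilon(\v_n)$ is nondecreasing and $u_\epsilon(\v_1)$ is nonincreasing.

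\textbf{Continuity.} For continuity at $\epsilon_0$ (say $\epsilon_0=0$), I would copy the end of Remark~\ref{nuevolemma01}. Set $\sigma_\epsilon:=-(a^\epsilon_{\e_2}-a^0_{\e_2})$. The Kirchhoff balance at the vertex $\vi_{\e_2}$, combined with the continuity argument of Lemma~\ref{antepenultimo}, should force $0\le\sigma_\epsilon\le\epsilon$. If $\sigma_\epsilon>\epsilon$, Lemma~\ref{lemma3} would give opposite growth at $\vi_{\e_2}$ computed from $\e_2$ and from the rest of the graph, which contradicts continuity.

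With $0\le\sigma_\epsilon\le\epsilon$ in hand, Lemma~\ref{lemma3} (Points 1 and 3, via Remark~\ref{030326_001rem}) bounds the $\e_2$-component from above by the solution $\widetilde u_\epsilon$ of $(P^{\gamma_{\e_2},[f]_{\e_2}}_{p_{\e_2},a^0_{\e_2}-\epsilon,\omega_{\v_n}+\epsilon})$. This gives
\[
0\le u_\epsilon(\v_n)-u_0(\v_n)\le \widetilde u_\epsilon(\ell_{\e_2})-u_0(\ell_{\e_2}),
\]
and the right-hand side tends to $0$ as $\epsilon\to0^+$ by Lemma~\ref{lemma3}. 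The limit $\epsilon\to\epsilon_0^-$ and the vertex $\v_1$ are handled symmetrically.

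\textbf{The limit \eqref{1557dl}.} Take $\epsilon_n\to+\infty$. By mass balance \eqref{MBC1}, $\int_\G(v_{\epsilon_n}-v_0)=0$, while the fluxes entering $\e_2$ at $\v_n$ and leaving $\e_1$ at $\v_1$ both grow by $\epsilon_n$. I would split according to whether $\int_{\e_2}(v_{\epsilon_n}-v_0)=\epsilon_n-\sigma_{\epsilon_n}$ is unbounded or $\sigma_{\epsilon_n}$ is unbounded. In the first case, as in Remark~\ref{nuevolemma01}, $v_{\epsilon_n}$ blows up at some point $y_n$ of $\e_2$ (or of $\e_1$ with the opposite sign). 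The ordering of $u_{\epsilon_n}-u_0$ along the edge, obtained via the sign of the flux difference, then transfers this blow-up to the endpoint $\v_n$ (respectively $\v_1$).

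In the second case a flux of size $\sigma_{\epsilon_n}\to\infty$ passes into the interior graph $\G\setminus\{\e_1,\e_2\}$. The flux difference is $\sigma_{\epsilon_n}$ at the inner endpoint of $\e_2$, so integrating
\[
u_{\epsilon_n}'\ge\varrho^{-1}\bigl(\varrho(u_0')+\sigma_{\epsilon_n}\bigr)
\]
along $\e_2$ forces $u_{\epsilon_n}(\v_n)\to+\infty$, provided $u_{\epsilon_n}(\vi_{\e_2})\ge u_0(\vi_{\e_2})$. In the other sign case the same integration along $\e_1$ forces $u_{\epsilon_n}(\v_1)\to-\infty$.

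\textbf{Main obstacle.} The hard part is this second case. Unlike the two-edge remark, the inner endpoints of $\e_1$ and $\e_2$ are different vertices connected through an arbitrary subgraph, so the quantity $\alpha_n$ from the remark is no longer a single shared value. My approach would be to contract $\e_1$ and $\e_2$ away. I would view $\G'=\G\setminus\{\e_1,\e_2\}$ as a graph whose fluxes change only at $\vf_{\e_1}$ and $\vi_{\e_2}$, by amounts $\pm\sigma'_{\epsilon_n}$. Lemma~\ref{antepenultimo} then gives that $u_{\epsilon_n}(\vi_{\e_2})-u_0(\vi_{\e_2})\ge0$ or $u_{\epsilon_n}(\vf_{\e_1})-u_0(\vf_{\e_1})\le0$. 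Either alternative suffices for the integration above, and I expect this step to need induction on the number of edges.
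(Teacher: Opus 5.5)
Your outline follows the paper's proof step for step:
\begin{itemize}
\item monotonicity from Lemma~\ref{030326_002};
\item bounds on the flux changes $\sigma_\epsilon$, $\mu_\epsilon$ at the inner ends of $\e_2$, $\e_1$, obtained by contradiction with continuity;
\item a one-edge comparison for continuity;
\item for \eqref{1557dl}, the dichotomy $\epsilon_n-\sigma_{\epsilon_n}\to\infty$ versus $\sigma_{\epsilon_n}\to\infty$, closed by Lemma~\ref{antepenultimo} on $\G'$. This is the paper's ``$\alpha_n\ge0$ or $\beta_n\le0$''; no extra induction is needed, since it already sits inside Lemma~\ref{antepenultimo}.
\end{itemize}

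But the continuity step is wrong as written. The $\e_2$-component solves the one-edge problem with data $(a^0_{\e_2}-\sigma_\epsilon,\omega_{\v_n}+\epsilon)$. Since $\sigma_\epsilon\le\epsilon$, we have $a^0_{\e_2}-\sigma_\epsilon\ge a^0_{\e_2}-\epsilon$. So Theorem~\ref{teocontprin001} on that edge gives $[u_\epsilon]_{\e_2}\ge\widetilde u_\epsilon$, which is a lower bound. Remark~\ref{030326_001rem} also only gives lower bounds, so your displayed upper estimate is reversed. The same inversion sits in Remark~\ref{nuevolemma01}, to which the paper defers here, so copying it does not help.

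The repair uses the other half of the flux bound. Because $\sigma_\epsilon\ge0$, $[u_\epsilon]_{\e_2}$ is majorized by the solution $\hat u_\epsilon$ of $(P^{\gamma_{\e_2},[f]_{\e_2}}_{p_{\e_2},a^0_{\e_2},\omega_{\v_n}+\epsilon})$. Then $\hat u_\epsilon(\ell_{\e_2})\to u_0(\v_n)$ by Lemma~\ref{lemma3}, Point 1(ii). At $\v_1$ you argue the same way using $\mu_\epsilon\ge0$ and Point 2(ii).

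You assert $\sigma_\epsilon\ge0$ but only argue the upper half. The lower half is elementary. If $\sigma_\epsilon<0$, both inflows of $\e_2$ increase, so $[u_\epsilon]_{\e_2}(0)>[u_0]_{\e_2}(0)$ by Lemma~\ref{lemma3}, Point 2. On $\G\setminus\e_2$, the fluxes at $\vi_{\e_2}$ and $\v_1$ both decrease, so Theorem~\ref{teocontprin001} gives $u_\epsilon\le u_0$ there. This contradicts continuity at $\vi_{\e_2}$; the same argument works for $\mu_\epsilon$. Note that $\sigma_\epsilon\le\epsilon$ is then not needed for continuity at all.

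There are two further points in the blow-up part.

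First, your ``ordering of $u_{\epsilon_n}-u_0$ along the edge'' needs justification, and it also follows from $\sigma_{\epsilon_n}\ge0$. On $\e_2$, the flux difference $D=\varrho(u_{\epsilon_n}')-\varrho(u_0')$ satisfies $D(0)=\sigma_{\epsilon_n}\ge0$, $D(\ell_{\e_2})=\epsilon_n>0$ and $D'=v_{\epsilon_n}-v_0$. On a component of $\{D<0\}$, $u_{\epsilon_n}-u_0$ would be strictly decreasing. Then $D$ would be nondecreasing and afterwards nonincreasing there, which is incompatible with a negative excursion starting and ending at $0$. Hence $u_{\epsilon_n}-u_0$ is nondecreasing on $\e_2$, and likewise on $\e_1$ using $\mu_{\epsilon_n}\ge0$. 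This also disposes of the paper's case (ii).

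Second, $\G'$ does not see changes $\pm\sigma'_{\epsilon_n}$ of one common size. The flux rises by $\sigma_{\epsilon_n}$ at $\vi_{\e_2}$ and falls by $\mu_{\epsilon_n}$ at $\vf_{\e_1}$, and these differ by $\int_{\G'}(v_{\epsilon_n}-v_0)$. Lemma~\ref{antepenultimo} gives:
\begin{itemize}
\item $u_{\epsilon_n}(\vi_{\e_2})\ge u_0(\vi_{\e_2})$ if $\mu_{\epsilon_n}\le\sigma_{\epsilon_n}$;
\item $u_{\epsilon_n}(\vf_{\e_1})\le u_0(\vf_{\e_1})$ if $\mu_{\epsilon_n}\ge\sigma_{\epsilon_n}$.
\end{itemize}
In the second case you integrate $u_{\epsilon_n}'\ge\varrho^{-1}(\varrho(u_0')+\mu_{\epsilon_n})$ along $\e_1$. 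This works because $\mu_{\epsilon_n}\ge\sigma_{\epsilon_n}$ forces $\mu_{\epsilon_n}\to+\infty$.

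With these repairs the sketch is sound. Your reduction of (ii) to (i) is fine; it yields $\lim_{\epsilon\to-\infty}u_\epsilon(\v_1)=+\infty$, which is what the misprinted second limit in \eqref{1557dlii} must mean.
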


\begin{proof} Observe that $u_\epsilon(\v_n)= [u_\epsilon]_{\e_2}(\ell_{\e_2})$ and
$u_\epsilon(\v_1)=[u_\epsilon]_{\e_1}(0)$. The monotonicity of  $\epsilon\to u_\epsilon(\v_n)$ and $\epsilon\to u_\epsilon(\v_1)$ follows from Lemma~\ref{030326_002}. The main facts here are to prove  the continuity  and the convergence statements. 

 Let us call $[v_\epsilon]_{\e_i}=v_{\epsilon,i}$ and $[u_\epsilon]_{\e_i}=u_{\epsilon,i}$, $\ell_{\e_i}=\ell_i$, $\gamma_{\e_i}=\gamma_i$ and $p_{\e_i}=p_i$. As in the previous remark,  we use  Proposition~\ref{lemma2} to say that
$$v_{\epsilon,i}=\gamma_{i}(u_{\epsilon,i})  \ \hbox{is solution of problem} \ P^{\gamma_i, [f]_{\e_1}}_{ p_i, a_{\e_i}^\epsilon,b_{\e_i}^\epsilon} , \ i= 1,2,$$  for adequate $a_{\e_i}^\epsilon,b_{\e_i}^\epsilon$. Here,
$$  a_{\e_2}^\epsilon= -(|u_{\epsilon,2}'|^{p_2-2}u_{\epsilon,2}')(0), \quad  b_{\e_1}^\epsilon= (|u_{\epsilon,1}'|^{p_1-2}u_{\epsilon,1}')(\ell_{1}),$$
that we can rewrite as
$$a_{\e_2}^\epsilon=a_{\e_2}^0-\sigma_\epsilon,\ b_{\e_1}^\epsilon=b_{\e_1}^0+\mu_\epsilon,$$
where
$$\sigma_\epsilon=  (|u_{\epsilon,2}'|^{p_{2}-2}u_{\epsilon,2}')(0)
 -(|u_{0,2}'|^{p_{2}-2}u_{0,2}')(0),$$
 and
$$\mu_\epsilon=  (|u_{\epsilon,1}'|^{p_{1}-2}u_{\epsilon,1}')(\ell_{1})- (|u_{0,1}'|^{p_{1}-2}u_{0,1}')(\ell_{1}).$$
We also have that
$$\sigma_\epsilon= \omega_{\vi_{\e_2}}-(a_{\e_2}^\epsilon-a_{\e_2}^0)$$
and
$$\mu_\epsilon= \omega_{\vf_{\e_1}}-(b_{\e_1}^\epsilon-b_{\e_1}^0)$$

If $$\sigma_\epsilon>\epsilon \ \hbox{ or }\  \mu_\epsilon>\epsilon$$
we get a contradiction with  the continuity of the solution by using Lemma~\ref{lemma5} and  Lemma~\ref{030326_002}.  The same happens if $\sigma_\epsilon<0$ or $\mu_\epsilon<0$. So,
$$0\le \sigma_\epsilon,\, \mu_\epsilon\le \epsilon.$$
Under this conditions, continuity is proved as in the previous remark.

  Let us  prove~\eqref{1557dl} (the proof of~\eqref{1557dlii}  is similar). Here the discussion begins like in the previous remark.
Take $\epsilon_n$ a sequence such that $\lim_n\epsilon_n=+\infty$. Now, there exists a subsequence that we denote equal,  such that (i) or (ii) happen, where:

\noindent (i) the two following properties hold:
$$(|u_{\epsilon_n,2}'|^{p_{2}-2}u_{\epsilon_n,2}')(x)
 -(|u_{0,2}'|^{p_{2}-2}u_{0,2}')(x)>0\quad\hbox{ for all } 0<x<\ell_2,$$
and
$$(|u_{\epsilon_n,1}'|^{p_{1}-2}u_{\epsilon_n,1}')(x)
 -(|u_{0,1}'|^{p_{1}-2}u_{0,1}')(x)>0\quad\hbox{ for all } 0<x<\ell_1,$$

\noindent (ii) there exists $x_{\epsilon_n}\in (0,\ell_2)$, for each $n$, such that $$(|u_{\epsilon_n,2}'|^{p_{2}-2}u_{\epsilon_n,2}')(x)
 -(|u_{0,2}'|^{p_{2}-2}u_{0,2}')(x)>0\quad\hbox{for all }x_{\epsilon_n}<x<\ell_2,$$
 and 
 $$(|u_{\epsilon_n,2}'|^{p_{2}-2}u_{\epsilon_n,2}')(x_{\epsilon_n})
 -(|u_{0,2}'|^{p_{2}-2}u_{0,2}')(x_{\epsilon_n})=0;$$
 or there exists $x_{\epsilon_n}\in (0,\ell_1)$, for each $n$, such that $$(|u_{\epsilon_n,1}'|^{p_{1}-2}u_{\epsilon_n,1}')(x)
 -(|u_{0,1}'|^{p_{1}-2}u_{0,1}')(x)>0\quad\hbox{for all }x_{\epsilon_n}<x<\ell_1,$$
 and 
 $$(|u_{\epsilon_n,1}'|^{p_{1}-2}u_{\epsilon_n,1}')(x_{\epsilon_n})
 -(|u_{0,1}'|^{p_{1}-2}u_{0,1}')(x_{\epsilon_n})=0.$$

In the case (i), we have that $$u_{\epsilon_n,2}'(x)\ge u_{0,2}'(x) \quad\hbox{ for all } 0<x<\ell_2,$$
and integrating,
\begin{equation}\label{2m26001fl}
u_{\epsilon_n,2}(x)-u_{0,2}(x)\ge u_{\epsilon_n,2}(y)-u_{0,2}(y)\quad\forall\, 0\le y\le x\le\ell_2.
\end{equation}
And $$u_{\epsilon_n,1}'(x)\ge u_{0,1}'(x) \quad\hbox{ for all } 0<x<\ell_1,$$
and integrating,
\begin{equation}\label{2m26001p1fl}
u_{\epsilon_n,1}(x)- u_{0,1}(x)\ge u_{\epsilon_n,1}(y)-u_{0,2}(y)\quad\forall\, 0\le y\le x\le\ell_1.
\end{equation}

Now,
$$\int_0^{\ell_2}(v_{\epsilon_n,2}-v_{0,2})=\epsilon_n-\sigma_{\epsilon_n},$$
and  
$$\int_0^{\ell_1}(v_{\epsilon_n,1}-v_{0,1})=-\epsilon_n+\mu_{\epsilon_n},$$

Hence if $$\lim_n(\epsilon_n-\sigma_{\epsilon_n})=+\infty,$$ by the previous equality, we get that there exists $ y_{n}\in (0,\ell_2)$ such that
$$\lim_n v_{\epsilon_n,2}(y_n)=+\infty,$$
and also,
$$\lim_n u_{\epsilon_n,2}(y_n)=+\infty.$$
Then, by~\eqref{2m26001fl},
$$\lim_n u_{\epsilon_n,2}(\ell_2)=+\infty.$$
Similarly, 
$$\lim_n u_{\epsilon_n,1}(0)=-\infty$$
if 
$$\lim_n(-\epsilon_n+\mu_{\epsilon_n})=-\infty,$$

In the case 
$\lim_n(\epsilon_n-\sigma_{\epsilon_n})\not=+\infty $
and $\lim_n(-\epsilon_n+\mu_{\epsilon_n})\not=-\infty,$
we have that $$\lim_n\sigma_{\epsilon_n}=+\infty,$$
and
$$\lim_n\mu_{\epsilon_n}=+\infty.$$
Now,   on account of~\eqref{2m26001fl} and \eqref{2m26001p1fl},
$$u_{\epsilon_n,1}(y)-u_{0,1}(y)\le u_{\epsilon_n,1}(\ell_1)-u_{0,1}(\ell_1)=:\beta_n$$  and $$\alpha_n:=u_{\epsilon_n,2}(0)-u_{0,2}(0)\le u_{\epsilon_n,2}(x)-u_{0,2}(x)$$  for $0\le y\le\ell_1,\ 0\le x\le \ell_2$.
Now if $\sigma_\epsilon\ge \mu_\epsilon$,  by using Lemma~\ref{antepenultimo}, 
we have that $$\alpha_n \ge 0,$$
and if  $\sigma_\epsilon\ge \mu_\epsilon$
then
$$\beta_n \le 0.$$
Once we   arrive here, we finish as in the previous remark, also in the case  (ii). 
\end{proof}

The following theorem is the main statement of this section. It gives the existence of solutions to $(P_{\omega}^f)$.

\begin{theorem}\label{ExitUniq1}  Let  $\G$ be a   connected and compact metric graph.
  Let $1 < p_\e < \infty$ be and   $\gamma_\e:\R\to \R$ be a continuous and increasing function with $\gamma_\e(\mathbb{R})=\mathbb{R}$ and $\gamma_\e(0)=0$ for all $\e \in E(\G)$.  Then, for all   $f \in L^{1}(\G)$  and all $\omega=\{ \omega_\v: \v \in \V(\G) \} \subset \R$,   there exists a unique weak solution to the problem $(P_{\omega}^f)$.
\end{theorem}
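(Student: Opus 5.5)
Uniqueness is immediate from Theorem~\ref{teocontprin001}: applied twice with $f_1=f_2$ and $\omega^1=\omega^2$, it gives $v_1=v_2$, and then $u_1=u_2$ because each $\gamma_\e$ is injective. The work is existence, which I will prove by induction on the number $N$ of edges of $\G$, following the strategy announced in the introduction. For $N=1$ the graph is an interval with two boundary vertices, and Theorem~\ref{lemma1} applied to $(P_{p,a,b}^{\gamma,f})$ with $a=\omega_{\vi_\e}$, $b=\omega_{\vf_\e}$ gives the solution. Continuity at vertices is automatic there.

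For the inductive step, take $\G$ with $N+1$ edges. I choose an edge $\e$ and a vertex $\v=\vf_\e$ such that cutting $\e$ at $\v$ leaves a connected graph; if necessary I reparametrize $\e$ as described before Proposition~\ref{lemma2}. Such an edge exists: if $\G$ has a leaf edge, I take it. Otherwise $\G$ contains a cycle, and I detach one end of a cycle edge by splitting the vertex, as in the proof of Lemma~\ref{antepenultimo}. After the split, $\G'$ (the remaining graph, with the split vertex $\v'$ still attached to the other edges) has $N$ edges, or equivalently I regard $\G$ as $\G'$ with the extra edge glued.

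The idea is a shooting argument on the flux carried by $\e$ at $\v$. For $s\in\R$, I solve on $\G'$ by induction, with the flux at $\v'$ equal to $\omega_\v-s$. I also solve on the single edge $\e$ by Theorem~\ref{lemma1}, with flux $s$ at $\vf_\e$ and the prescribed data at the other end. In the leaf case the other end is a degree-one vertex with flux $\omega_{\vi_\e}$. In the cycle case, both endpoints of $\e$ belong to $\G'$, and I instead view the unknown split as a perturbation of type $(\omega_{\v}-s,\ \text{flux } s \text{ on the detached copy})$ on the single connected graph obtained by splitting; this is exactly the configuration of Lemma~\ref{lemma51822} and Lemma~\ref{antepenultimo}. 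Denote by $U(s)$ the value at $\v$ on the edge side and by $W(s)$ the value at $\v'$ on the $\G'$ side. By Lemma~\ref{lemma3}(1), $U$ is continuous and increasing with limits $\pm\infty$. By Lemma~\ref{lemma5} applied to $\G'$ with $\epsilon=-s$, $W$ is continuous and nonincreasing. Hence $U-W$ is continuous and strictly increasing, and tends to $\pm\infty$, so the intermediate value theorem gives $s^*$ with $U(s^*)=W(s^*)$. In the cycle case, the corresponding limits come from \eqref{1557dl} and \eqref{1557dlii}, which guarantee that at least one of the two vertex values diverges in the right direction, enough for the difference to change sign.

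Gluing the two solutions at $s^*$ gives a function $u$ that is continuous at every vertex. Its Kirchhoff sums equal $\omega$, because the fluxes $s^*$ and $\omega_\v-s^*$ add to $\omega_\v$ at $\v$, while all other vertices are untouched. Adding the weak formulations \eqref{Weak1NN} and \eqref{Weak1} on the pieces, exactly as in Proposition~\ref{lemma2} but in reverse, then shows that $v=\overline\gamma(u)$ satisfies \eqref{Weak1}. The main obstacle I expect is the cycle case: there the "shooting" perturbation moves fluxes at two vertices of the same connected graph at once. The sign of $U-W$ at $\pm\infty$ then depends on the delicate dichotomy \eqref{1557dl}, and continuity in $s$ relies on the bounds $0\le\sigma_\epsilon,\mu_\epsilon\le\epsilon$ from Lemma~\ref{lemma51822}, which I would invoke directly rather than reprove.
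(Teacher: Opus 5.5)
Your uniqueness argument and your leaf case are the same as the paper's (its case (a)), and they are fine. The gap is in the cycle case. You detach $\e$ only at its endpoint $\v$, so in your split graph the two vertices carrying the perturbation $\pm s$ are the new leaf (the detached end of $\e$) and the vertex $\v'$, which keeps the other $d_\v-1$ edges.

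Lemma~\ref{lemma51822} is stated and proved only when \emph{both} perturbed vertices have degree one and are endpoints of two distinct pendant edges $\e_1\neq\e_2$. Its quantities $\sigma_\epsilon,\mu_\epsilon$ are the flux changes at the inner ends of those two single edges. Both the bounds $0\le\sigma_\epsilon,\mu_\epsilon\le\epsilon$, which give continuity, and the dichotomy \eqref{1557dl} come from applying the one-edge analysis of Lemma~\ref{lemma3} to each of those edges. None of this is available at a vertex where the flux $-s$ is shared by several edges. You also cannot always choose $\v$ of degree two so that $\v'$ becomes a leaf: in $K_4$ every vertex has degree three.

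Lemma~\ref{030326_002} does still give you monotonicity of $U$ and $W$ for arbitrary vertices. Continuity in $s$ could be recovered by a separate compactness-plus-uniqueness argument. What remains unproved for your configuration is that $U-W$ changes sign, and that is exactly what your intermediate value argument needs.

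The missing idea is where to cut. The paper cuts the cycle edge at an interior point rather than at a vertex. It replaces $\e$ by two half-edges, $\e_a$ from $\vi_\e$ to a new leaf $\v_a$ and $\e_b$ from a new leaf $\v_b$ to $\vf_\e$, each of length $\ell_\e/2$ with the data of $\e$. They carry fluxes $\epsilon$ at $\v_a$ and $-\epsilon$ at $\v_b$.

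This auxiliary graph has $N+2$ edges, but it is solved by two successive leaf-case gluings starting from $\G$ with $\e$ removed. That graph is connected thanks to your choice of a cycle edge, a small improvement over the paper, which leaves possible disconnectedness of $\G_N$ to the reader. Now both perturbed vertices are leaves of distinct pendant edges, so Lemma~\ref{lemma51822} applies verbatim and gives $\epsilon$ with $[u^\epsilon]_{\e_a}(\ell_\e/2)=[u^\epsilon]_{\e_b}(0)$. Identifying $\v_a$ with $\v_b$ yields an interior point of $\e$ with net flux $\epsilon-\epsilon=0$, so no flux ever has to be split at an original vertex.

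Separately, note that your split graph already has $N+1$ edges. It must therefore be solved by the leaf case at the same level, not by the induction hypothesis directly. This is harmless, but it should be stated.
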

 
\begin{proof} Uniqueness  is consequence of Theorem \ref{teocontprin001}. We will do the proof   of   existence  by induction on the number $N$ of edges. By Theorem~\ref{lemma1}, the result is true for $N=1$.  

Although it is not necessary we illustrate how we solve the case $N=2$, which gives an idea of the technical steps we do when passing from $N$ to $N+1$ in the induction procedure.   So, let $V(\G) = \{\v_1, \v_2, \v_3 \}$, $E(\G) = \{\e_1, \e_2 \}$, with $\v_1\in \e_1$, $\v_2\in \e_i$, $i=1,2$, and $\v_3\in \e_2$; that is, the graph is a linear graph with $\v_1,\v_3$ boundary vertices and $\v_2$ and interior vertex  (like in Figure~\ref{fig:002}). Let $b\in \mathbb{R}$, by Theorem~\ref{lemma1}, there exists a weak solution $ v^1_b=\gamma_{\e_1}(u^1_b)$ to problem ($P_{p_{\e_1},\omega_{\v_1},\omega_{\v_2}+b}^{\gamma_{{\e_1}},[f]_{\e_1}}$), and there exists  a weak solution $ v^2_b=\gamma_{\e_2}(u^2_b)$ to problem ($P_{p_{\e_2},-b,\omega_{\v_3}}^{\gamma_{{\e_2}},[f]_{\e_2}}$). In the case where $u^1_0(\ell_{\e_1}) = u^2_0(0)$,   by defining
$$  [v]_{\e_1}:= v^1_0,   \ [v]_{\e_2}:= v^2_0,$$
($[u]_{\e_1}:= u^1_0,   \ [u]_{\e_2}:= u^2_0$) we have that the function $v$ 
is the only weak solution to the problem~$(P_\omega^{f})$. Suppose now that $u^1_0(\ell_{\e_1}) \not= u^2_0(0)$, for example, $u^1_0(\ell_{\e_1}) < u^2_0(0)$. Then applying Lemma \ref{lemma3}, we can take $b>0$ large enough to reach $u^1_b(\ell_{\e_1}) = u^2_b(0)$, and consequently, {\it gluing the solutions,} we obtain   the only  weak solution to the problem  $(P_{\omega}^f)$, that is,   by defining  
$$   [v]_{\e_1}:= v^1_b,  \ [v]_{\e_2}:= v^2_b.$$
($[u]_{\e_1}:= u^1_b, \ [u]_{\e_2}:= u^2_b).$

By induction, suppose that the result is true for metric graphs with $N$ edges and let us see that it is true for metric graphs with $N+1$ edges.
So, let $\G_{N+1}$ be a metic graph with $\E(\G_{N+1}) = \{ \e_1, \ldots, \e_N, \e_{N+1} \}$, $\V(\G_ {N+1}) = \{ \v_1, \ldots, \v_m \}$ and $\omega = \{\omega_{\v_1}, \ldots, \omega_{\v_m} \}$. We want to solve $(P_\omega^f)$, with  $\overline{p}$ and $\overline{\gamma}$ also given. Let us now consider two situations:

\noindent (a) The graph $\G_{N+1}$ has at least a vertex with degree $1$  (that is, a boundary vertex). Suppose that such a vertex is $\v_m$ and that $\v_{m-1},\v_m\in \e_{N+1}$, and that the parametrization given in the formulation for  problem $(P_\omega^f)$ of $\e_{N+1}$ is such that $\vi_{\e_{N+1}}=\v_{m-1}$ and $\vf_{\e_{N+1}}=\v_m$.

Consider the graph $\G_N$ with $\E(\G_{N}) = \{ \e_1, \ldots, \e_N \}$, $\V(\G_{N}) = \{ \v_1, \ldots, \v_{m-1} \}$, and take $\epsilon\in \mathbb{R}$. For this graph we  consider the problem $(P_{\omega^\epsilon}^f)$ with $\omega^{\epsilon}$ given by 
$$\omega^\epsilon_{\v_i}=\omega_{\v_i},\ i=1,2,...,m-2,$$
$$\omega^\epsilon_{\v_{m-1}}=\omega_{\v_{m-1}}+\epsilon,$$
being the corresponding $f$, $\overline{p}$ and $\overline{\gamma}$ the same associated to $\e_i$ for problem $(P_\omega^f)$ given on the graph $\G_{N+1}$ (we avoid to introduce further notation for simplicity).

Consider also the graph given by one edge with initial vertex $\v_a$ and final vertex $\v_b$, parametrized in $(0,\ell_{\e_{N+1}})$. 
For this graph we consider the problem, written in terms of the parametrization,
$$(P_{p_{\e_{N+1}},-\epsilon,\omega_{\v_m}}^{\gamma_{\e_{N+1}},[f]_{\e_{N+1}}}).$$
 
By   hypothesis of induction, we have that there exists  a weak solution $\tilde v_{\epsilon}=\overline{\gamma}(\tilde u_{\epsilon})$ to the problem $(P_{\omega^\epsilon}^f)$. And we also have     a weak solution $ v_{\epsilon}=\gamma_{\e_{N+1}}(u_{\epsilon})$ to the problem  $(P_{p_{\e_{N+1}},-\epsilon,\omega_{\v_m}}^{\gamma_{\e_{N+1}},[f]_{\e_{N+1}}})$.

By Lemma \ref{lemma3}  and Lemma \ref{lemma5}, using the same idea given for the case $N=2$, there exists $\epsilon$ such that 
$$[\tilde{u}_{\epsilon}]_{\e_N}(\v_{m-1}) = u_{\epsilon}(\v_a).$$
 Then, gluing these solutions, vertex $\v_{m-1}$ is identified with $\v_a$, and $\v_m$ with $\v_b$, we get a solution to  problem $(P_\omega^f)$, we leave the details for the reader.

\noindent (b) All the vertices of the graph $\G_{N+1}$ have degree larger or equal than $2$  (that is, all the vertices are interior vertices).  
 Suppose that  $\v_{m-1},\v_m\in \e_{N+1}$, and that the parametrization given in the formulation for  problem $(P_\omega^f)$ of $\e_{N+1}$ is such that $\vi_{\e_{N+1}}=\v_{m-1}$ and $\vf_{\e_{N+1}}=\v_m$.

Consider the graph $\G_N$ with $\E(\G_{N}) = \{ \e_1, \ldots, \e_N \}$, $\V(\G_{N}) = \{ \v_1, \ldots, \v_{m} \}$ (the same vertices as for $G_{N+1}$, we are removing one edge).  This procedure may, in some situations, yield a disconnected graph, however, this does not pose a problem. In fact, this is a simpler case for the subsequent arguments (the details of which are left to the reader). Therefore, we  argue as if $\G_N$ were connected.

Take $\epsilon\in \mathbb{R}$. Following the idea  of step (a),  with two independent  gluing steps, we can solve problem $(P_{\omega^\epsilon}^f)$ for the graph $\widetilde{\G_N}$ with $\E(\widetilde{\G_N}) = \E(\G_{N})\cup \{\e_a,\e_b\}$, and $m+2$ vertices $\{\v_1, \ldots, \v_{m},\v_a,\v_b\}$, such that

-- $(\E(\G_N),\V(\G_{N}))$ is a subgraph of $\widetilde{\G_N}$,

-- $\e_a$ is an edge connecting $\v_{m-1}$ and $\v_a$, $\v_a$ is a boundary vertex, $\vi_{\e_a}=\v_{m-1}$, $\vf_{\e_a}=\v_a$, and $\e_a$ is parametrized in   $(0,\ell_{\e_{N+1}}/2)$, 

-- $\e_b$ is an edge connecting $\v_{m}$ and $\v_b$, $\v_b$ is a boundary vertex, $\vi_{\e_b}=\v_b$, $\vf_{\e_b}=\v_{m}$, and $\e_b$ is parametrized in $(0,\ell_{\e_{N+1}}/2)$,

-- $\omega^{\epsilon}$ given by 
$$\begin{array}{c}\displaystyle\omega^\epsilon_{\v_i}=\omega_{\v_i},\ i=1,2,...,m,\\ \\
\displaystyle\omega^\epsilon_{\v_a}=\epsilon, \quad 
\omega^\epsilon_{\v_b}=-\epsilon,
\end{array}$$
and being the corresponding $f$, $\overline{p}$ and $\overline{\gamma}$ the same associated to $\e_i$, $i=1,2,...,N$, for problem $(P_\omega^f)$ given on the graph $\G_{N+1}$, and 

-- for $\e_a$: $p_{\e_a}=p_{N+1}$, $\gamma_{\e_a}=\gamma_{N+1}$, $[f]_{\e_a}= [f]_{\e_{N+1}}$; and 

-- for $\e_b$: $p_{\e_b}=p_{N+1}$, $\gamma_{\e_b}=\gamma_{N+1}$, $[f]_{\e_b}(x)= [f]_{\e_{N+1}}(x+\ell_{\e_{N+1}}/2)$ for $x\in (0,\ell_{N+1}/2)$.

Then, applying Lemma~\ref{lemma51822}, we can find $\epsilon$ such that  the  solution $v^\epsilon=\gamma_{N+1}(u^\epsilon)$ of the above problem satisfies $$[u^\epsilon]_{\e_a}(\ell_{\e_{N+1}}/2)=[u^\epsilon]_{\e_b}(0).$$
Hence, by joining $\v_a$ and $\v_b$, we can glue the above solutions to obtain a solution to problem $(P_\omega^f)$.
\end{proof}

\begin{remark}\rm  Observe that in the above proof we  construct solutions for a complex quantum  graph by gluing solutions of {\it smaller}    graphs. But, once we have shown existence and uniqueness of solutions  for problem~$(P_\omega^f)$, on account of Proposition~\ref{lemma2}, we can construct the solution in the following natural way.  For each edge $\e\in\E(\G)$, solve the $1$-dimensional ode problem ($P_{p_\e,a_\e,b_\e}^{\gamma_\e,[f]_\e}$) for arbitrary $a_\e,b_\e$, and get a solution $\gamma_\e(u_\e^{a_\e,b_\e})$ (it is unique for each pair $a_\e,b_\e$). Then, solve the system  that gives the Neumann-Kirchhoff condition, that is,
\begin{equation}\label{wwg1333otro}
\left\{\begin{array}{l}
\displaystyle \sum_{\e:\vi_\e=\v}a_\e+\sum_{\e:\vf_\e=\v}b_\e=\omega_\v,\quad \v\in \V(\G),\\ \\
\displaystyle u_\e^{a_\e,b_\e}(0)=u_{\tilde \e}^{a_{\tilde \e},b_{\tilde \e}}(0),\quad \hbox{for }  \e,{\tilde \e}:\vi_\e=\vi_{\tilde \e}\,,
\\ \\
\displaystyle u_\e^{a_\e,b_\e}(0)=u_{\tilde \e}^{a_{\tilde \e},b_{\tilde \e}}(\ell_{\tilde\e}),\quad \hbox{for } \e,{\tilde \e}:\vi_\e=\vf_{\tilde \e}\,,
\\ \\
\displaystyle u_\e^{a_\e,b_\e}(\ell_\e)=u_{\tilde \e}^{a_{\tilde \e},b_{\tilde \e}}(\ell_{\tilde\e})\quad \hbox{for } \e,{\tilde \e}:\vf_\e=\vf_{\tilde \e}\,,
\end{array}\right.
\end{equation}
to find $\{a_\e,b_\e\}_{\e\in\E(\G)}$.  Finally, the solution of~$(P_\omega^f)$ is $\overline{\gamma}(u)$ where $u$ is given by $[u]_\e:=u_\e^{a_\e,b_\e}$, for all $\e\in\E(\G)$. 
 The existence and uniqueness of solutions for~$(P_\omega^f)$ guarantees that the above system is solvable and has a unique solution.  \hfill $\blacksquare$
\end{remark}

\begin{proposition}\label{laacotLinfdeleliptico001}
   Let  $\G$ be a  connected and compact metric graph.
  Let $1 < p_\e < \infty$ be and   $\gamma_\e:\R\to \R$ be a continuous and increasing function with $\gamma_\e(\mathbb{R})=\mathbb{R}$ and $\gamma_\e(0)=0$ for all $\e \in E(\G)$.
   For $v = \overline{\gamma}(u)$  the weak solution  to $(P_{0}^{f})$ (in this case $\omega_\v=0$ for all $\v$), we have  
$$ v\ll v +\lambda(f-v)\ \hbox{  for all }\lambda>0.$$
In particular, for $f\in L^\infty(\G)$, $||v||_\infty\le ||f||_\infty$.
\end{proposition}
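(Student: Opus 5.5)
We reduce the statement to the characterization in Proposition~\ref{inq1BC}(1). With $w:=f-v$, it suffices to prove
$$\int_\G (f-v)\,\chi_{\{v<-h\}}\,dx\le 0\le \int_\G (f-v)\,\chi_{\{v>h\}}\,dx\qquad\hbox{for all } h>0 .$$
Everything then comes from testing the weak formulation~\eqref{Weak1} with $\omega\equiv 0$ against truncations of $u$.

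For $h>0$ and an edge $\e$, set $k_\e^+:=\sup\{r:\gamma_\e(r)\le h\}$. Since $\gamma_\e$ is continuous, increasing and onto, we have $\{[v]_\e>h\}=\{[u]_\e>k_\e^+\}$. The trouble is that $k_\e^+$ depends on $\e$, so $(u-k^+)^+$ need not be continuous at the vertices. We therefore use the test function $\varphi_\delta:=H_\delta(v)$ instead, with $H_\delta(r):=\min\{1,(r-h)^+/\delta\}$.

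The first point to check is that $\varphi_\delta$ is admissible, i.e.\ lies in $\mathcal W^{1,\overline p}(\G)$. We have $[v]_\e=\gamma_\e([u]_\e)$ with $[u]_\e\in C^1$. However, $\gamma_\e$ is only continuous, so $H_\delta(\gamma_\e(u))$ need not be Sobolev. Continuity at the vertices also fails a priori, because the values $\gamma_\e(u(\v))$ may differ from edge to edge.

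To get around this, we first assume that all $\gamma_\e$ are smooth with $\gamma_\e'>0$ and that $\gamma_\e=\gamma$ near the vertex values. The general case then follows by approximating $\gamma_\e$ and using the $L^1$-contraction/stability (Theorem~\ref{teocontprin001}-type estimates) to pass to the limit. The alternative, which we will try first, is to note that at a vertex $\v$ the common value $u(\v)$ gives possibly different $v$-values on different edges. We then simply work with $\varphi:=\frac1\delta T_\delta\big((u-k)^+\big)$ for a single level $k$, handling each level set separately.

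Plugging in $\varphi$, the diffusion term
$$\sum_\e\int |[u]_\e'|^{p_\e-2}[u]_\e'\,[\varphi]_\e'$$
is nonnegative, since $\varphi$ is a nondecreasing function of $u$. Hence
$$\int_\G (f-v)\,\varphi\ge 0 .$$
Letting $\delta\to0$ gives $\int_\G (f-v)\chi_{\{u>k\}}\ge0$ for every $k$, and likewise $\int_\G (f-v)\chi_{\{u<-k\}}\le0$.

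The main obstacle is translating the $u$-level sets into the $v$-level sets $\{v>h\}$ when the $\gamma_\e$ differ. Our plan is to prove the inequality first for test functions $\varphi=\psi(u)$ with $\psi$ nondecreasing, Lipschitz and $\psi=0$ near $-\infty$. The sign argument above works for any such $\psi$, so $\int_\G (f-v)\psi(u)\ge0$ for this whole class. We then try to approximate $\chi_{\{v>h\}}$ by such functions. This is exact when the $\gamma_\e$ coincide, and in general we expect to need the smoothing/approximation route above. The identity in~\eqref{distribu} should also be used to check the edge integrals directly.

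Finally, the $L^\infty$ bound comes from Proposition~\ref{inq1BC}(2) with $p=\infty$. It gives $\|v\|_\infty\le\|v+\lambda(f-v)\|_\infty$, and taking $\lambda=1$ yields $\|v\|_\infty\le\|f\|_\infty$.
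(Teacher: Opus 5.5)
Your core computation is exactly the paper's proof: you test \eqref{Weak1} with $\frac1\delta T_\delta((u-k)^+)$ (resp.\ $\frac1\delta T_\delta((k-u)^+)$), drop the diffusion term by its sign, and let $\delta\to0$ to get $\int_\G(f-v)\chi_{\{u>k\}}\ge 0\ge\int_\G(f-v)\chi_{\{u<k\}}$. This is complete when all the $\gamma_\e$ equal one $\gamma$, since then $\{v>h\}=\{u>k\}$ for a single level $k$. The paper simply writes $k=\gamma^{-1}(h)$, i.e.\ it makes this assumption tacitly.

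The genuine gap is the one you point out yourself, and none of your remedies closes it:
\begin{itemize}
\item For edge-dependent $\gamma_\e$, $\chi_{\{v>h\}}$ is not a function of $u$ alone, so no limit of admissible tests $\psi(u)$ produces it.
\item $H_\delta(v)$ jumps at the vertices however smooth the $\gamma_\e$ are.
\item Replacing the $\gamma_\e$ by functions that agree near the (solution-dependent) vertex values does not approximate the original problem when the $\gamma_\e$ disagree there. Besides, Theorem~\ref{teocontprin001} gives stability in $f$ and $\omega$, not in $\overline\gamma$.
\end{itemize}

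In fact the gap cannot be closed, because for edge-dependent $\gamma_\e$ the statement is false. Take the path with vertices $\v_1,\v_2,\v_3$ and edges $\e_1=(\v_1,\v_2)$, $\e_2=(\v_2,\v_3)$ of length $1$. Set $p_{\e_1}=p_{\e_2}=2$, $\gamma_{\e_1}(r)=r$, $\gamma_{\e_2}(r)=4r$, $\omega=0$, and $f=1$ on $\e_1$, $f=0$ on $\e_2$. Let $c=\tanh 1/(\tanh 1+2\tanh 2)\approx 0.283$, and define $[u]_{\e_1}(x)=1+\frac{c-1}{\cosh 1}\cosh x$ and $[u]_{\e_2}(x)=c\cosh(2(1-x))/\cosh 2$. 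These satisfy:
\begin{itemize}
\item both edge equations;
\item $[u]_{\e_1}'(0)=[u]_{\e_2}'(1)=0$;
\item continuity at $\v_2$;
\item the Kirchhoff condition $[u]_{\e_1}'(1)-[u]_{\e_2}'(0)=(c-1)\tanh1+2c\tanh2=0$.
\end{itemize}
So they give the unique weak solution of $(P_0^f)$. But $[v]_{\e_2}(0)=4c\approx1.13>1=\|f\|_\infty$. For $h\in(1,4c)$, the set $\{v>h\}$ is a neighbourhood of $\v_2$ inside $\e_2$, where $f-v=-v<0$, so $\int_\G(f-v)\chi_{\{v>h\}}<0$. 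With $\gamma_{\e_2}(r)=ar$ the same computation gives $[v]_{\e_2}(0)=a\tanh1/(\tanh1+\sqrt a\tanh\sqrt a)\to\infty$.

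The point is that the comparison principle acts on the continuous variable $u$, while $v=\gamma_\e(u)$ jumps at the vertices. What your argument proves in general is the $u$-level inequality above. The relation $v\ll v+\lambda(f-v)$ and the bound $\|v\|_\infty\le\|f\|_\infty$ need the extra hypothesis $\gamma_\e=\gamma$ for all $\e$. For general $\overline\gamma$, comparison with the constant solutions $u\equiv k$ only yields an $L^\infty$ bound that depends on the $\gamma_\e$.
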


\begin{proof}
Let us prove that
$$\int_\G(f-v)\chi_{\{v<-h\}}dx\le 0\le  \int_\G (f-v)\chi_{\{v>h\}}dx\quad\forall h>0.$$ This implies that (see~Proposition~\ref{inq1BC})
$ v\ll v +\lambda(f-v)$ for all $\lambda>0$; in particular, for $\lambda=1$, we get $v\ll f$, and $||v||_\infty\le ||f||_\infty$ (see again~Proposition~\ref{inq1BC}).

Take $h>0$ and set $k=\gamma^{-1}(h)$. For $\epsilon>0$, taking $\varphi=\frac{1}{\epsilon}T_\epsilon(u-k)^+$ as test function in \eqref{Weak1}, and  mislead the non-negative  terms involved with $|[u]_\e'|^{p_\e-2}[u]_\e'$, we  get
$$\int_G(f-v)\frac{1}{\epsilon}T_\epsilon(u-k)^+dx\ge 0.$$
Now, taking limits as $\epsilon\to 0$ we obtain that 
$$\int_\G (f-v)\chi_{\{u>k\}} dx\ge 0$$
or equivalently,
$$\int_\G (f-v)\chi_{\{v>h\}} dx\ge 0.$$
The another inequality follows in a similar way.
\end{proof}

\subsection{The parabolic problem on a metric graph.}
    The aim of this section is to solve first the parabolic Problem~\eqref{evpbintro1711} in the nonlinear semigroup sense and, afterwards, characterize the solutions obtained in this way as the unique weak solutions of the problem. 

 Observe that, on account of the notation introduced in   Remark~\ref{defdelplaplacianoenG} we can rewrite Problem~\eqref{evpbintro1711} as
 $$  \hbox{\eqref{evpbintro1711}}\quad \left\{\begin{array}{l}\displaystyle
\frac{\partial v}{\partial t}-\Delta^{\G,\omega}_{\overline{p}}u=f\quad\hbox{in } (0,+\infty)\times \G,\\
\\{}
v=\overline{\gamma}(u)\quad\hbox{in } (0,+\infty)\times\G,
\\ \\
\hbox{$u$ continuous,}
\\ \\
v(0)=v_0.
\end{array}\right.$$

Set $X:=L^{1}(\G)\times L^1(\V(\G))$. Here $L^1(\V(\G))$ is indeed the finite space of functions from $\V(\G)$ to~$\mathbb{R}$, that is $\mathbb{R}^m$ with $m=\hbox{card}(\V( \G))$, that we consider with the $1$-norm. For a function $\omega\in L^1(\V(\G))$ we will write $\omega(\v)$ also as $\omega_\v$.

Let    $f\in L^1(0,T;L^{1}(\G))$ and $\omega\in L^1(0,T;L^1(\V(\G)))$. Problem~\eqref{evpbintro1711},  posed in $(0,T)$, can be rewritten 
as  the following abstract Cauchy problem in $X$:
\begin{equation}\label{ACPI}
\left\{ \begin{array}{ll} W'(t) + \mathcal{A} (W(t)) \ni (f(t),\omega(t)) \quad &  t \in (0,T), \\[10pt] W(0) = (v_0,0) \quad & v_0  \in L^1(\G),   \end{array} \right.
\end{equation}
being $\mathcal{A} $ the operator:
$$
\mathcal{A}  =  \{((v,0), (\widetilde v,\omega)) \in  X\times X  :   v \hbox{ is a solution of } (P_{\omega}^{v+\widetilde v})\}.$$
   We have that $$((v,0), (\widetilde v,\omega)) \in \mathcal{A} \iff \hbox{there exist } u\in \mathcal{W}^{1,\overline{p}}(\G), \ v=\gamma(u)\ \hbox{ a.e. in } \G,$$
such that
 $$-\Delta_{\overline{p}}^{\G,\omega}u=\tilde v\,,$$ that is
\begin{equation}\label{Weak1rep01} \sum_{\e \in\E(\G)}\int_{0}^{\ell_\e}|[u]_\e'|^{p_\e-2}[u]_\e' [\varphi]_\e' \, \,dx  = \int_\G \widetilde v \varphi \, \,dx +   \sum_{\v\in \V(\G)}\omega_\v\varphi(\v)\quad \forall \, \varphi \in \mathcal{W}^{1,\overline{p}}(\G). \end{equation}

\begin{remark}\rm 
For standard Neumann-Kirchhoff conditions, that is, when $$\omega_\v=0\quad\forall\v\in\V(\G),$$ Problem~\eqref{evpbintro1711} can be rewritten  as the abstract Cauchy problem in $L^{1}(\G)$:
\begin{equation}\label{ACPIsub0}
\left\{ \begin{array}{ll} V'(t) + \mathcal{A}_0 (V(t)) \ni f \quad &  t \in (0,T), \\[10pt] V(0) = v_0 \quad & v_0 \in L^1(\G),   \end{array} \right.
\end{equation}
being now $\mathcal{A}_0 $ the operator:
$$
\mathcal{A}_0  =  \{(v, \widetilde v) \in L^{1}(\G) \times L^{1}(\G)   :  v \hbox{ a solution of } (P_{0}^{v+\widetilde v})\}.$$
Here we have that $(v,\tilde v)\in \mathcal{A}_0$ iff there exists $u\in \mathcal{W}^{1,\overline{p}}(\G), \ v=\gamma(u)  \hbox{ a.e. in } \G,$
such that
 $-\Delta_{\overline{p}}^{\G,0}u=\tilde v$. In this case, some of the arguments that follow are simpler than in the general case.

It is worthy to mention than dealing with non-null Neumann-Kirchhoff conditions has an extra difficulty that, following~\cite{AndIgMazTo06}, can be solved by considering an operator in  $X\times X$ in the way we have done. We will adapt some arguments of our paper~\cite{AndIgMazTo06}  to this scenario.  
   \hfill $\blacksquare$
\end{remark}
 
\begin{theorem}\label{Dominio} Under the assumptions of Theorem~\ref{ExitUniq1} we have that the operator $\mathcal{A}$ is  $m$-T-accretive and satisfies 
\begin{equation}\label{qdomaindensity01}
\overline{D(\mathcal{A})}=L^{1}(\G)\times\{0\}.
\end{equation}
\end{theorem}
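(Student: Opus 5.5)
We establish three properties of $\mathcal{A}$ in turn: T-accretivity, the range condition, and density of the domain.

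\emph{T-accretivity.} Let $((v_i,0),(\widetilde v_i,\omega^i))\in\mathcal{A}$ for $i=1,2$, and $\lambda>0$. By the definition of $\mathcal{A}$, $v_i$ is the weak solution of $(P_{\lambda\omega^i}^{v_i+\lambda\widetilde v_i})$ after rescaling the $p$-Laplacian term by $\lambda$. More directly, we repeat the proof of Theorem~\ref{teocontprin001} on the identities~\eqref{Weak1rep01}.
\begin{enumerate}
\item Test both identities with $\varphi=\frac1k T_k(u_1-u_2)^+$ and subtract.
\item Multiply by $\lambda$ and add $\int_\G (v_1-v_2)\varphi$ to both sides.
\item Drop the nonnegative $p_\e$-Laplacian terms and let $k\to\infty$.
\end{enumerate}
This yields
$$\int_\G (v_1-v_2)^+\le \int_\G (v_1-v_2+\lambda(\widetilde v_1-\widetilde v_2))^+ +\lambda\sum_{\v}(\omega^1_\v-\omega^2_\v)^+ .$$
With the $1$-norm on $X$ and first components of $W_i$ equal to $(v_i,0)$, this is exactly
$$\|(W_1-W_2)^+\|_X\le\|(W_1-W_2+\lambda(Z_1-Z_2))^+\|_X .$$
We need to check the bookkeeping: the second component of $W_1-W_2+\lambda(Z_1-Z_2)$ is $\lambda(\omega^1-\omega^2)$, so its positive part is $\lambda(\omega^1-\omega^2)^+$. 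This is consistent.

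\emph{Range condition.} Given $(g,\eta)\in X$ and $\lambda>0$, we look for $(v,0)$ with $(v,0)+\lambda\mathcal{A}(v,0)\ni(g,\eta)$. Writing out the components, $\lambda\omega=\eta$ and $v+\lambda\widetilde v=g$. In other words, we need $u\in\mathcal{W}^{1,\overline p}(\G)$ with $v=\overline\gamma(u)$ and
$$\int_\G v\varphi+\lambda\sum_\e\int |[u]'_\e|^{p_\e-2}[u]'_\e[\varphi]'_\e=\int_\G g\varphi+\sum_\v \eta_\v\varphi(\v).$$
Dividing by $\lambda$ turns this into a problem of the form $(P^f_\omega)$ with $\gamma_\e$ replaced by $\gamma_\e/\lambda$, which still satisfies the hypotheses. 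Hence Theorem~\ref{ExitUniq1} gives existence. Then $\widetilde v=(g-v)/\lambda$ and $\omega=\eta/\lambda$, and these satisfy~\eqref{Weak1rep01} with $\omega$ in place. So $\mathcal{A}$ is $m$-T-accretive.

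\emph{Density of the domain.} The inclusion $\overline{D(\mathcal{A})}\subset L^1(\G)\times\{0\}$ is trivial. For the converse, by density it suffices to approximate $v_0\in L^\infty(\G)$, or even $v_0\in\mathcal{D}(\G)$. Take $v_\lambda$ to be the weak solution of $(P^{v_0}_0)$ with operator $\lambda\Delta_{\overline p}$, i.e.\ the resolvent $(I+\lambda\mathcal{A})^{-1}(v_0,0)$, which lies in $D(\mathcal{A})$. We show $v_\lambda\to v_0$ in $L^1(\G)$ as $\lambda\to0$.
\begin{enumerate}
\item By Proposition~\ref{laacotLinfdeleliptico001}, applied to the rescaled problem, $v_\lambda\ll v_0$. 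So $\|v_\lambda\|_\infty\le\|v_0\|_\infty$, and hence $u_\lambda$ is uniformly bounded, since $\gamma_\e^{-1}$ is continuous.
\item Take $v_0=\overline\gamma(w)$ with $w\in\mathcal{W}^{1,\overline p}(\G)$ smooth. Test with $\varphi=u_\lambda-w$. This gives
$$\int_\G (v_\lambda-v_0)(u_\lambda-w)+\lambda\sum_\e\int |u_\lambda'|^{p_\e}\le \lambda\sum_\e\int|u_\lambda'|^{p_\e-1}|w'| .$$
By Young's inequality the right-hand side is controlled, so $\int_\G(v_\lambda-v_0)(u_\lambda-w)\le C\lambda\to0$.
\item Monotonicity of $\gamma_\e$ makes the integrand nonnegative, so it converges to $0$ a.e.\ along a subsequence. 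Strict monotonicity then gives $u_\lambda\to w$ a.e.
\item The uniform $L^\infty$ bound and dominated convergence give $v_\lambda\to v_0$ in $L^1$.
\end{enumerate}
This approximates every $v_0\in\overline\gamma(\mathcal{W}^{1,\overline p}(\G))$. That class contains $\overline\gamma(\varphi)$ for all $\varphi\in\mathcal{D}(\G)$, and the class $\{\overline\gamma(\varphi):\varphi\in\mathcal{D}(\G)\}$ is dense in $L^1(\G)$. To see this, given $h\in C_c$ on the edges, take $\varphi=\gamma_\e^{-1}(h)$, which is continuous and compactly supported; mollifying it approximates $h$ uniformly after applying $\gamma_\e$.

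The main obstacle I anticipate is this density step, in particular the a.e.\ convergence argument and ensuring that the test function $u_\lambda-w$ is admissible, that is, continuous at the vertices. Choosing $w\in\mathcal{D}(\G)$, which vanishes near the vertices, handles the latter.
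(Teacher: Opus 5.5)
Your proof is correct. The accretivity and range parts match the paper, which simply invokes Theorems~\ref{teocontprin001} and~\ref{ExitUniq1}. Your rescaling $\gamma_\e\mapsto\gamma_\e/\lambda$ is a clean way to get the range condition for every $\lambda>0$: rescaling $u$ instead would break continuity at the vertices when the $p_\e$ differ. That said, $\lambda=1$ already suffices. One slip: with the normalization $\frac1k T_k(u_1-u_2)^+$ the limit must be $k\to0^+$, not $k\to\infty$, since only then does $\frac1kT_k(r)^+\to\chi_{\{r>0\}}$. The paper has the same typo.

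For the density of the domain, your route genuinely differs from the paper's.

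\emph{The paper's argument.} It works with an arbitrary $g\in L^\infty(\G)$ and the resolvents $v_n$.
\begin{enumerate}
\item Testing with $u_n$ gives $\frac1n\sum_\e\int|[u_n]_\e'|^{p_\e}\le\int_\G g\,u_n\le M_2$.
\item From this it deduces $\bigl|\int_\G (g-v_n)\varphi\bigr|\le M_3 n^{-1/p_\infty}$ for each fixed $\varphi\in\mathcal{W}^{1,\overline p}(\G)$.
\item It upgrades this weak convergence to strong $L^1$ convergence through the B\'enilan--Crandall order $v_n\ll g$, using items (3) and (4) of Proposition~\ref{inq1BC}: weak compactness of $\{u: u\ll g\}$ and the strong-convergence criterion.
\end{enumerate}

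\emph{Your argument.} You restrict to the dense class $\overline\gamma(\mathcal{D}(\G))$ and test with $u_\lambda-w$. This yields $0\le\int_\G(\overline\gamma(u_\lambda)-\overline\gamma(w))(u_\lambda-w)\le C\lambda$, hence a.e. convergence by strict monotonicity. You finish with dominated convergence, using the $L^\infty$ bound from Proposition~\ref{laacotLinfdeleliptico001}.

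\emph{Comparison.} Your argument is more elementary: it avoids the weak-compactness and strong-convergence criteria of Proposition~\ref{inq1BC} and needs only the norm comparison in item (2). The price is an extra density lemma for $\overline\gamma(\mathcal{D}(\G))$ in $L^1(\G)$, which relies on continuity of $\gamma_\e^{-1}$ and on $\gamma_\e^{-1}(0)=0$. The paper's argument treats every bounded $g$ directly, with no approximation of $g$ by images of smooth functions.
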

\begin{proof}  The $m$-T-accretivity of the operator $\mathcal{A}$ is consequence of Theorem~\ref{ExitUniq1}  and Theorem~\ref{teocontprin001}. 

 Let us prove~\eqref{qdomaindensity01}.   Given $g \in L^\infty(\G)$,  by Theorem~\ref{ExitUniq1},  we have that there exist $(v_n,0) \in D(\mathcal{A})$ such that $$(v_n,0) = \left(I + \frac{1}{n} \mathcal{A}\right)^{-1}(g,0).$$ 
Let us see that $(v_n,0)\to (g,0)$ in $X$, that is, $v_n\to g$ in $L^1(\G)$.

We have that
$((v_n,0), (n(g - v_n),0)) \in \mathcal{A}$. 
Hence,
\begin{equation}\label{masclaro}
v_n \ \hbox{is a weak solution of problem} \ (P_{0}^{v_n+n(g-v_n)}). 
\end{equation}
Thus, there exists $u_n \in \mathcal{W}^{1,\overline{p}}(\G)$, with $v_n = \overline{\gamma}(u_n)$, satisfying
\begin{equation}\label{Weak1rep01NN} \sum_{\e \in\E(\G)}\int_{0}^{\ell_\e}|[u_n]_\e'|^{p_\e-2}[u_n]_\e' [\varphi]_\e' \, \,dx  = n \int_\G (g - v_n) \varphi \, \,dx  \quad \forall \, \varphi \in \mathcal{W}^{1,\overline{p}}(\G). \end{equation}
Hence,   for   $\varphi \in \mathcal{W}^{1,\overline{p}}(\G)$, by H\"older's inequality, we have:
$$\left\vert \int_\G (g - v_n) \varphi \, \,dx \right\vert \leq \frac{1}{n} \sum_{\e \in\E(\G)}\int_{0}^{\ell_\e}|[u_n]_\e'|^{p_\e-1} \vert [\varphi]_\e'\vert \, \,dx $$
$$\leq  \sum_{\e \in\E(\G)}  \left( \frac{1}{n}\int_{0}^{\ell_\e}|[u_n]_\e'|^{p_\e} \right)^{\frac{p_\e -1}{p_\e}} \left( \frac{1}{n}\int_{0}^{\ell_\e} \vert [\varphi]_\e'\vert^{p_\e} \right)^{\frac{1}{p_\e}}$$
$$\le  \frac{M_1}{n^{1/p_{\infty}}}\sum_{\e \in\E(\G)}  \left( \frac{1}{n}\int_{0}^{\ell_\e}|[u_n]_\e'|^{p_\e} \right)^{\frac{p_\e -1}{p_\e}}$$
where $p_{\infty}=\max\{p_\e:\e\in \E(\G)\}$, and $M_1$ depends on $\varphi$.
Now, taking $\varphi = u_n$ in \eqref{Weak1rep01NN}, and taking into account Proposition~\ref{laacotLinfdeleliptico001}  and that $u_nv_n\ge 0$, we get
\begin{equation}\label{theobveest001}\frac{1}{n}\sum_{\e \in\E(\G)}\int_{0}^{\ell_\e}|[u_n]_\e'|^{p_\e}   = \int_\G (g - v_n) u_n \,dx    \leq  \int_\G g u_n   \,dx .\end{equation}

From \eqref{masclaro} and~Proposition~\ref{laacotLinfdeleliptico001}, we have    $v_n\ll v_n+\lambda n(g-v_n)$ for all $\lambda>0$; in particular, for $\lambda=\frac{1}{n}$, we get 
\begin{equation}\label{theobveest001otra02}v_n\ll g,
\end{equation}
and therefore $\Vert v_n\Vert_\infty\le \Vert g\Vert_\infty$ (see Proposition~\ref{inq1BC}). Hence, from~\eqref{theobveest001},
\begin{equation}\label{theobveest001otra01}\frac{1}{n}\sum_{\e \in\E(\G)}\int_{0}^{\ell_\e}|[u_n]_\e'|^{p_\e}       \leq M_2.\end{equation}

Then, 
$$\begin{array}{c}
\displaystyle\sum_{\e \in\E(\G)}  \left( \frac{1}{n} \int_{0}^{\ell_\e}|[u_n]_\e'|^{p_\e} \right)^{\frac{p_\e -1}{p_\e}} \leq \sharp\left\{\e\in E(\G) :  \int_{0}^{\ell_\e}|[u_n]_\e'|^{p_\e} \leq 1 \right\} + \frac{1}{n}\sum_{\e \in\E(\G)}\int_{0}^{\ell_\e}|[u_n]_\e'|^{p_\e}\\ \\
\leq \sharp(E(\G)) +M_2.
\end{array}$$
And therefore, 
$$\left\vert \int_\G (g - v_n) \varphi \, \,dx \right\vert \leq   \frac{M_3}{n^{1/p_{\infty}}},$$
where $M_3$ depends on $\varphi$.
Consequently,
\begin{equation}\label{limweak1}
\lim_{n \to \infty}  \int_\G (g - v_n) \varphi \, \,dx =0 \quad \forall \, \varphi   \in \mathcal{W}^{1,\overline{p}}(\G).
\end{equation}

 From \eqref{theobveest001otra02} and Proposition~\ref{inq1BC}, we have that there exists a subsequence $\{v_{n_k}\}_k$   weakly convergent in $L^1(\G)$, so, by~\eqref{limweak1}, it converges to $g$. Therefore, since $v_{n_k}\ll g$, applying again  Proposition~\ref{inq1BC}, we have that 
 $$v_{n_k}\to g\quad\hbox{in } L^{1}(\G).$$
Consequently, we have shown that $L^\infty(\G)\times\{0\}\subset \overline{D(\mathcal{A})},$ and therefore \eqref{qdomaindensity01} holds.\end{proof}

  We will use the following notation:  for $1\le q\le \infty$, $$L^q_{loc}([0,+\infty);Y)=\{h:(0,+\infty)\to Y:h\in L^q(0,T;Y)  \ \forall T>0\};$$  
  we will also use these other notations:
  $$L^{\overline{p'}}_{loc}([0,+\infty);  L^{\overline{p'}}(\G))
=\{h:(0,+\infty)\to G: [h]_\e\in L^{p_\e'}(0,T;L^{p_\e'}(0,\ell_\e))\ \forall T>0 \   \&\ \forall \e\},$$
and, for $T>0$,
$$L^{\overline{p}}((0,T)\times\G)=\{h\in L^1(0,T;L^{\overline{p}}(\G)): [h]_\e\in L^{p_\e}(0,T;L^{p_\e}(0,\ell_\e)) \ \forall \e\}$$
  $$L^{\overline{p}}(0,T;  \mathcal{W}^{1,\overline{p}}(\G))=\{h\in L^{1}(0,T;  \mathcal{W}^{1,\overline{p}}(\G)): [h]_\e\in L^{p_\e}(0,T;W^{1,p_\e}(0,\ell_\e))\ \forall \e\}.$$

By Theorem \ref{Dominio}, applying Theorem \ref{EUm-accretive}, we have the following result. 

 \begin{theorem}\label{Existencemild}  Let  $\G$ be a   connected and compact metric graph.
  Let $1 < p_\e < \infty$ be and   $\gamma_\e:\R\to \R$ be a continuous and increasing function with $\gamma_\e(\mathbb{R})=\mathbb{R}$ and $\gamma_\e(0)=0$ for all $\e \in E(\G)$.  Let $f\in L^1_{loc}([0,+\infty);L^{1}(\G))$  and $\omega\in L^1_{loc}([0,+\infty);L^1(\V(\G)))$ be. Then, for every initial data $v_0 \in L^1(\G)$, 
there exists a unique mild solution of Problem~\eqref{ACPI}  on $[0,T]$,    for any $T>0$.
 \end{theorem}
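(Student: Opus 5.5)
The statement is a direct consequence of the Crandall--Liggett Theorem (Theorem~\ref{EUm-accretive}) applied to the operator $\mathcal{A}$ in the Banach space $X=L^{1}(\G)\times L^1(\V(\G))$. By Theorem~\ref{Dominio}, $\mathcal{A}$ is $m$-T-accretive, in particular $m$-accretive, and $\overline{D(\mathcal{A})}=L^{1}(\G)\times\{0\}$. The plan is therefore to check that the data of Problem~\eqref{ACPI} fit the hypotheses of Theorem~\ref{EUm-accretive} on every bounded interval $[0,T]$, and then to handle uniqueness and consistency in $T$.

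First I fix $T>0$. Since $f\in L^1_{loc}([0,+\infty);L^{1}(\G))$ and $\omega\in L^1_{loc}([0,+\infty);L^1(\V(\G)))$, the right-hand side $F(t):=(f(t),\omega(t))$ belongs to $L^1(0,T;X)$. This holds because the norm on $X$ is the sum of the $L^1(\G)$-norm and the $1$-norm on $\R^m$, and measurability of $F$ follows componentwise. Next, for any $v_0\in L^1(\G)$, the initial datum $W(0)=(v_0,0)$ lies in $L^{1}(\G)\times\{0\}=\overline{D(\mathcal{A})}$ by \eqref{qdomaindensity01}. Theorem~\ref{EUm-accretive} then gives a unique mild solution $W\in C([0,T];X)$ of \eqref{ACPI} on $[0,T]$, in the sense of Definition~\ref{def:mild}.

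For uniqueness, I can argue either via the uniqueness part of the Crandall--Liggett Theorem or directly from the contraction estimate of Theorem~\ref{ContPrinTT}. Applying that estimate to two mild solutions with the same data, and to the negative parts as well, gives $\|W(t)-\widehat W(t)\|_X\le 0$. It remains to check that the solutions for different $T$ are compatible. If $T_1<T_2$, the restriction to $[0,T_1]$ of the mild solution on $[0,T_2]$ is again a mild solution on $[0,T_1]$: one restricts the partitions in Definition~\ref{def:mild}, adding $T_1$ as a partition point if necessary. Uniqueness then forces agreement, so we get a well-defined solution on $[0,+\infty)$.

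I do not expect a real obstacle, since all the substantial work (solvability of the resolvent equation via Theorem~\ref{ExitUniq1} and the density of the domain) is already contained in Theorem~\ref{Dominio}. The only point requiring a line of care is the restriction argument for partitions when comparing different $T$. Alternatively, one can avoid it by appealing to uniqueness of integral solutions (Theorem~\ref{mildInt}), since that notion is clearly local in time.
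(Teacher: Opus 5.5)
Your proposal is correct and follows the paper's argument exactly. The paper proves this statement in one line, combining Theorem~\ref{Dominio} ($m$-T-accretivity of $\mathcal{A}$ and $\overline{D(\mathcal{A})}=L^{1}(\G)\times\{0\}$) with the Crandall--Liggett Theorem~\ref{EUm-accretive}; your extra checks (that $(f,\omega)\in L^1(0,T;X)$, that $(v_0,0)\in\overline{D(\mathcal{A})}$, and that solutions for different $T$ agree, which is best handled through the time-local notion of integral solution as you suggest) are routine details the paper leaves implicit.
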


 Our next aim is to characterize this mild solution as a weak solution.
 
\begin{definition}  Let $f\in L^1_{loc}([0,+\infty);L^{1}(\G))$  and $\omega\in L^1_{loc}([0,+\infty);L^1(\V(\G)))$ be. We say that $v:[0,+\infty)\to L^1(\G)$ is a weak solution  of Problem~\eqref{evpbintro1711} if, for any $T>0$, $v \in C([0,T]; L^1(\G))$,  $v(0) = v_0$ and  there exists  $u \in L^{\overline{p}}(0,T;  \mathcal{W}^{1,\overline{p}}(\G))$ with $v(t) = \overline{\gamma}(u(t))$ for all $t \in [0,T]$ satisfying
\begin{equation}\label{Def1} \begin{array}{c} - \displaystyle\int_0^T \int_\G v(t) \frac{\partial}{\partial t} \psi(t)\,dx\, dt +  \sum_{\e \in\E(\G)}\int_0^T  \int_{0}^{\ell_\e}|[u(t)]_\e'|^{p_\e-2}[u(t)]_\e' [\psi(t)]_\e'  \,dx\, dt \\ \\
 \displaystyle = \int_0^T \int_{\G} f(t) \psi(t)\,dx\, dt+\sum_{\v\in \V(\G)}\int_0^T\omega_\v(t)\psi(t,\v) \, dt, \end{array}
\end{equation}
for all $\psi \in W^{1,1}(0,T; \mathcal{W}^{1,{1}}(\G)) \cap L^{\overline{p}}(0,T; \mathcal{W}^{1,\overline{p}}(\G))$ with $\psi(0) = \psi (T) =0$.
\end{definition}
\begin{remark}\rm 
A weak solution also satisfies
\begin{equation}\label{Def2}\begin{array}{c}\displaystyle  \frac{d}{dt}  \int_\G v(t) \varphi\,dx  +    \sum_{\e \in\E(\G)}\int_{0}^{\ell_\e}|[u(t)]_\e'|^{p_\e-2}[u(t)]_\e' [\varphi]_\e'\,dx  \\ \\   \displaystyle = \int_{\G} f(t) \varphi\,dx+\sum_{\v\in \V(\G)}\omega_\v(t)\varphi(\v) \quad \hbox{in} \ \ \mathcal{D}^\prime(]0,T[),
\end{array}
\end{equation}
for all $\varphi \in  \mathcal{W}^{1,\overline{p}}(\G)$.
In fact, we can replace~\eqref{Def1} by~\eqref{Def2} in the definition of weak solution by the density of the functions of the form  $\psi := \varphi \xi$, with  $\varphi \in \mathcal{W}^{1,\overline{p}}(\G)$ and $\xi \in \mathcal{D}(]0, T[)$, in $ L^{\overline{p}}(0,T; \mathcal{W}^{1,\overline{p}}(\G))$.  \hfill $\blacksquare$
\end{remark}

\begin{theorem} \label{uniqweak001}
Let  $\G$ be a  connected and compact metric graph.
  Let $1 < p_\e < \infty$ be and   $\gamma_\e:\R\to \R$ be a continuous and increasing function with $\gamma_\e(\mathbb{R})=\mathbb{R}$ and $\gamma_\e(0)=0$ for all $\e \in E(\G)$.  Let $v_0 \in L^1(\G)$, $f\in L^1_{loc}([0,+\infty);L^{1}(\G))$  and $\omega\in L^1_{loc}([0,+\infty);L^1(\V(\G)))$ be. 
If $v$ is a  weak solution of Problem~\eqref{evpbintro1711}, then $W:= (v,0)$ is an integral solution of Problem~\eqref{ACPI}. 
\end{theorem}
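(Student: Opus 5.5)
We will verify Definition~\ref{intSol} directly in $X=L^1(\G)\times L^1(\V(\G))$ with the norm $\Vert (v,\eta)\Vert=\Vert v\Vert_{L^1(\G)}+\sum_{\v}|\eta_\v|$. Fix $((\hat v,0),(\widetilde v,\hat\omega))\in\mathcal{A}$, with $\hat v=\overline{\gamma}(\hat u)$ and $\hat u$ satisfying \eqref{Weak1rep01}. Since the second component of both $W(t)=(v(t),0)$ and $(\hat v,0)$ is zero, the bracket reduces to
$$[W(t)-(\hat v,0),(f(t)-\widetilde v,\omega(t)-\hat\omega)]=\int_{\G}\operatorname{sign}_0(v-\hat v)(f-\widetilde v)+\int_{\{v=\hat v\}}|f-\widetilde v|+\sum_{\v}|\omega_\v(t)-\hat\omega_\v|,$$
and it suffices to prove, in $\mathcal D'(]0,T[)$,
$$\frac{d}{dt}\int_\G|v(t)-\hat v|\le \int_\G \operatorname{sign}_0(u(t)-\hat u)(f(t)-\widetilde v)+\sum_{\v}|\omega_\v(t)-\hat\omega_\v|,$$
noting that $\operatorname{sign}_0(u-\hat u)=\operatorname{sign}_0(v-\hat v)$ off $\{v=\hat v\}$ by strict monotonicity of each $\gamma_\e$, and that on $\{v=\hat v\}$ the term is dominated by $|f-\widetilde v|$.

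To obtain this, I will use \eqref{Def2} for $v$ and \eqref{Weak1rep01} for $\hat u$ with the test function $\varphi=T_k(u(t)-\hat u)/k$ (a Lipschitz truncation, which belongs to $\mathcal W^{1,\overline p}(\G)$ since $u(t),\hat u$ are continuous at the vertices). Subtracting, the gradient terms give $\frac1k\sum_\e\int_{\{|u-\hat u|<k\}}(|u'|^{p_\e-2}u'-|\hat u'|^{p_\e-2}\hat u')(u'-\hat u')\ge 0$, which we drop; the vertex terms give $\sum_\v(\omega_\v-\hat\omega_\v)T_k(u-\hat u)(\v)/k\le\sum_\v|\omega_\v-\hat\omega_\v|$; and the source term tends to $\int\operatorname{sign}_0(u-\hat u)(f-\widetilde v)$ as $k\to0$. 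What remains is the time-derivative term $\int_0^T\langle\partial_t v,T_k(u-\hat u)/k\rangle\,dt$ (with a cutoff $\xi\in\mathcal D(]0,T[)$, $\xi\ge0$), which must be shown to equal $-\int_0^T\xi'(t)\int_\G j_k(v(t))\,dt$ with $j_k(r)=\int_{\hat v}^{r}T_k(\gamma^{-1}(s)-\hat u)/k\,ds$ edgewise, and then $j_k(v)\to|v-\hat v|$ as $k\to0$.

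This chain-rule identity is the main obstacle, since $v$ only has $\partial_t v$ in the dual sense and $\psi$ must be an admissible test function in $W^{1,1}(0,T;\mathcal W^{1,1}(\G))$. I will handle it as in \cite{AndIgMazTo06} by Steklov averages in time: test \eqref{Def1} with $\psi=\xi\,T_k(u-\hat u)/k$ regularized by time-averaging $\frac1h\int_t^{t+h}$, use the convexity inequality $(v(t+h)-v(t))\,\beta(u(t))\le J(v(t+h))-J(v(t))$ valid for $\beta$ nondecreasing composed with $\gamma^{-1}$ and $J$ its primitive, pass $h\to0$ using $v\in C([0,T];L^1(\G))$ and $u\in L^{\overline p}(0,T;\mathcal W^{1,\overline p}(\G))$, obtaining the inequality (which is all we need) in $\mathcal D'$. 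Then letting $k\to0$ by dominated convergence yields the integral inequality \eqref{E1IntSol}, and $W(0)=(v_0,0)$ with continuity gives that $W$ is an integral solution.
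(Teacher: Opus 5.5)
Your proposal is essentially the paper's proof: the same test function $\frac1k T_k(u-\hat u)\xi$ regularized by a forward Steklov average in time, the monotone gradient term dropped, the vertex term bounded by $\sum_{\v}|\omega_\v-\hat\omega_\v|$, the source term passed to $\operatorname{sign}_0$, a convexity inequality for the time term, and finally $k\to0$.

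One orientation slip needs fixing. With $\psi_h=\frac1h\int_t^{t+h}(\cdots)\,ds$ the time term equals $\frac1h\int_0^T\!\int_\G (v(t-h)-v(t))\,\beta(u(t))\,\xi(t)$, so the inequality you need is $(v(t-h)-v(t))\,\beta(u(t))\le J(v(t-h))-J(v(t))$, which is the one the paper uses. The inequality you display, with $v(t+h)$ and $\beta(u(t))$, does not bound this term; it corresponds to a backward difference quotient and yields the reverse estimate.
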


\begin{proof}   Firstly observe that the bracket in  $X =L^{1}(\G)\times L^1(\V(\G))$ is given by 
$$\begin{array}{c}\displaystyle[(u_1,w_1),(u_2,w_2)] = \int_\G u_2 \, \hbox{sign}_0 ( u_1) \,  dx + \int_{\{x\in \G:u_1(x) =0 \}} \vert u_2(x) \vert \, dx \\ \\
\displaystyle+ \sum_{\v \in V(\G)} w_2(\v) \,  \hbox{sign}_0 ( w_1(\v))  + \sum_{\v:w_1(\v) =0}  \vert w_2(\v) \vert.
\end{array}$$ 

 Let $v$ be a weak solution of Problem~\eqref{evpbintro1711}.
To prove that $W:= (v,0)$ is an integral solution of Problem~\eqref{ACPI},  we must  show that, for any $(\tilde{v},\tilde{w}) \in \mathcal{A}(\tilde{z}, 0)$, 
\begin{equation}\label{E2IntWeakneeds01}   \begin{array}{c} 
\displaystyle \frac{d}{dt} \int_\G \vert v(t) - \tilde{z} \vert \leq \int_\G (f(t)- \tilde{v}) \, \hbox{sign}_0 ( v(t)- \tilde{z}) \,  dx     \qquad \\ \\
\displaystyle \qquad + \int_{\{x\in\G:v(t,x) = \tilde{z}(x) \}} \vert  f(t,x)- \tilde{v} \vert \, dx
+ \sum_{\v \in V(\G)) }  \vert \omega_\v(t)  - \tilde{w}_\v\vert
\end{array}
\end{equation}
in $\mathcal{D}^{\prime}(]0,T[)$.

Since  $v$ is a weak solution of Problem~\eqref{evpbintro1711}, there exists $u \in L^{\overline{p}}(0,T;  \mathcal{W}^{1,\overline{p}}(\G))$ with $v(t) = \overline{\gamma}(u(t))$ for all $t \in [0,T]$ satisfying 
\begin{equation}\label{Def1proof01} \begin{array}{c} - \displaystyle\int_0^T \int_\G v(t) \frac{\partial}{\partial t} \psi(t)\,dx\, dt +  \sum_{\e \in\E(\G)}\int_0^T  \int_{0}^{\ell_\e}|[u(t)]_\e'|^{p_\e-2}[u(t)]_\e' [\psi(t)]_\e'  \,dx\, dt \\ \\
 \displaystyle = \int_0^T \int_{\G} f(t) \psi(t)\,dx\, dt+\sum_{\v\in \V(\G)}\int_0^T\omega_\v(t)\psi(t,\v) \, dt, \end{array}
\end{equation}
for all $\psi \in W^{1,1}(0,T; \mathcal{W}^{1,1}(\G)) \cap L^{\overline{p}}(0,T; \mathcal{W}^{1,\overline{p}}(\G))$ with $\psi(0) = \psi (T) =0$.
On the other hand, if $(\tilde{v},\tilde{w}) \in \mathcal{A}(\tilde{z}, 0),$ there exist $\tilde{u} \in \mathcal{W}^{1,\overline{p}}(\G)$, $ \tilde z = \overline{\gamma}(\tilde{u})$ in  $\G,$
such that
\begin{equation}\label{Weak1rep01N} \sum_{\e \in\E(\G)}\int_{0}^{\ell_\e}|[\tilde{u}]_\e'|^{p_\e-2}[\tilde{u}]_\e' [\varphi]_\e' \, \,dx  = \int_\G  \widetilde v \varphi \, \,dx +   \sum_{\v\in \V(\G)} \tilde{w}_\v\varphi_\v\quad \forall \, \varphi \in \mathcal{W}^{1,\overline{p}}(\G). \end{equation}
 Therefore,
\begin{equation}\label{Def1proof02} \begin{array}{c}   \displaystyle - \underbrace{\int_0^T \int_\G v(t) \frac{\partial}{\partial t} \psi(t)\,dx\, dt}_{(A)} \\ \\
\displaystyle
+  \underbrace{\sum_{\e \in\E(\G)}\int_0^T  \int_{0}^{\ell_\e}\Big(|[u(t)]_\e'|^{p_\e-2}[u(t)]_\e'-|[\tilde{u}]_\e'|^{p_\e-2}[\tilde{u}]_\e'   \Big)[\psi(t)]_\e'  \,dx\, dt}_{(B)} \\ \\
 \displaystyle = \underbrace{\int_0^T \int_{\G} \big(f(t)-\tilde v\big) \psi(t)\,dx\, dt}_{(C)}+\underbrace{\sum_{\v\in \V(\G)}\int_0^T\big(\omega_\v(t)-\tilde \omega_\v)\psi(t,\v) \, dt}_{(D)}, \end{array}
\end{equation}
for all $\psi \in W^{1,1}(0,T; \mathcal{W}^{1,1}(\G)) \cap L^{\overline{p}}(0,T; \mathcal{W}^{1,\overline{p}}(\G))$ with $\psi(0) = \psi (T) =0$.
Then for $\xi\in  \mathcal{D}^{\prime}(]0,T[)$, $\xi\ge 0$, we can take $\displaystyle\psi_\tau(x,t)=\frac{1}{\tau}\int_t^{t+\tau}\frac{1}{k}T_k(u(s,x)-\tilde u(x))\xi(s)\, ds$,
with  $\tau>0 $  and small, as test function in~\eqref{Def1proof02}.
Let us now study term by term the  equation we obtain with such test function.

  Respect to (A), we have
$$\int_\G v(t) \frac{\partial}{\partial t} \psi_\tau(t)\,dx\, dt = \int_0^T \int_\G v(t) \frac{1}{k}  \frac{T_k(u(t+\tau)-\tilde u)\xi(t+\tau)-T_k(u(t)-\tilde u)\xi(t)}{\tau}\,dx\, dt$$ $$=\int_0^T \int_\G \frac{v(t-\tau)-v(t)}{\tau}  \frac{1}{k} T_k(u(t)-\tilde u) \,dx\,\xi(t)\, dt.$$
Now, since for every $\e \in E(\G)$, the function 
$$r \mapsto   \int_0^r T_k((\gamma_\e)^{-1}(s)- [\tilde u]_\e)) ds $$
is non-decreasing, we have
$$  T_k((\gamma_\e)^{-1}(r)- [\tilde u]_\e)) \in \partial \left(   \int_0^r T_k((\gamma_\e)^{-1}(s)- [\tilde u]_\e)) ds \right).$$
Moreover,
$$ T_k([u(t)]_\e(x)-[\tilde u]_\e(x)) =    T_k((\gamma_\e)^{-1}([v(t)]_\e(x)])-[\tilde u]_\e(x)).$$
Therefore,
$$\begin{array}{c}
\displaystyle([v(t - \tau) ]_\e(x) - [v(t)]_\e(x) ) \frac{1}{k} T_k([u(t)]_\e(x)-[\tilde u]_\e(x)) \\
\\ \displaystyle\leq \int_{[v(t)]_\e(x) }^{([v (t - \tau)]_\e(x)} \frac{1}{k}  T_k((\gamma_\e)^{-1}(s)- [\tilde u]_\e(x)) ds.
\end{array}$$
Hence,
$$\begin{array}{c}
\displaystyle\int_0^T \int_\G \frac{v(t-\tau)-v(t)}{\tau}  \frac{1}{k} T_k(u(t)-\tilde u) \,dx\, \xi(t)\, dt 
\\ \\ \displaystyle
\leq\sum_{e \in E(\G)} \int_0^T  \int_0^{\ell_\e}\frac{1}{\tau}\int_{[v(t)]_\e(x) }^{([v(t - \tau)]_\e(x) }  \frac{1}{k}  T_k((\gamma_\e)^{-1}(s)- [\tilde u]_\e(x))) ds  \, dx \,  \xi(t)\, dt
\\ \\ \displaystyle
= \sum_{e \in E(\G)}\int_0^T \int_0^{\ell_\e} \int_{[\tilde z]_\e(x)}^{[v(t)]_\e (x)}\frac{1}{k}  T_k((\gamma_\e)^{-1}(s)- [\tilde u]_\e(x))) ds \, dx \,  \frac{\xi(t+\tau) - \xi(t)}{\tau}\,dt.
\end{array}
$$
So, 
$$
\begin{array}{c}
\displaystyle
\limsup_{\tau\to 0} \int_\G v(t) \frac{\partial}{\partial t} \psi_\tau(t)\,dx\, dt 
\\ \\ \displaystyle
\leq  \sum_{e \in E(\G)}\int_0^T\int_0^{\ell_\e} \int_{[\tilde z]_\e(x)}^{[v(t)]_\e (x)}\frac{1}{k}  T_k((\gamma_\e)^{-1}(s)- [\tilde u]_\e(x))) ds \, dx \,   \frac{d}{dt}\xi(t)\,dt.
\end{array}
$$
Now 
$$
\begin{array}{c}\displaystyle
\limsup_{k\to 0}  \int_{[\tilde z]_\e(x)}^{[v(t)]_\e (x)}\frac{1}{k}  T_k((\gamma_\e)^{-1}(s)- [\tilde u]_\e(x))) ds \le \int_{[\tilde z]_\e(x)}^{[v(t)]_\e (x)}  \hbox{sign}_0 ((\gamma_\e)^{-1}(s)- [\tilde u]_\e(x)) ds
\\ \\ \displaystyle
=  \int_{[\tilde z]_\e(x)}^{[v(t)]_\e (x)}  \hbox{sign}_0 ((\gamma_\e)^{-1}(s)- (\gamma_\e)^{-1}([\tilde z]_\e(x))) ds = \vert [v(t)]_\e (x) -  [\tilde z]_\e(x) \vert.
\end{array}
$$
Consequently
\begin{equation}\label{e1A}
 \displaystyle 
   \displaystyle \limsup_{k\to 0}\limsup_{\tau\to 0}\int_0^T \int_\G v(t) \frac{\partial}{\partial t} \psi_\tau(t)\,dx\, dt \leq  \int_0^T \int_\G  \vert v(t) - \tilde{z} \vert \,  \,dx\, \frac{d}{dt}\xi(t)\,dt.
\end{equation}
For (B), we have
$$ \begin{array}{ll}
  \displaystyle\limsup_{\tau\to 0}  \sum_{\e \in\E(\G)}\int_0^T  \int_{0}^{\ell_\e}\Big(|[u(t)]_\e'|^{p_\e-2}[u(t)]_\e'-|[\tilde{u}]_\e'|^{p_\e-2}[\tilde{u}]_\e' [\varphi]_\e' \Big)[\psi_\tau(t)]_\e'  \,dx\, dt\\ \\ 
    \displaystyle=\sum_{\e \in\E(\G)}\int_0^T  \int_{0}^{\ell_\e}\Big(|[u(t)]_\e'|^{p_\e-2}[u(t)]_\e'-|[\tilde{u}]_\e'|^{p_\e-2}[\tilde{u}]_\e'   \Big)\Big(\frac{1}{k}T_k([u(t)]_\e-[\tilde{u}]_\e)\Big)'\,dx\,\xi(t)\,dt\ge 0.
  \end{array}
$$
Hence
\begin{equation}\label{e1B}
\limsup_{k\to 0}\limsup_{\tau\to 0}\sum_{\e \in\E(\G)}\int_0^T  \int_{0}^{\ell_\e}\Big(|[u(t)]_\e'|^{p_\e-2}[u(t)]_\e'-|[\tilde{u}]_\e'|^{p_\e-2}[\tilde{u}]_\e' [\varphi]_\e' \Big)[\psi_\tau(t)]_\e'  \,dx\, dt \geq 0.
\end{equation}
In the case of (C), we have
\begin{equation}\label{e1C}\begin{array}{c}  
  \displaystyle\limsup_{k\to 0}\limsup_{\tau\to 0}\int_0^T \int_{\G} \big(f(t)-\tilde v\big) \psi_\tau(t)\,dx\, dt\\ 
  \\  \displaystyle=
  \limsup_{k\to 0}\int_0^T \int_{\G} \big(f(t)-\tilde v\big) \frac{1}{k}T_k(u(t,x)-\tilde u(x))\,dx\, \xi(t)\,dt\\ \\
     \displaystyle \le \int_0^T \int_\G (f(t)- \tilde{v}) \, \hbox{sign}_0 ( u(t)- \tilde u) \,  dx \,  \xi(t)\,dt \qquad\qquad \\ \\ 
      \displaystyle \qquad\qquad + \int_0^T\int_{\{x\in\G:u(t,x) = \tilde u(x) \}} \vert  f(t,x)- \tilde{v} \vert \, dx\, \xi(t)\,dt
   \\ \\
     \displaystyle = \int_0^T \int_\G (f(t)- \tilde{v}) \, \hbox{sign}_0 ( v(t)- \tilde z) \,  dx \,  \xi(t)\,dt  \qquad\qquad \\ \\ 
      \displaystyle \qquad\qquad + \int_0^T\int_{\{x\in\G:v(t,x) = \tilde z(x) \}} \vert  f(t,x)- \tilde{v} \vert \, dx\, \xi(t)\,dt.
  \end{array}
 \end{equation}
 And finally, respect to (D)
  \begin{equation}\label{e1D}\begin{array}{c} 
  \displaystyle    \limsup_{\tau\to 0} \sum_{\v\in \V(\G)}\int_0^T\big(\omega_\v(t)-\tilde \omega_\v\big)\psi_\tau(t,\v) \, dt, 
  \\ \\ \displaystyle=  
   \sum_{\v\in \V(\G)}\int_0^T\big(\omega_\v(t)-\tilde \omega_\v\big) \frac{1}{k}T_k(u(t,\v)-\tilde u(\v))\, \xi(t)\,dt\\ \\
    \displaystyle \le \int_0^T \sum_{ \v\in \V(\G)  } \vert   \omega_\v(t)- \tilde{\omega}_\v\vert\, \xi(t)\,dt.
  \end{array}
\end{equation}

  Then, as consequence of \eqref{e1A},  \eqref{e1B}  \eqref{e1C}  and  \eqref{e1D} we get~\eqref{E2IntWeakneeds01}. 
  \end{proof}
 
 For $\beta:\mathbb{R}\to \mathbb{R}$ increasing,
$$j_\beta(r) := \int_0^r \beta(s) \, ds$$  defines a convex lower semi-continuous function such that $$\beta = \partial j_\gamma,$$ the subdifferential of $j_\beta$. Let $j^*_\beta$ be the Legendre transform of $j_\beta$, then $$\beta^{-1} = \partial j^*_\beta.$$
  
In the next result we use the following notation: for $u \in L^{1}(\G)$, $j^*_{\overline{\gamma}}(u)$ is defined as 
 $$[j^*_{\overline{\gamma}}(u)]_\e = j^*_{\gamma_\e}([u]_\e) \quad \hbox{for all} \ \e \in E(\G). $$

\begin{theorem}\label{ExtUniqWea} Let  $\G$ be a  connected and compact metric graph. For each $\e\in \E(\G)$, let $1 < p_\e < \infty$ be and   $\gamma_\e:\R\to \R$ be a continuous and increasing function with $\gamma_\e(\mathbb{R})=\mathbb{R}$ and $\gamma_\e(0)=0$.   Let $f\in L^{\overline{p'}}_{loc}([0,+\infty);L^{\overline{p'}}(\G))$  and   $\omega\in L^{\infty}_{loc}([0,+\infty);L^{1}(\V(\G)))$  be.
  For every initial data 
$v_0 \in L^{1}(\G)$ 
  with $\displaystyle \int_\G j^*_{\overline{\gamma}}(v_{0})<+\infty$, there exists a unique weak solution of Problem~\eqref{evpbintro1711}. 
  
 Moreover, if, for  $i=1,2$,  $f_i  \in L^{\overline{p'}}_{loc}([0,+\infty);L^{\overline{p'}}(\G))$, $\omega_i\in L^{\overline{p'}}_{loc}([0,+\infty);L^{1}(\V(\G)))$,  and $v_i$
  is weak solution for initial data $v_{i,0}\in L^{1}(\G)$  
  with $\displaystyle \int_\G j^*_{\overline{\gamma}}(v_{i,0})<+\infty$, then, for any $t>0$:
$$
\begin{array}{c}
  \displaystyle\int_\G(v_1(t) - v_2(t))^+  \leq \int_\G (v_{1,0}  - v_{2,0})^+
    + \int_0^t \int_\G(f_1(s) - f_2(s))^+\,dxds
   \\ \\
  \displaystyle 
    +\sum_{\v\in\V(\G)}\int_0^t(\omega_1(s,\v)-\omega_2(s,\v))^+\,ds.
\end{array}    
$$
\end{theorem}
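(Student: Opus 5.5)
The plan is to follow the strategy of \cite{AndIgMazTo06,AndIgMazTo07}. We show that the mild solution given by Theorem~\ref{Existencemild} is a weak solution. Uniqueness and the contraction estimate then come from Theorem~\ref{uniqweak001} combined with Theorem~\ref{mildInt}: every weak solution $v$ yields an integral solution $(v,0)$ of \eqref{ACPI}, hence the unique mild solution. The $L^1$ comparison inequality then follows from Theorem~\ref{ContPrinTT}, since $\mathcal{A}$ is $m$-T-accretive (Theorem~\ref{Dominio}). The positive part of the bracket in $X$ produces exactly the three terms in $f$, $\omega$ and the initial data.

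For existence, fix $T>0$ and take the Crandall--Liggett approximations. For a partition $0=t_0<\dots<t_N=T$ with step $\tau$, let $f_i,\omega_i$ be the time averages of $f$ and $\omega$ on $(t_{i-1},t_i]$. Define $v_i=\overline\gamma(u_i)$ as the weak solution (Theorem~\ref{ExitUniq1}) of $(P_{\tau\omega_i}^{v_{i-1}+\tau f_i})$. Let $v_\tau,u_\tau$ be the associated step functions. By construction $v_\tau\to v$ in $C([0,T];L^1(\G))$, where $v$ is the mild solution. The weak formulation reads
\[
\int_\G \frac{v_i-v_{i-1}}{\tau}\varphi+\sum_\e\int_0^{\ell_\e}|[u_i]_\e'|^{p_\e-2}[u_i]_\e'[\varphi]_\e'=\int_\G f_i\varphi+\sum_\v\omega_i(\v)\varphi(\v).
\]

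The key a priori estimate comes from testing with $\varphi=u_i$ and using convexity of $j^*_{\overline\gamma}$. Since $u_i\in\partial j^*_{\overline\gamma}(v_i)$, we have $(v_i-v_{i-1})u_i\ge j^*(v_i)-j^*(v_{i-1})$. Summing over $i$ gives
\[
\int_\G j^*_{\overline\gamma}(v_\tau(t))+\int_0^t\sum_\e\int|[u_\tau]_\e'|^{p_\e}\le \int_\G j^*_{\overline\gamma}(v_0)+\int_0^t\int_\G f_\tau u_\tau+\int_0^t\sum_\v|\omega_\tau(\v)||u_\tau(\v)|.
\]
The right-hand terms must be absorbed. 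First, the $L^1$-norm of $v_\tau$ is bounded uniformly by T-accretivity. Lemma~\ref{Poinc}, applied edge by edge and combined with continuity at vertices, then gives $\|u_\tau\|_{L^{\overline p}}\le C(\|u_\tau'\|_{L^{\overline p}}+1)$. Also $|u(\v)|\le C\|u\|_{W^{1,\overline p}}$ by the Sobolev embedding (Theorem~\ref{compact}). Hence, by Young's inequality with the exponents $p_\e,p_\e'$ (this is where $f\in L^{\overline{p'}}$ and $\omega\in L^\infty$ in time are used), both terms are bounded by $\tfrac12\int|u_\tau'|^{\overline p}+C$. This yields a bound for $u_\tau$ in $L^{\overline p}(0,T;\mathcal W^{1,\overline p}(\G))$ uniform in $\tau$. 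We expect this absorption step, especially the vertex flux term and the non-homogeneous exponents $p_\e$, to be the main technical obstacle.

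Next we pass to the limit. Up to a subsequence, $u_\tau\rightharpoonup u$ weakly in $L^{\overline p}(0,T;\mathcal W^{1,\overline p})$, which preserves continuity at vertices. The fluxes converge weakly: $|[u_\tau]_\e'|^{p_\e-2}[u_\tau]_\e'\rightharpoonup\Phi_\e$ in $L^{p_\e'}$. Strong convergence $v_\tau\to v$ together with maximal monotonicity of $\overline\gamma$ gives $v=\overline\gamma(u)$ a.e. Passing to the limit in the discrete equation (in the form \eqref{Def2}, after a discrete integration by parts in time) gives \eqref{Def1} with $\Phi$ in place of the $p$-Laplacian flux. It remains to identify $\Phi_\e=|[u]_\e'|^{p_\e-2}[u]_\e'$ by Minty's trick. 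For this we need
\[
\limsup\int_0^T\sum_\e\int|[u_\tau]_\e'|^{p_\e}\le\int_0^T\sum_\e\int\Phi_\e[u]_\e'.
\]
The upper bound comes from the energy identity above: use lower semicontinuity of $\int j^*(v(T))$, and on the limit side test the limiting equation with $u$ through the chain rule $\frac{d}{dt}\int j^*(v)=\langle v_t,u\rangle$, justified by a Steklov time-regularization as in the proof of Theorem~\ref{uniqweak001}. Finally, $v\in C([0,T];L^1)$ holds because $v$ is the mild solution.
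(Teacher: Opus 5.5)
Your proposal is correct and follows the paper's proof essentially step by step: the implicit time discretization, the energy estimate from testing with $u_i$ and using convexity of $j^*_{\overline\gamma}$, the absorption via Lemma~\ref{Poinc}, a trace bound and Young's inequality, weak limits with $v=\overline\gamma(u)$, the Steklov-averaged chain rule for the $\limsup$ inequality, Minty's trick, and uniqueness and contraction through Theorems~\ref{uniqweak001}, \ref{mildInt} and~\ref{ContPrinTT}. One small slip: the implicit step is $v_i-\tau\Delta^{\G,\omega_i}_{\overline p}u_i=v_{i-1}+\tau f_i$, i.e. the resolvent $(I+\tau\mathcal{A})^{-1}$, which exists by the $m$-accretivity in Theorem~\ref{Dominio}; it is not the problem $(P^{v_{i-1}+\tau f_i}_{\tau\omega_i})$ of Theorem~\ref{ExitUniq1}, which lacks the factor $\tau$ on the diffusion term, although the discrete weak formulation you display is the correct one.
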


\begin{proof} 
  By Theorems \ref{EUm-accretive} and~\ref{Dominio}, we only need to prove that the mild solution obtained in Theorem \ref{Existencemild}  is a weak solution  of Problem~\eqref{evpbintro1711},  the rest is consequence of the properties of   mild solutions and Theorem~\ref{uniqweak001} (see Theorems~\ref{EUm-accretive} and~\ref{ContPrinTT}).

 Let $T >0$ and $W = (v,0)$ be the mild solution in $[0,T]$, with $v(0) = v_0$, obtained in  Theorem \ref{Existencemild}. Our aim is to prove that $v$ is a weak solution  of problem \eqref{evpbintro1711}. 
 
   For $n \in \N$,  consider a subdivision $t_0^n = 0 < t_1^n < \cdots < t_{n-1}^n < t_n^n = T$ with $t_i^n - t_{i-1}^n = \frac{T}{n}$. Given $\epsilon >0$ there exists $n\in \mathbb{N}$ such that $\frac{T}{n}\le\epsilon$, and there exist $f_1^n, \ldots,f_n^n \in L^{\overline{p'}}(\G)$, $\omega_1^n, \ldots, \omega_n^n \in L^{1}(\V(\G))$  such that
\begin{equation}\label{dataprox001}
\sum_{i=1}^n \int_{t_{i-1}^n}^{t_i^n} \left( \sum_{\e \in\E(\G)}\int_{0}^{\ell_\e} \vert [f(t)]_\e - [f_i^n]_\e \vert^{p_\e'}  \right)dt\leq \epsilon,   
\end{equation}
\begin{equation}\label{dataprox001foromega}
\sum_{i=1}^n \operatorname*{ess\,sup}_{t\in[t_{i-1}^n,t_i^n]} \Vert \omega(t) - \omega_i^n \Vert_{L^{1}(\V(\G))} \leq \epsilon,
\end{equation}
and
\begin{equation}\label{maildWeak1}
v(t) = \lim_{n \to +\infty} v_n(t) \quad \hbox{in $L^1(\G)$, uniformly for }  t \in [0,T],
\end{equation}
where  $v_n(t)$ is given  by
\begin{equation}\label{maildWeak2}
\begin{array}{l}
 v_n(0) = v_0, \\ \\
v_n(t) = v_i^n  \quad \hbox{for} \ t \in ]t_{i-1}^n, t_1^n], \quad i= 1, \ldots, n,
\end{array}
\end{equation}
with $(v_i^n,0)$ being {\it the solution} of
$$\frac{(v_i^n,0) - (v_{i-1}^n,0)}{T/n} + \mathcal{A}(v_i^n,0)\ni (f_i^n,\omega_i^n).$$
 
For such solution $(v_i^n,0)$, there exist $u_i^n  \in \mathcal{W}^{1,\overline{p}}(\G))$ with $v_i^n  = \overline{\gamma} (u_i^n)$, $i= 1, \ldots, n$, satisfying  
\begin{equation}\label{maildWeak3} 
\sum_{\e \in\E(\G)}\int_{0}^{\ell_\e}|[u_i^n]_\e'|^{p_\e-2}[u_i^n]_\e' [\varphi]_\e' \, d x  + \int_\G  \frac{v_i^n - v_{i-1}^n}{T/n} \varphi\, d x  = \int_{\G} f_i^n \varphi\,dx+ \sum_{\v\in \V(\G)}\omega_i^n(\v)\varphi(\v), 
\end{equation}
 for all   $\varphi \in \mathcal{W}^{1,\overline{p}}(\G)$.
Taking $\varphi = u_i^n$  
we get 
\begin{equation}\label{maildWeak3N} 
\sum_{\e \in\E(\G)}\int_{0}^{\ell_\e}|[u_i^n]_\e'|^{p_\e}   + \int_\G  \frac{v_i^n - v_{i-1}^n}{T/n} u_i^n\,  d x  = \int_{\G} f_i^n u_i^n\, dx + \sum_{\v\in \V(\G)}\omega_i^n(\v) u_i^n(\v). 
\end{equation}

Since  $[u_i^n]_\e = \gamma_\e^{-1}([v_i^n]_\e) = \partial j^*_{\gamma_\e}([v_i^n]_\e)$,
$$j^*_{\gamma_\e}([v_{i-1}^n]_\e) - j^*_{\gamma_\e}([v_{i}^n]_\e) \geq ([v_{i-1}^n]_\e -[v_{i}^n]_\e) [u_i^n]_\e,$$
we have
$$\int_\G  \frac{j^*_{\overline{\gamma}}(v_{i}^n) - j^*_{\overline{\gamma}}(v_{i-1}^n)}{T/n}    \leq  \int_\G  \frac{v_i^n - v_{i-1}^n}{T/n} u_i^n\, d x.$$
Therefore,   by \eqref{maildWeak3N}, we get
$$\sum_{\e \in\E(\G)}\int_{0}^{\ell_\e}|[u_i^n]_\e'|^{p_\e}   +  \int_\G  \frac{j^*_{\overline{\gamma}}(v_{i}^n) - j^*_{\overline{\gamma}}(v_{i-1}^n)}{T/n}     \leq  \int_{\G} f_i^n u_i^n\,dx+ \sum_{\v\in \V(\G)}\omega_i^n(\v) u_i^n(\v). $$
 Then, integrating in time, we obtain
 \begin{equation}\label{maildWeak4} \begin{array}{ll}
\displaystyle\sum_{i=1}^n \displaystyle \int_{t_{i-1}^n}^{t_i^n} \sum_{\e \in\E(\G)}\int_{0}^{\ell_\e}|[u_i^n]_\e'|^{p_\e}    + \int_\G  (j^*_{\overline{\gamma}}(v_{i}^n) - j^*_{\overline{\gamma}}(v_{i-1}^n))    \\
\\ \displaystyle
\leq  \sum_{i=1}^n \displaystyle \int_{t_{i-1}^n}^{t_i^n} \left( \int_{\G} f_i^n u_i^n \,dx+ \sum_{\v\in \V(\G)}\omega_i^n(\v) u_i^n(\v) \right)\end{array}
 \end{equation}
 Consequently, if we set
$$ u_n(t) = u_i^n, \ f_n(t) =f_i^n, \  \omega_n(t) = \omega_i^n \quad \hbox{for} \ t \in ]t_{i-1}, t_1], \quad i= 1, \ldots, n,$$
it follows that  
   \begin{equation}\label{maildWeak4NN}\begin{array}{ll}
\displaystyle \int_0^T \left(\sum_{\e \in\E(\G)}\int_{0}^{\ell_\e}| u_n(t)'|^{p_\e}   \right)dt  + \int_\G  (j^*_{\overline{\gamma}}(v_{n}^n) - j^*_{\overline{\gamma}}(v_{0}))  \, d x  \\ \\ \leq \displaystyle  \int_0^T  \int_{\G} f_n (t) u_n(t) + \int_0^T \sum_{\v\in \V(\G)}  \omega_n(t,\v) u_n(t,\v). \end{array}
 \end{equation}

From~\eqref{maildWeak4NN}, using that $j^*_{\gamma_\e}\ge 0$, we have 
\begin{equation}\label{1245261}
\begin{array}{c}
\displaystyle
\int_0^T \left( \sum_{\e \in\E(\G)}\int_{0}^{\ell_\e}|[u_n(t)]_\e'|^{p_\e}  \right)  dt \\
\\
\displaystyle\leq \int_\G j^*_{\overline{\gamma}}(v_{0})   + \displaystyle  \int_0^T  \int_{\G} f_n (t) u_n(t) \,dxdt + \int_0^T \sum_{\v\in \V(\G)}  \omega_n(t,\v) u_n(t,\v)\, dt.
\\
\\
\displaystyle\leq \int_\G j^*_{\overline{\gamma}}(v_{0}) + \int_0^T   \left(\sum_{\e \in\E(\G)}\left(\int_{0}^{\ell_\e}\vert [f_n(t)]_\e\vert^{p_\e'}\right)^{1/{p_\e'}}  \left(\int_{0}^{\ell_\e}\vert [u_n(t)]_\e\right)^{1/{p_\e}} \right)  dt 
\\
\\
\displaystyle   
 + \operatorname*{ess\,sup}_{t\in[0,T]}
   \Vert \omega_n(t)\Vert_{L^\infty(\V(\G))}   \int_0^T  \Vert u_n(t)  \Vert_{L^1(\V(\G))}\,  dt  .
\end{array}
\end{equation}

 By \eqref{maildWeak1}, there exists a constant $C_1 >0$ such that
\begin{equation}\label{maildWeak61}
\sup_{t \in [0,T]}\int_\G \vert v_n(t) \vert \leq C_1\quad \forall n\in \N.
\end{equation} 
Then, applying Lemma \ref{Poinc} in each edge, we have that there exists $C_2>0$ (not depending on $\e$, since there is a finite quantity of edges) such that
\begin{equation}\label{maildWeak62}
\left(\int_{0}^{\ell_\e}\vert [u_n(t)]_\e\vert^{p_\e} \right)^{1/{p_\e}} \leq C_2\left( \left( \int_{0}^{\ell_\e}\vert [u_n(t)]_\e'\vert^{p_\e}\right)^{1/{p_\e}} +1 \right),
\end{equation}
  for all $n \in \N$, $t \in [0,T]$ and $\e \in E(\G)$. 
On the other hand, by the Trace Theorem (applied to each segment, and adding the corresponding inequalities), there is a constant $C_3 >0$     such that
\begin{equation}
 \Vert u_n(t)  \Vert_{L^1(\V(\G))}  \leq C_3\left(\sum_{\e \in\E(\G)}  \left(\int_{0}^{\ell_\e}|[u_n(t)]_\e|^{p_\e}\right)^{1/p_\e}  + \sum_{\e \in\E(\G)}  \left( \int_{0}^{\ell_\e}|[u_n(t)]_\e'|^{p_\e} \right)^{1/p_\e}   \right),
\end{equation}
  for all $n \in \N$, $t \in [0,T]$. 
And, on account of~\eqref{maildWeak62},  there exists a constant $C_4>0$ such that
\begin{equation}\label{maildWeak63}
 \Vert u_n(t)  \Vert_{L^1(\V(\G))}  \leq C_4\left(   \sum_{\e \in\E(\G)}  \left( \int_{0}^{\ell_\e}|[u_n(t)]_\e'|^{p_\e} \right)^{1/p_\e} +1  \right) \quad \forall n \in \N, \, t \in [0,T]. 
\end{equation}

From~\eqref{1245261}, \eqref{maildWeak62} and~\eqref{maildWeak63}, we have that  
\begin{equation}\label{124526102}
\begin{array}{c}
\displaystyle
\int_0^T \left( \sum_{\e \in\E(\G)}\int_{0}^{\ell_\e}|[u_n(t)]_\e'|^{p_\e}  \right)  dt \le \int_\G j^*_{\overline{\gamma}}(v_{0})
\\ \\
\displaystyle   + C_2\int_0^T   \left(\sum_{\e \in\E(\G)}\left(\int_{0}^{\ell_\e}\vert [f_n(t)]_\e\vert^{p_\e'}\right)^{1/{p_\e'}}   \left( \int_{0}^{\ell_\e}\vert [u_n(t)]_\e'\vert^{p_\e}\right)^{1/{p_\e}} +1  \right)  dt 
\\
\displaystyle  
 + C_4 \operatorname*{ess\,sup}_{t\in[0,T]}
   \Vert \omega_n(t)\Vert_{L^\infty(\V(\G))}   \int_0^T   \left(   \sum_{\e \in\E(\G)}  \left( \int_{0}^{\ell_\e}|[u_n(t)]_\e'|^{p_\e} \right)^{1/p_\e} +1  \right)   dt  .
\end{array}
\end{equation}
 Therefore, using Young's inequality, and taking into  account theintegrability of the data, there exists a constant $C_5>0$ such  that
\begin{equation}\label{Young1}
\displaystyle \int_0^T \left( \sum_{\e \in\E(\G)}\int_{0}^{\ell_\e}|[u_n(t)]_\e'|^{p_\e}\right)dt \le C_5  \quad \forall \, n \in \N. 
\end{equation}

 Consequently, from \eqref{Young1} and \eqref{maildWeak62}, 
  a constant $C_6 >0$ such that
 \begin{equation}\label{maildWeakNM}
  \int_0^T \left(\sum_{\e \in\E(\G)} \int_{0}^{\ell_\e}\Big(|[u_n(t)]_\e|^{p_\e} +|[u_n(t)]_\e'|^{p_\e}\Big) \right)dt   \leq C_6  \quad\forall n\in \N.
\end{equation}
Thus, by this estimation , there exists a subsequence, denoted again $\{ u_n \}$, such that  
\begin{equation}\label{maildWeak7}
u_n \to u \quad \hbox{weakly in }    L^{\overline{p}}(0, T; \mathcal{W}^{1,\overline{p}}(\G)) \quad \hbox{as }   n \to +\infty, 
\end{equation}

By \eqref{maildWeak1},
 we have
$$v_n \to v \in L^1((0,T) \times \G).$$
Then,  since $v_n = \overline{\gamma}(u_n)$, having in mind   the above convergence and   \eqref{maildWeak7},  by~\cite[Lemma~G]{BCS88}, we get that $$v = \overline{\gamma}(u).$$

We denote $\a_n$ the function in $(0,T) \times \G$, such that
$$[\a_n(t)]_e = \vert [u_n(t)^{\prime}]_\e \vert^{p_\e-2}  [u_n(t)^{\prime}]_\e.$$
Since $\{ u'_n \}$ is bounded in $L^{\overline {p}}((0,T) \times \G)$, it follows  that $\{ \a_n \}$ is bounded in $L^{\overline {p^{\prime}}}((0,T) \times \G)$, so we can assume that
$$\a_n\to \Phi \quad \hbox{weakly in} \ \ L^{\overline {p^{\prime}}}((0,T) \times \G)) \ \hbox{as} \ n \to +\infty.$$
From \eqref{maildWeak3}, given  $\psi \in W^{1,1}(0,T; \mathcal{W}^{1,1}(\G)) \cap L^p(0,T; \mathcal{W}^{1,\overline{p}}(\G))$ with $\psi(0) = \psi (T) =0$, we have  (define $v_n(t)=v_0$ for $t<0$):
\begin{equation}\label{maildWeak9}
\begin{array}{c}
\displaystyle \int_0^T  \sum_{\e \in\E(\G)}\int_{0}^{\ell_\e}|[u_n(t)]_\e'|^{p_\e-2}[u_n(t)]_\e'  [\psi(t)]_\e'\, d x dt+ \int_\G \int_0^T \frac{v_n(t) - v_n(t- T/n)}{T/n} \psi(t) \, dxdt\\ \\=  \displaystyle\int_0^T \int_{\G} f_n (t) \psi(t)\, dxdt+  \int_0^T \sum_{\v\in \V(\G)}\omega_n(t,\v)\psi(t,\v)dt.
\end{array}
\end{equation}
Now
$$\begin{array}{c}
\displaystyle\lim_{n \to +\infty} \int_\G \int_0^T \frac{v_n(t) - v_n(t- T/n) } {T/n} \psi(t)\, dxdt \\ \\
\displaystyle= \lim_{n \to +\infty} \Big(-  \int_\G  \int_0^{T-T/n} v_n (t) \frac{\psi(t+T/n) - \psi(t)}{T/n}\,dxdt\qquad\qquad
\\ \\
\displaystyle \qquad\qquad
+  \int_\G  \int_{T-T/n}^T \frac{v_n (t) \psi(t)}{T/n}\, dxdt - \int_\G  \int_{0}^{T/n} \frac{v_0 \psi(t)}{T/n}\, dxdt\Big)
\\ \\
\displaystyle= - \int_0^T \int_\G v(t) \frac{\partial \psi}{\partial t}\, dxdt. 
\end{array}
$$
Therefore, taking limits in \eqref{maildWeak9} as $n \to +\infty$, we obtain
\begin{equation}\label{maildWeak10}
\int_0^T \int_\G \Phi(t) \psi'(t) - \int_0^T \int_\G v(t) \frac{\partial \psi}{\partial t} = \displaystyle\int_0^T \int_{\G} f(t) \psi(t)+  \int_0^T \sum_{\v\in \V(\G)}\omega(t,\v)\psi(t,\v)
\end{equation}

Thus,  to finish   the proof we only need show that
\begin{equation}\label{maildWeakF}[\Phi(t)]_\e =  |[u(t)]_\e'|^{p_\e-2}[u(t)]_\e' [\psi(t]_\e'\quad \forall \, \e \in\E(\G) .\end{equation}
To do that,  let us first prove the following inequality,
\begin{equation}\label{maildWeak11}
\limsup_{n\to+\infty} \int_0^T \sum_{\e \in\E(\G)}\int_{0}^{\ell_\e}| [u_n(t)]_\e'|^{p_\e} \, \d x dt \leq \int_0^T \int_\G \Phi(t) u(t)'.
\end{equation}
 Thanks to \eqref{maildWeak4NN} and Fatou's Lemma, we have
  \begin{equation}\label{maildWeak12}\begin{array}{c}
\displaystyle\limsup_{n\to+\infty} \displaystyle\int_0^T \sum_{\e \in\E(\G)}\int_{0}^{\ell_\e}| [u_n(t)]_\e'|^{p_\e} \, \d x dt \\ \\ \leq -  \displaystyle \int_\G  (j^*_{\overline{\gamma}}(v(T)) - j^*_{\overline{\gamma}}(v_{0}))  \, d x +\displaystyle  \int_0^T  \int_{\G} f (t) u(t) + \int_0^T \sum_{\v\in \V(\G)}  \omega(t,\v) u(t,\v). \end{array}
 \end{equation}
 On the other hand \eqref{maildWeak10} can be rewritten as
 \begin{equation}\label{maildWeak13}
 \int_0^T \int_\G v(t) \frac{\partial \psi}{\partial t} = \int_0^T (F(t),\psi(t)),
\end{equation}
 where $F$ is given by
$$(F(t),\psi(t)) =  \int_\G \Phi(t) \psi'(t) - \displaystyle    \int_{\G} f (t) \psi(t) - \sum_{\v\in \V(\G)}  \omega(t,\v) \psi(t,\v) $$
Let us see now that 
  \begin{equation}\label{maildWeak14}
- \frac{d}{dt} \int_G j^*_{\overline{\gamma}}(v(t)) = (F,u) \quad \hbox{in}  \ \ \mathcal{D}'(]0,T[).
  \end{equation}
  Indeed, given $0 \leq \varphi \in \mathcal{D}(]0,T[$ and $\tau >0$ small, we define
  $$\eta_\tau(t):= \frac{1}{\tau} \int_t^{t + \tau}  u(s) \varphi(s) ds.$$
  Taking $\eta_\tau$ as test function in \eqref{maildWeak13}, we get
$$\int_0^T (F(t),\eta_\tau(t)) =  \int_0^T \int_\G v(t) \frac{\partial \eta_\tau}{\partial t} = \int_0^T \int_\G v(t) \varphi^{\prime}(t)$$ $$= \int_0^T \int_\G v(t) \frac{u(t + \tau) \varphi(t + \tau)- u(t) \varphi(t)}{\tau}  = \int_0^T \int_\G \frac{v(t-\tau) - v(t)}{\tau} u(t) \varphi(t).$$
Now, 
$$u(t) = \overline{\gamma }^{-1}(v(t)) = \partial j^*_{\overline{\gamma}}(v(t)),$$
hence
$$u(t)(v(t-\tau) - v(t)) \leq j^*_{\overline{\gamma}}(v(t-\tau)) - j^*_{\overline{\gamma}}(v(t)).$$
Thus, we obtain that
$$\int_0^T (F(t),\eta_\tau(t)) \leq  \frac{1}{\tau}\int_0^T j^*_{\overline{\gamma}}(v(t-\tau)) - j^*_{\overline{\gamma}}(v(t)) \varphi(t)  =  \int_0^T j^*_{\overline{\gamma}}(v(t)) \frac{\varphi(t+\tau) - \varphi(t)}{\tau}.$$
Then, taking limits as $\tau \to 0^+$, we get
$$\int_0^T (F(t),u(t)) \leq  \int_0^T j^*_{\overline{\gamma}}(v(t)) \varphi^{\prime}(t).$$
Taking now
$$\overline{\eta}_\tau(t):= \frac{1}{\tau} \int_t^{t + \tau} u(s - \tau) \varphi(s) ds,$$
 and arguing  as above we get the another inequality, and we conclude the proof of \eqref{maildWeak14}.
  
Integrating from $0$ to $T$ in \eqref{maildWeak14}, we have 
 $$ \int_0^T\int_\G \Phi(t) u'(t) =-\int_\G  (j^*_{\overline{\gamma}}(v(T)) - j^*_{\overline{\gamma}}(v_{0}))  \, d x  +  \displaystyle  \int_0^T  \int_{\G} f (t) \psi(t) + \int_0^T \sum_{\v\in \V(\G)}  \omega(t,\v) \psi(t,\v).$$
 Hence, using \eqref{maildWeak10} we obtain \eqref{maildWeak11}.

 Finally, to prove \eqref{maildWeakF} we apply the Minty-Browder's method. Indeed, for $\rho \in  L^{\overline{p}}(0,T; \mathcal{W}^{1,\overline{p}}(\G))$, we have  
 $$
 \begin{array}{c}
 \displaystyle
 \int_0^T  \sum_{\e \in\E(\G)}\int_{0}^{\ell_\e} [\rho(t)]_\e' \vert^{p_\e -2} [\rho(t)]_\e' ([u_n(t)]_\e' - [\rho(t)]_\e') \,dxdt
 \\ \\ \displaystyle
 \leq  \int_0^T \sum_{\e \in\E(\G)}\int_{0}^{\ell_\e} \vert [u_n(t)]_\e' \vert^{p_\e -2} [u_n(t)]_\e' ([u_n(t)]_\e' - [\rho(t)]_\e')\, dxdt,
 \end{array}
 $$
 so that, passing to the limit and using \eqref{maildWeak11}, we get  
 $$ \int_0^T  \sum_{\e \in\E(\G)}\int_{0}^{\ell_\e} [\rho(t)]_\e' \vert^{p_\e -2} [\rho(t)]_\e' ([u(t)]_\e' - [\rho(t)]_\e') \,dxdt  \\ \\
 \leq \int_0^T \int_\G \Phi(t)(u(t)' - \rho(t)')\, dxdt.
 $$
 Then, taking $\rho =  u \pm \lambda \xi$ for $\lambda >0$ and $\xi \in  L^p(0,T; \mathcal{W}^{1,\overline{p}}(\G))$, we get  
 $$  
 \int_0^T  \sum_{\e \in\E(\G)}\int_{0}^{\ell_\e}\vert ([u(t)]_\e+\lambda [\xi(t)]_\e)' \vert^{p_\e -2}([u(t)]_\e+ \lambda [\xi(t)]_\e)' [\xi(t)]_\e'\, dxdt \ge  \int_0^T \int_\G \Phi(t) \xi(t)'\,dxdt, $$
 and
 $$  
 \int_0^T  \sum_{\e \in\E(\G)}\int_{0}^{\ell_\e}\vert ([u(t)]_\e-\lambda [\xi(t)]_\e)' \vert^{p_\e -2}([u(t)]_\e- \lambda [\xi(t)]_\e)' [\xi(t)]_\e'\, dxdt \le  \int_0^T \int_\G \Phi(t) \xi(t)'\,dxdt, $$
 so that, letting $\lambda \to 0$, we obtain  
  $$  \int_0^T  \sum_{\e \in\E(\G)}\int_{0}^{\ell_\e} \vert [u(t)]_\e'\vert^{p_\e -2}[u(t)]_\e' [\xi(t)]_\e'\,dxdt= \int_0^T \int_\G \Phi(t) \xi(t)' \,dxdt$$
  for all $\xi \in  L^p(0,T; \mathcal{W}^{1,\overline{p}}(\G)),$
 which implies that \eqref{maildWeakF} holds.
     \end{proof}

{\flushleft \bf Acknowledgements.} The authors would like to thank Delio Mugnolo for some discussion on this issue.   The  authors  have been partially supported  by  Grant PID2022-136589NB-I00 funded by MCIN/AEI/10.13039/501100011033 and FEDER and by Grant RED2022-134784-T funded by MCIN/AEI/10.13039/501100011033.

For the purpose of open access, the authors have applied a CC BY public copyright license to any Author Accepted Manuscript version arising from this submission.

{\bf Data Availability.} Data sharing is not applicable to this article as no datasets were generated or
analyzed during the current study.

{\bf Conflict of interest.} The authors have no conflict of interest to declare that are relevant to the content of this
article.

\newcommand{\etalchar}[1]{$^{#1}$}

\end{document}